\title{The Canonical 2-Gerbe of a Holomorphic Vector Bundle}
\author{Markus Upmeier}
\address{Universit\"atsstrasse 14, 86159 Augsburg, Germany}
\email{markus.upmeier@math.uni-augsburg.de}
\thanks{The author was partially funded by the Deutsche Forschungsgemeinschaft (DFG, German Research Foundation) -- UP 85/2-1, UP 85/3-1.}
\newtheorem{theorem}{Theorem}
\newtheorem{lemma}[theorem]{Lemma}
\newtheorem{corollary}[theorem]{Corollary}
\newtheorem{proposition}[theorem]{Proposition}
\theoremstyle{definition}
\newtheorem{definition}[theorem]{Definition}
\theoremstyle{remark}
\newtheorem{remark}[theorem]{Remark}
\def\N{\mathbb{N}}
\def\Z{\mathbb{Z}}
\def\R{\mathbb{R}}
\def\C{\mathbb{C}}
\DeclareMathOperator\incl{incl}
\DeclareMathOperator\id{id}
\DeclareMathOperator\pr{pr}
\def\can{{\mathrm{can}}}
\DeclareMathOperator\im{im}
\DeclareMathOperator\spec{spec}
\def\GL{\mathrm{GL}}
\def\O{{\mathcal{O}}}
\def\scrG{{\mathcal{G}}}
\begin{document}

\maketitle

\begin{abstract}
For each holomorphic vector bundle we construct a holomorphic bundle 2-gerbe that geometrically represents its second Beilinson--Chern class. Applied to the cotangent bundle, this may be regarded as a higher analogue of the canonical line bundle in complex geometry. Moreover, we exhibit the precise relationship between holomorphic and smooth gerbes. For example, we introduce an Atiyah class for gerbes and prove a Koszul--Malgrange type theorem.
\end{abstract}

\section{Introduction}

A fundamental object associated to complex manifolds $X$ is its canonical line bundle $\Lambda^\mathrm{top} (T^*X)$. The present paper deals with an extension of this concept. Recall that the canonical bundle may be viewed as a representative of the first Chern class. In Theorem~\ref{thm:Canonical2Gerbe} we construct a geometric representative of the second Be{\u\i}linson--Chern class, a refinement introduced in \cite{MR760999} that takes the holomorphic structure into account. This is a $2$-gerbe and may be computed from the eigenvalues and eigenspaces of chart transition functions, just like the canonical bundle depends only on the product of these eigenvalues.

We are motivated by the relationship between $2$-gerbes and line bundles on the space of curves, via transgression \cite{MR2174418,MR1667679,MR2362847,MR2980505}. According to the paradigm of string geometry, structure on the infinite-dimensional loop space may be studied through higher-categorical, finite-dimensional structure on the original space. This is the viewpoint of the present paper. 
The $2$-gerbe will be constructed within the framework of bundle gerbes, introduced in \cite{MR1405064} and further studied  in \cite{MR1794295,MR2681698,MR2032513}. It is obtained by pulling back a `tautological' \emph{multiplicative} $1$-gerbe on $\GL(n,\C)$.
Having applications to complex geometry in mind, we shall be particularly concerned with the holomorphic structure on our gerbe. Even though the definition of our gerbe on $\GL(n,\C)$ is similar to that of $\mathrm{U}(n)$ in \cite{MR2463811}, this forces us to apply different arguments based on zero-counting integrals.

To set this work apart from related results, we now briefly review the existing literature. In \cite{MR1388845,MR1291698,MR2362847} tautological gerbes are defined as Dixmier--Douady sheaves of groupoids, but the constructions involve infinite-dimensional spaces. This makes them less useful from the point of view of string geometry. We shall construct instead a very explicit and finite-dimensional model. Moreover, we have \emph{smooth} gerbes over compact, simple, simply-connected Lie groups \cite{MR2026898}, certain quotients therefore \cite{MR2078218}, for $\mathrm{SU}(n)$ \cite{MR1945806}. Of course, $\GL(n,\C)$ is of none of these types. For compact semi-simple Lie groups $G$, a smooth multiplicative structure on the tautological gerbe on $G$ was constructed by cohomological arguments in \cite{MR2174418} and \cite{MR2610397}.

We now state our main results and give an overview of the present paper.

We begin in Section~\ref{sec:HolomorphicGerbes} by developing further the theory of holomorphic bundle gerbes started in \cite[Section~7]{MR1977885}. In~\ref{ssec:relation-smooth} we study the relationship to smooth gerbes and present an analogue of the Koszul--Malgrange Theorem~\cite{MR0131882}. For holomorphic line bundles, the Atiyah class is the obstruction against holomorphic connections. In \ref{ssec:atiyah} we demonstrate how this idea extends to gerbes. The rest of the section reviews well-known terminology for smooth gerbes \cite{MR1405064,MR1794295,MR2681698,MR2318389} in our holomorphic context.

Our first theorem gives a very explicit and finite-dimensional construction of a holomorphic gerbe $\scrG^\can$. It extends the work \cite{MR2463811} for the smooth Lie group $\mathrm{U}(n)$. However, as $\GL(n,\C)$ is non-compact and in the holomorphic category we require different arguments based on zero-counting integrals. In Theorem~\ref{main-thm-1} we prove:

\begin{theorem}\label{thm:HolomorphicGerbe}
$\scrG^\can=(\pi\colon Y \rightarrow \GL(n,\C), L, m)$ defines a holomorphic gerbe. Its Dixmier--Douady class $\mathrm{DD}(\scrG^\can)$ is the generator of $H^2(\GL(n,\C), \O^*)$.
\end{theorem}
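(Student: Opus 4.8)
The proof naturally splits into two parts, matching the two assertions of the theorem. First I would verify that the data $(\pi\colon Y\to\GL(n,\C),L,m)$ genuinely constitutes a holomorphic bundle gerbe. This requires checking that $L$ is a holomorphic line bundle on the fiber product $Y^{[2]}=Y\times_{\GL(n,\C)}Y$, that the multiplication $m$ defines a holomorphic isomorphism $\pr_{12}^*L\otimes\pr_{23}^*L\xrightarrow{\sim}\pr_{13}^*L$ over $Y^{[3]}$, and that $m$ satisfies the associativity (coherence) constraint over $Y^{[4]}$. Since the construction is modeled on the eigenvalue/eigenspace decomposition of elements of $\GL(n,\C)$, I expect $Y$ to be (a union of) spaces of flags or spectral data, with $L$ built from determinant-type line bundles, so holomorphicity of each structure map should follow from the holomorphic dependence of eigenprojections on matrix entries away from the discriminant locus. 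The associativity of $m$ is typically a formal consequence of the functoriality of the determinant construction.

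The substantive part is the computation of the Dixmier--Douady class. The plan is to exploit the long exact sequence in sheaf cohomology and the known topology of $\GL(n,\C)$. Since $\GL(n,\C)$ deformation retracts onto $\mathrm{U}(n)$, one has $H^3(\GL(n,\C),\Z)\cong H^3(\mathrm{U}(n),\Z)\cong\Z$, and I would identify $H^2(\GL(n,\C),\O^*)$ via the exponential sequence $0\to\Z\to\O\to\O^*\to 0$. Because the underlying space is Stein-like in its cohomological behavior for the structure sheaf in the relevant degree, the connecting map should give an isomorphism (or near-isomorphism) $H^2(\GL(n,\C),\O^*)\cong H^3(\GL(n,\C),\Z)\cong\Z$, so it suffices to show that $\mathrm{DD}(\scrG^\can)$ maps to a generator under this comparison. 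I would reduce to a topological statement about the \emph{smooth} Dixmier--Douady class, where comparison with the construction in \cite{MR2463811} for $\mathrm{U}(n)$ is available.

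To pin down the generator concretely, I would pass from a two-dimensional accounting to a one-dimensional one by restricting to a suitably chosen complex curve (or a generating cycle) inside $\GL(n,\C)$ and evaluating the class there. This is where the phrase \emph{zero-counting integrals} enters: the holomorphic line bundle $L$ carries transition data governed by how eigenvalues collide and separate, and the Dixmier--Douady class is detected by integrating (equivalently, counting with signs) the zeros of the associated holomorphic sections or the winding of the transition cocycle around the discriminant. Concretely, I would construct an explicit holomorphic map from a Riemann surface probing the nontrivial generator of $\pi_3(\GL(n,\C))\otimes\C$, pull back the gerbe, and compute the resulting integer by a residue or argument-principle calculation, showing it equals $\pm 1$.

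The main obstacle I anticipate is precisely this last quantitative step: because $\GL(n,\C)$ is non-compact and we are working holomorphically rather than smoothly, the usual integral formulas for the Dixmier--Douady class (which rely on partitions of unity and Hermitian metrics producing compactly supported curvature) are not directly available, so the integer must be extracted by complex-analytic zero-counting rather than by integrating a globally defined curvature form. Ensuring that these zero-counts are well-defined, independent of the auxiliary choices (the probing surface, the trivializations), and correctly normalized to yield the generator is the delicate heart of the argument. The rest---holomorphicity, associativity, and the cohomological bookkeeping---I expect to be routine once the framework of Section~\ref{sec:HolomorphicGerbes} is in place.
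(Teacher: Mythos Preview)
Your plan has the right structural decomposition but misplaces the key technical ingredient and proposes a method for the Dixmier--Douady computation that cannot work.

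The zero-counting integrals do not enter in the computation of the Dixmier--Douady class; they enter in the first part, to establish that $L$ is a holomorphic line bundle at all. The issue is not the discriminant locus (the construction works perfectly well when eigenvalues collide): the fiber $L_{A,x,y}$ is built from $\Lambda^{\mathrm{top}}V_S(A)$ for sectors $S$ bounded by the rays through $x$ and $y$, and the danger is that an eigenvalue of $A$ crosses such a ray, changing the rank of $V_S$. The argument principle
\[
\frac{1}{2\pi i}\oint_{\partial O}\frac{\chi_A'(\lambda)}{\chi_A(\lambda)}\,d\lambda
\]
is used to show that on a suitable neighborhood one may replace the moving sector $S[x,y]$ by a fixed domain $O$ with $V_{S[x,y]}(A)=V_O(A)$, after which a holomorphic functional-calculus projector $e_A$ exhibits $V_O$ as a holomorphic subbundle of the trivial bundle $\C^n$. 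This is the substantive analytic work; holomorphicity of $m$ and associativity are, as you say, essentially formal afterward.

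For the Dixmier--Douady class, your proposed reduction to a complex curve cannot succeed: a Riemann surface has real dimension $2$, so its $H^3$ vanishes and no residue or winding computation there can detect a degree-$3$ class (nor can it probe $\pi_3$). The paper does exactly what you mention in passing and then abandon: it checks by direct comparison of the definitions that the restriction of $\scrG^{\can}$ along $\incl\colon U(n)\hookrightarrow\GL(n,\C)$ coincides with the Murray--Stevenson basic gerbe $\scrG^{\mathrm{basic}}$, whence $\incl^*\mathrm{DD}^{\mathrm{top}}(\scrG^{\can})=\mathrm{DD}^{\mathrm{top}}(\scrG^{\mathrm{basic}})$ is the generator of $H^3(U(n);\Z)$; since $\incl^*$ is an isomorphism on $H^3(-;\Z)$ this pins down $\mathrm{DD}^{\mathrm{top}}(\scrG^{\can})$. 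The passage from $H^3(\GL(n,\C);\Z)$ to $H^2(\GL(n,\C),\O^*)$ is then handled, as you correctly anticipate, by the exponential sequence together with Cartan's Theorem~B for the Stein manifold $\GL(n,\C)$. No curvature integration or cycle-probing is required.
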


The complex Lie group $\GL(n,\C)$ is a Stein group. The techniques in Section~\ref{sec:CohomStein} for Stein manifolds allow us to show in Section~\ref{sec:MultiplicativeStructure} that the existence of a holomorphic multiplicative structure is a purely topological problem:

\begin{theorem}\label{theorem:MainMultiplicativity}
Let $G$ be a Stein group and let $\scrG$ be a holomorphic gerbe on $G$ with holomorphic connection. Then $\scrG$ admits a holomorphic multiplicative structure with connection precisely when the \emph{topological} Dixmier--Douady class $\mathrm{DD}(\scrG)\in H^3(G;\Z)$ is in the image of the transgression map $H^4(BG;\Z)\rightarrow H^3(G;\Z)$.
\end{theorem}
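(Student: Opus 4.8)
The plan is to translate the existence of a holomorphic multiplicative structure with connection into a descent problem over the simplicial manifold obtained from the nerve $N_\bullet G$, with $N_p G = G^p$, whose geometric realization is the classifying space $BG$. In the framework of Section~\ref{sec:HolomorphicGerbes}, a holomorphic gerbe with holomorphic connection on a complex manifold $M$ is a class in the hypercohomology $H^3\bigl(M,\mathcal D(3)\bigr)$ of the holomorphic Deligne complex $\mathcal D(3)=\bigl(\Z\to\O\xrightarrow{d}\Omega^1\xrightarrow{d}\Omega^2\bigr)$, and a multiplicative structure with connection on $\scrG$ is precisely a lift of the corresponding class to the total cohomology of the bar double complex $C^{p,q}=H^q(G^p,\mathcal D(3))$ restricting to $[\scrG]$ in the column $p=1$. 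I would organise the obstructions through the associated spectral sequence $E_1^{p,q}=H^q(G^p,\mathcal D(3))\Rightarrow H^{p+q}_{\mathrm{simp}}(N_\bullet G,\mathcal D(3))$, whose $d_1$ is the alternating sum of the coface maps: applied to $[\scrG]$ it encodes the product isomorphism $m^*\scrG\cong\pr_1^*\scrG\otimes\pr_2^*\scrG$ over $G^2$, while the higher differentials encode the associativity $2$-isomorphism over $G^3$ and its pentagon coherence over $G^4$.

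The decisive simplification comes from the Stein hypothesis. Finite products of Stein manifolds are Stein, so each $G^p$ is Stein, and the sheaves $\Omega^j$ of holomorphic forms are coherent; Cartan's Theorem~B then gives $H^q(G^p,\Omega^j)=0$ for all $q\geq 1$. Feeding this into the hypercohomology of $\mathcal D(3)$ on each $G^p$, only the topological piece and the global holomorphic forms survive, so the comparison map $H^3(G^p,\mathcal D(3))\to H^3(G^p;\Z)$ is governed entirely by coherent cohomology that vanishes in the relevant range. The hypothesis that $\scrG$ already carries a \emph{holomorphic} connection means $[\scrG]$ lifts $\mathrm{DD}(\scrG)$ to a Deligne class to begin with; combined with the vanishing of the form contributions, every stage of the descent can be upgraded from a purely topological solution to one carrying holomorphic connection data. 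Thus the Deligne-coefficient spectral sequence maps isomorphically, in the total degrees relevant to $H^4$, onto the integral bar spectral sequence $E_1^{p,q}=H^q(G^p;\Z)\Rightarrow H^{p+q}(BG;\Z)$, and the holomorphic-with-connection obstruction problem is identified with its purely topological shadow.

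It then remains to identify the topological condition. The class $\mathrm{DD}(\scrG)$ lives in the column $E_1^{1,3}=H^3(G;\Z)$, and the multiplicative structure exists if and only if $\mathrm{DD}(\scrG)$ is a permanent cycle, i.e.\ it lies in $E_\infty^{1,3}$: the $d_1$-condition is the multiplicativity of the class over $G^2$, and the vanishing of the higher differentials is exactly the solvability of the associativity and pentagon constraints over $G^3$ and $G^4$. By the standard description of transgressive elements, the edge map identifies $E_\infty^{1,3}$ with the image of the cohomology suspension (transgression) $H^4(BG;\Z)\to H^3(G;\Z)$ attached to the universal fibration $G\to EG\to BG$. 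This yields both implications at once and completes the reduction.

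The main obstacle I anticipate is the second step: keeping the holomorphic connection data under control through the higher differentials. One must verify not merely that the integral obstructions vanish, but that each coherence datum—the associativity $2$-isomorphism over $G^3$ and the pentagon over $G^4$—can be chosen \emph{holomorphically with connection}, which is precisely where $H^{\geq 1}(G^p,\Omega^j)=0$ must be invoked at every stage to trivialise the curving and connective ambiguities. Making the identification of the two spectral sequences precise in these top degrees, rather than merely comparing the classifying classes, is the technical crux.
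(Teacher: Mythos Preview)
Your strategy coincides with the paper's: encode the multiplicative structure as a lift of the Deligne class of $(\scrG,\nabla)$ from $H^3(G,\Z(2)_D)$ to the simplicial hypercohomology $H^4(BG_\bullet,\Z(2)_D)$, invoke Cartan's Theorem~B on the Stein manifolds $G^p$ to collapse the coherent part, and identify the remaining obstruction with the topological transgression. The paper packages this via the filtration $F^2$ of the bar double complex and its long exact sequence, while you phrase it as a permanent-cycle condition for $[\scrG]\in E_1^{1,3}$; these are equivalent.

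There is, however, a real gap at the point where you write ``combined with the vanishing of the form contributions.'' Cartan~B kills $H^{\geq 1}(G^p,\Omega^j)$, but it does \emph{not} kill global holomorphic forms, so the comparison map of spectral sequences is not an isomorphism on $E_1$ in the rows $q\leq 1$. What one actually needs is that the simplicial complex $\bigl(H^*(G^p,\Omega^{<2}),\delta\bigr)$ is acyclic in positive total degree; this is the content of the paper's Lemma~\ref{lem:forget-connect}. The nontrivial step there is the exactness of the row
\[
0\longrightarrow H^1(G;\C)\xrightarrow{\ \delta\ } H^1(G^2;\C)\xrightarrow{\ \delta\ } H^1(G^3;\C)\longrightarrow\cdots,
\]
which the paper verifies by hand after observing that $\mu^*\colon H^1(G)\to H^1(G)^{\oplus 2}$ is the diagonal (since $\pi_1(\mu)$ is addition on the abelian group $\pi_1(G)$). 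Without this computation your asserted isomorphism of the Deligne and integral spectral sequences in total degree $4$ does not follow, and the reduction to the topological transgression is incomplete.

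A minor point: the theorem concerns gerbes with holomorphic connection but \emph{no} curving, so the relevant coefficient complex is $\Z(2)_D=(\O^*\to\Omega^1)$ rather than your $\mathcal D(3)$, whose $\Omega^2$ term would encode a curving.
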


This shows that for Stein groups, the problem may be reduced to Waldorf's theory \cite{MR2610397}. In particular, every holomorphic gerbe with connection on $\GL(n,\C)$ admits a multiplicative structure (Corollary~\ref{GLNcorollary}). Following the proof, we also give a more explicit description. It is special to our treatment that we use Stein spaces to study the Be{\u\i}linson--Chern classes, instead of Deligne's theory of mixed Hodge structures \cite{MR0498552} used in \cite{MR760999,MR944991}.

Sections~\ref{ssec:2gerbe} and \ref{ssec:DD2gerbe} introduce a notion of {$2$-gerbe} which is slightly weaker than in \cite{MR2032513}. We also review the definition of the Dixmier--Douady class of a $2$-gerbe. In the holomorphic context, there is a subtle point with the choice of coverings we feel is not adequately considered in the existing literature.

For each holomorphic vector bundles $E \rightarrow X$ we present in Section~\ref{ssec:canonical2gerbe} a construction for a $2$-gerbe $\mathfrak{G}(E)$. We show:

\begin{theorem}\label{thm:Canonical2Gerbe}
The associated $2$-gerbe has the following properties:
\begin{enumerate}
\item
$\mathfrak{G}(f^*E) \cong f^*\mathfrak{G}(E)$ \textup(functorial\textup)
\item
The topological Dixmier--Douady class is $c_2(E)$.
\item
For $X$ algebraic, $\mathrm{DD}(\mathfrak{G}(E))=c_2^B(E)$ is the Be{\u\i}linson--Chern class of $E$.
\end{enumerate}
\end{theorem}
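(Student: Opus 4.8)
The plan is to realize $\mathfrak{G}(E)$ as the pullback of one universal object and to read off all three properties from the naturality of that construction together with the computation of $\mathrm{DD}(\scrG^\can)$ from Theorems~\ref{thm:HolomorphicGerbe} and~\ref{theorem:MainMultiplicativity}. Fixing a holomorphic cocycle $g_{\alpha\beta}\colon U_\alpha\cap U_\beta\to\GL(n,\C)$ for $E$, the $2$-gerbe $\mathfrak{G}(E)$ is assembled from the pulled-back gerbes $g_{\alpha\beta}^*\scrG^\can$ on double overlaps, with the multiplicative product $m$ supplying the gluing gerbe-morphisms over triple overlaps and its associator the coherence $2$-morphism over quadruple overlaps. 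Thus every piece of data depends on $E$ only through its classifying cocycle, which is exactly the input the three assertions require.

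For the functoriality statement~(1), a holomorphic map $f\colon X'\to X$ pulls the cover and cocycle of $E$ back to a cover and cocycle $(g_{\alpha\beta}\circ f)$ representing $f^*E$. Since each ingredient of the gluing is obtained by composing $g_{\alpha\beta}$ with a fixed map into $\GL(n,\C)$ and then pulling back $\scrG^\can$, precomposition with $f$ commutes with the entire construction, and the canonical isomorphism $\mathfrak{G}(f^*E)\cong f^*\mathfrak{G}(E)$ is the functoriality of gerbe pullback applied term by term. I would verify this directly on the defining cocycle.

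For the topological class~(2), the multiplicative structure lifts $\mathrm{DD}(\scrG^\can)$ to a class in $H^4(B\GL(n,\C);\Z)$ along the cohomology suspension $H^4(B\GL(n,\C);\Z)\to H^3(\GL(n,\C);\Z)$ of Theorem~\ref{theorem:MainMultiplicativity}, and $\mathfrak{G}(E)$ is the pullback of the corresponding universal $2$-gerbe along the classifying map of $E$. Using the homotopy equivalence $\GL(n,\C)\simeq\mathrm{U}(n)$ and the Serre spectral sequence, this suspension carries $c_2$ to the generator $\mathrm{DD}(\scrG^\can)$ of $H^3\cong\Z$ while killing the decomposable class $c_1^2$; hence \emph{a priori} $\mathrm{DD}(\mathfrak{G}(E))=c_2(E)+k\,c_1(E)^2$ for an integer $k$ determined by the multiplicative structure. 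To force $k=0$, I would restrict $\scrG^\can$ along the maximal torus $(\C^*)^n\hookrightarrow\GL(n,\C)$, where the explicit eigenvalue construction identifies the restricted class with the second elementary symmetric polynomial $e_2(y_1,\dots,y_n)$ in the standard generators $y_i\in H^2(B(\C^*)^n;\Z)$. Pulling back along a direct sum of line bundles $L_1,\dots,L_n$ then gives $\sum_{i<j}c_1(L_i)\smile c_1(L_j)=c_2$ with no $c_1^2$-term, so $k=0$ and $\mathrm{DD}(\mathfrak{G}(E))=c_2(E)$.

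The refined statement~(3) is the main obstacle, since the topological identity only determines the image under $H^3(X,\O^*)\to H^4(X;\Z)$ and a genuinely holomorphic argument is required to match $\mathrm{DD}(\mathfrak{G}(E))$ with Beilinson's class $c_2^B$ in Deligne cohomology. I plan to run the same splitting principle in the holomorphic setting: by functoriality~(1) and the naturality of $c_2^B$, pull back to the full flag bundle of $E$, where $E$ acquires a holomorphic filtration with line-bundle quotients $L_i$ and the flag-bundle pullback is injective on the relevant Deligne cohomology. There both classes must equal $\sum_{i<j}c_1^B(L_i)\cdot c_1^B(L_j)$, reducing the theorem to the case of diagonal transition functions, where $c_1^B(L)$ is the class of $L$ in $H^1(X,\O^*)$ and the identity becomes the multiplicativity of $\scrG^\can$ on the diagonal torus. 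This last comparison is local in nature, and here the Stein techniques of Section~\ref{sec:CohomStein} together with the zero-counting integrals of Theorem~\ref{thm:HolomorphicGerbe} should let me write down the Deligne cocycle of $\scrG^\can$ explicitly and identify it with Beilinson's, rather than invoking mixed Hodge theory.
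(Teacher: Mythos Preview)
Your treatment of (1) matches the paper's, which simply declares functoriality obvious.

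For (2) you are working harder than necessary. In the paper the multiplicative structure on $\scrG^\can$ is not derived but \emph{chosen}: right before the definition of $\mathfrak{G}(E)$ the author writes ``we equip this gerbe with the multiplicative structure with multiplicative class $c_2$''. Corollary~\ref{GLNcorollary} guarantees that a multiplicative structure with any prescribed class in the preimage of $\mathrm{DD}(\scrG^\can)$ exists, and the multiplicative class determines it up to isomorphism. With $\lambda(\scrG^\can)=c_2$ fixed by fiat, the topological Dixmier--Douady class of $\mathfrak{G}(E)$ is the pullback of $c_2$ along the classifying map, full stop; there is no undetermined $k\,c_1^2$ to kill. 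Your torus restriction would be the right move if the multiplicative structure had been constructed explicitly and its class needed to be computed, but that is not the situation here.

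For (3) your route is genuinely different and substantially harder. The paper invokes a single external fact: by \cite[Proposition~8.2]{MR944991}, the Be{\u\i}linson--Chern classes are \emph{characterized} among natural transformations by functoriality together with their image in $H^{2p}(X;\Z)$ being the ordinary Chern classes. Since (1) and (2) establish exactly these two properties for $\mathrm{DD}(\mathfrak{G}(-))$, the identification $\mathrm{DD}(\mathfrak{G}(E))=c_2^B(E)$ is immediate. Your splitting-principle argument, by contrast, must supply two ingredients that are not free: injectivity of the flag-bundle pullback on Deligne cohomology (there is no na\"{\i}ve Leray--Hirsch here; one needs a projective bundle formula in Deligne cohomology, which already lies at the level of the results you are trying to avoid), and a Whitney-type decomposition of $c_2^B$ for filtered bundles, which is again part of the Esnault--Viehweg package. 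The paper's approach buys you all of this for the price of one citation; yours would essentially reprove a chunk of that reference.
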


Applied to $E=T^*X$ we obtain the \emph{canonical $2$-gerbe} of a complex manifold $X$. In the last Section~\ref{sec:appl}, we illustrate an application that relies on the newly established \emph{holomorphic structure} on the canonical $2$-gerbe.

\subsection*{Notation}

For iterated fiber products we use the notation ${Y^{[i]}=Y\times_X \cdots \times_X Y}$ and similarly for maps over $X$. By $\pr_{ijk\cdots}$ we mean the projections onto the indicated factors. The sheaf of holomorphic $k$-forms is $\Omega^k$ and $\O^*$ is the sheaf of nowhere vanishing holomorphic functions. Unless stated otherwise, \emph{all bundles and bundle maps below are assumed to be holomorphic}.

\subsection*{Acknowledgements} I thank Joel Fine for many discussions on these results.

\section{Holomorphic Bundle Gerbes and Connections}\label{sec:HolomorphicGerbes}

In this section we review well-established concepts for smooth gerbes in the holomorphic
context, see~\cite{MR1977885}. Also some new results are developed, such a Koszul--Malgrange Theorem for gerbes in \ref{ssec:relation-smooth} or a generalization of the Atiyah class in \ref{ssec:atiyah}. Brylinski's holomorphic gerbes \cite{MR2362847}, sheaves of groupoids with band $\O^*$, are equivalent to the holomorphic bundle gerbes considered here.

\subsection{Holomorphic Gerbes}\label{ssec:holomorphic-gerbes}
Smooth gerbes represent classes in ${H^2(X, \underline{\C}^*)\cong H^3(X;\Z)}$ geometrically. In the simplest approach, one uses \v{C}ech cocycles for an open cover of $X$, leading to Hitchin--Chatterjee gerbes \cite{Chatterjee}. Below, we shall define holomorphic gerbes on complex manifolds $X$. These objects are designed to represent  cohomology classes in $H^2(X, \O_X^*)$. We will adopt the `bundle gerbe' approach from \cite{MR1405064} which uses descent theory to avoid artificial choices of an open cover.

\begin{definition}\label{def:HolomorphicGerbe}
A \emph{holomorphic gerbe} $\scrG$ on $X$ is a triple $(\pi,L,m)$ consisting of a
holomorphic submersion $\pi\colon Y\rightarrow X$, 
holomorphic line bundle $L\rightarrow Y^{[2]}$, and
bundle isomorphism ${m\colon \pr_{12}^*L \otimes \pr_{23}^*L \rightarrow \pr_{13}^*L}$.
Denoting by $L_{y_1,y_2}$ the fibers of $L$, we get `multiplication maps' ${m\colon L_{y_1,y_2}\otimes L_{y_2,y_3} \rightarrow L_{y_1,y_3}}$, which are required to be associative \textup(see~\textup{\cite[Definition~4.1]{MR2681698}}\textup).
%% ASSOCIATIVITY DIAGRAM
%%
%For all ${(y_1,y_2,y_3,y_4)\in Y^{[4]}}$ we require the commutativity of the following diagram:
%\begin{equation*}
%\begin{tikzpicture}[baseline=(current  bounding  box.center)]
%\matrix (m) [matrix of math nodes, column sep=3em, row sep=2em]
%{
%	L_{y_1,y_2}\otimes L_{y_2,y_3} \otimes L_{y_3,y_4}&L_{y_1,y_2}\otimes L_{y_2,y_4}\\
%	L_{y_1,y_3} \otimes L_{y_3,y_4}&L_{y_1,y_4}\\
%};
%\draw[->, font=\scriptsize] (m-1-1) -- node [left] {$m\otimes\id$} (m-2-1);
%\draw[->, font=\scriptsize] (m-1-1) -- node [above] {$\id\otimes m$} (m-1-2);
%\draw[->, font=\scriptsize] (m-2-1) -- node [above] {$m$} (m-2-2);
%\draw[->, font=\scriptsize] (m-1-2) -- node [right] {$m$} (m-2-2);
%\end{tikzpicture}
%\end{equation*}
\end{definition}

\begin{definition}
A \emph{connection} on ${\scrG=(Y,L,m)}$ consists of a holomorphic connection on $L$ \textup(see~\textup{\cite[Definition~4.2.17]{MR2093043}}\textup) preserved by the multiplication $m$.
A \emph{curving} on a gerbe with connection is a form $f \in \Omega^2(Y)$ related to the curvature of $L$ by
\begin{equation}\label{EqnCurvingCurvature}
	\pr_2^*f - \pr_1^*f = R(\nabla^L).
\end{equation}
The \emph{3-curvature} is then the unique $\rho=R(\scrG,\nabla^L,f) \in \Omega^3(X)$ with ${\pi^*\rho = df}$.
\end{definition}
%
%Here we have used that for a submersion $\pi\colon Y\rightarrow X$ we have an exact sequence
%\begin{equation*}
%\begin{tikzpicture}
%\matrix (m) [matrix of math nodes, row sep=0em,
%    column sep=3.5em, text height=1.5ex, text depth=0.25ex]{
%	0&\Omega^p(X)&\Omega^p(Y)&\Omega^p(Y^{[2]}).\\
%};
%\draw[->, font=\scriptsize] (m-1-1) -- (m-1-2);
%\draw[->, font=\scriptsize] (m-1-2) -- node [above] {$\pi^*$} (m-1-3);
%\draw[->, font=\scriptsize] (m-1-3) -- node [above] {$\pr_2^* - \pr_1^*$} (m-1-4);
%\end{tikzpicture}
%\end{equation*}

In contrast to the smooth category, holomorphic connections are rare (see for example Corollary~\ref{cor-kaehler} below). The precise obstructions to their existence will be explained in Section~\ref{ssec:atiyah} with the help of Deligne cohomology. There is also the weaker notion of \emph{compatible connection} on $\scrG$. It consists of a connection on $L$ only \emph{compatible} with the holomorphic structure (see~\cite{MR2093043}). The curving is then a form of type $(2,0)+(1,1)$.

\subsection{Relationship to Smooth Gerbes}\label{ssec:relation-smooth}

Holomorphic gerbes \emph{with} holomorphic connections may equivalently be described by connective data with certain properties. Recall the obvious variant of Definition~\ref{def:HolomorphicGerbe} in the category of smooth manifolds \cite{MR2681698}.

\begin{theorem}
Let $\scrG=(\pi\colon Y\rightarrow X,L,m)$ be a smooth gerbe whose projection $\pi$ is a holomorphic submersion.
Suppose it is equipped with a connection and curving $f$ of type $(2,0)$ whose $3$-curvature $\rho$ is of type $(3,0)$.
Then there exists a canonical structure of holomorphic gerbe on $\scrG$ with holomorphic connection.
\end{theorem}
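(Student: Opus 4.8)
The plan is to promote all the smooth data to holomorphic data by a Koszul--Malgrange argument, reading the type conditions as integrability conditions. The crucial first observation is that the hypotheses force the curvature $R(\nabla^L)$ to be of type $(2,0)$: since $\pr_1,\pr_2\colon Y^{[2]}\to Y$ are holomorphic, the pullbacks $\pr_i^*f$ of the $(2,0)$-form $f$ remain of type $(2,0)$, so by~\eqref{EqnCurvingCurvature}
\[
	R(\nabla^L)=\pr_2^*f-\pr_1^*f
\]
is itself of type $(2,0)$. In particular its $(0,2)$-part and its $(1,1)$-part both vanish, and these two vanishings will supply, respectively, the holomorphic structure on $L$ and the holomorphicity of $\nabla^L$.

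First I would endow $L$ with a holomorphic structure. Because $R(\nabla^L)^{0,2}=0$, the operator $\bar\partial_L:=(\nabla^L)^{0,1}$ satisfies $\bar\partial_L^2=0$, so the Koszul--Malgrange theorem~\cite{MR0131882} produces a unique holomorphic structure on $L\to Y^{[2]}$ whose Dolbeault operator is $\bar\partial_L$, and for this structure $\nabla^L$ is a compatible connection. Since moreover $R(\nabla^L)^{1,1}=0$, in a local holomorphic frame the connection form is a $\bar\partial$-closed $(1,0)$-form, hence a holomorphic $1$-form; thus $\nabla^L$ is in fact a holomorphic connection. This structure is canonical, being characterized by the identity $\bar\partial_L=(\nabla^L)^{0,1}$.

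Next I would verify that the remaining data is holomorphic. As $\pi$ is a holomorphic submersion, $Y^{[2]}$ and $Y^{[3]}$ are complex manifolds and the projections $\pr_{ij}$ are holomorphic, so each pullback $\pr_{ij}^*L$ inherits a holomorphic structure whose Dolbeault operator is $(\pr_{ij}^*\nabla^L)^{0,1}$. Because the connection on $\scrG$ preserves $m$, the map $m$ intertwines the induced connections on $\pr_{12}^*L\otimes\pr_{23}^*L$ and on $\pr_{13}^*L$, hence their $(0,1)$-parts; so $m$ intertwines the Dolbeault operators and is a holomorphic isomorphism. Associativity, being an identity of bundle maps, is untouched, so $(\pi,L,m)$ is a holomorphic gerbe with holomorphic connection. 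Finally the curving is holomorphic: from $\pi^*\rho=df$ and $\rho$ of type $(3,0)$ the form $df$ is of type $(3,0)$, and comparing types with $f$ of type $(2,0)$ forces $\bar\partial f=0$, so $f$ is a holomorphic $2$-form and $\rho$ a closed $(3,0)$-form, that is a holomorphic $3$-form.

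The only substantive input is the Koszul--Malgrange theorem, which converts $R^{0,2}=0$ into an integrable holomorphic structure; everything else is bookkeeping on the fiber products. The point that most requires care is the compatibility of the two a priori different holomorphic structures on each $\pr_{ij}^*L$ --- the pullback of the holomorphic structure on $L$ versus the one defined by the pulled-back connection $\pr_{ij}^*\nabla^L$ --- which agree precisely because $\pr_{ij}$ is holomorphic, so that pullback commutes with taking the $(0,1)$-part of a connection.
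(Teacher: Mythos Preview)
Your argument is correct and follows essentially the same route as the paper's proof: both use the curving equation to see that $R(\nabla^L)$ has type $(2,0)$, invoke Koszul--Malgrange for the holomorphic structure on $L$, observe that $m$ is holomorphic because it preserves the $(0,1)$-part of the connection, and deduce $\bar\partial f=0$ from $\pi^*\rho=df$ with $\rho$ of type $(3,0)$. Your write-up is in fact slightly more careful than the paper's, which does not spell out why the vanishing of the $(1,1)$-part of the curvature makes $\nabla^L$ a \emph{holomorphic} connection rather than merely a compatible one, nor the compatibility of the two holomorphic structures on $\pr_{ij}^*L$.
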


\begin{proof}
The $(0,1)$-part $(\nabla^L)^{(0,1)}$ of the smooth connection determines a Cauchy-Riemann operator on $L$. By equation \eqref{EqnCurvingCurvature} and our assumption on $\rho$, the curvature of the connection is of type $(2,0)$ which shows in particular that $(\nabla^L)^{(0,1)}\circ (\nabla^L)^{(0,1)} = 0$ is flat. Therefore the Theorem of Koszul--Malgrange \cite{MR0131882} shows that this Cauchy--Riemann operator defines a holomorphic structure on $L$. The multiplication $m$ preserves the $(0,1)$-part of the connection and is thus holomorphic. As the $3$-curvature $\rho$ of type $(3,0)$ satisfies ${\pi^*\rho = df}$, we have $\bar{\partial} f=0$, so the curving $f$ is holomorphic as well.
\end{proof}

Note that, conversely, a holomorphic gerbe with holomorphic connection satisfies the hypotheses of the theorem.
A similar statement applies to holomorphic gerbes with connection, but without curving --- the hypothesis is then that $R(\nabla^L)$ is of type $(2,0)$. Moreover, we note that the weaker notion of compatible connection with curving on $\scrG$ corresponds to a smooth connection whose curving has type $(2,0)+(1,1)$.

\subsection{Deligne Cohomology. Dixmier--Douady Classes}\label{ssec:DD}

The \emph{Deligne complex} $\Z(p)_D$ is the following complex of sheaves (see~\cite[Section~1.5]{MR2362847})
\begin{equation*}
\begin{tikzpicture}
\matrix (m) [matrix of math nodes, row sep=0em,
    column sep=2.5em, text height=1.5ex, text depth=0.25ex]{
	\Z(p)_D\colon 0&\O^*&\Omega^1&\cdots&\Omega^{p-1}.\\
};
\draw[->, font=\scriptsize] (m-1-1) -- (m-1-2);
\draw[->, font=\scriptsize] (m-1-2) -- node [above] {$d\log$} (m-1-3);
\draw[->, font=\scriptsize] (m-1-3) -- node [above] {$d$} (m-1-4);
\draw[->, font=\scriptsize] (m-1-4) -- node [above] {$d$} (m-1-5);
\end{tikzpicture}
\end{equation*}
The \emph{Deligne cohomology} groups $H^q(X, \Z(p)_D)$ are the hypercohomology groups of this complex. For $X$ paracompact they are isomorphic to the \v{C}ech hypercohomology groups $\check{H}^q(X, \Z(p)_D)$ (Godement's Theorem), which are convenient to construct actual classes.

Of course $H^n(X,\Z(1)_D)=H^{n-1}(X,\O^*)$. The higher Deligne groups relate to connective structure. Thus $H^2(X, \Z(2)_D)$ is the group of holomorphic line bundles on $X$ with holomorphic connection, see~\cite{MR1114212}. We now extend this classification to gerbes.

\begin{lemma}[{\cite[Lemma~7.2.3.5]{MR2522659}}]\label{lem:IteratedCover}
Let $\mathcal{U}=\{U_\alpha\}$ be an open covering of a paracompact space $X$, $k\in \N$.
Given an open cover $\mathcal{V}^{\alpha_0\cdots \alpha_k}$ of each $(k+1)$-fold intersection $U_{\alpha_0\cdots \alpha_k}$, there exists a refinement $\{W_\beta\}$ of $\mathcal{U}$ with the property that each $(k+1)$-fold intersection $W_{\beta_0\cdots \beta_k}$ is a subset of some element of $\mathcal{V}^{\alpha_0\cdots \alpha_k}$.
\end{lemma}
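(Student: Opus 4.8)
The plan is to build the refinement $\{W_\beta\}$ pointwise, indexing it by the points of $X$ themselves, after first reducing to a locally finite cover. Since $X$ is paracompact, I would begin by replacing $\mathcal{U}$ by a locally finite open refinement (reindexing, and pulling back each $\mathcal{V}^{\alpha_0\cdots\alpha_k}$ along the refinement map); the statement for the original data then follows at the end by composing refinement maps, since an element of a pulled-back cover sits inside an element of the original one. So I may assume $\mathcal{U}=\{U_\alpha\}$ is locally finite. Paracompact Hausdorff spaces are normal, so the shrinking lemma applies and I would fix an open cover $\{S_\alpha\}$ with $\overline{S_\alpha}\subseteq U_\alpha$.

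Next I would carry out the construction. For each $x\in X$ set $A(x)=\{\alpha: x\in\overline{S_\alpha}\}$, which is finite by local finiteness of $\{\overline{S_\alpha}\}$, and note that $\alpha\in A(x)$ forces $x\in\overline{S_\alpha}\subseteq U_\alpha$. Because $\{S_\alpha\}$ covers, I choose $\tau(x)$ with $x\in S_{\tau(x)}$, so that $\tau(x)\in A(x)$. For every tuple $\vec\gamma=(\gamma_0,\ldots,\gamma_k)\in A(x)^{k+1}$ we have $x\in U_{\gamma_0\cdots\gamma_k}$, so I may pre-select an element $V^{\vec\gamma}_x\in\mathcal{V}^{\gamma_0\cdots\gamma_k}$ with $x\in V^{\vec\gamma}_x$. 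Picking a neighbourhood $N_x$ of $x$ meeting only finitely many $\overline{S_\alpha}$, I then define $W_x$ as the finite, hence open, intersection of $S_{\tau(x)}$, the sets $X\setminus\overline{S_\alpha}$ over the finitely many $\alpha\notin A(x)$ whose closure meets $N_x$, all the $V^{\vec\gamma}_x$ for $\vec\gamma\in A(x)^{k+1}$, and $N_x$ itself. By design $W_x$ is an open neighbourhood of $x$ satisfying (a) $W_x\subseteq S_{\tau(x)}\subseteq U_{\tau(x)}$, (b) $W_x\cap\overline{S_\alpha}=\emptyset$ whenever $\alpha\notin A(x)$, and (c) $W_x\subseteq V^{\vec\gamma}_x$ for all $\vec\gamma\in A(x)^{k+1}$. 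Thus $\{W_x\}_{x\in X}$ is an open refinement of $\mathcal{U}$ via $\tau$.

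For the verification I would take a nonempty intersection $W_{x_0}\cap\cdots\cap W_{x_k}$ and a witness $y$ in it. By (a), $y\in S_{\tau(x_i)}\subseteq\overline{S_{\tau(x_i)}}$ for every $i$, so $y\in W_{x_0}\cap\overline{S_{\tau(x_i)}}$; by (b) this forces $\tau(x_i)\in A(x_0)$ for every $i$. Hence $\vec\tau=(\tau(x_0),\ldots,\tau(x_k))\in A(x_0)^{k+1}$, and (c) gives $W_{x_0}\subseteq V^{\vec\tau}_{x_0}\in\mathcal{V}^{\tau(x_0)\cdots\tau(x_k)}$. Therefore $W_{x_0}\cap\cdots\cap W_{x_k}\subseteq W_{x_0}\subseteq V^{\vec\tau}_{x_0}$, which is the required inclusion.

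The hard part, and the place where the construction must be arranged with care, is exactly this last reconciliation. The element $V^{\vec\tau}_{x_0}$ has to be fixed in advance (attached to the point $x_0$), whereas a priori one only knows that $y$ lies in the large sets $U_{\tau(x_i)}$, near whose boundary the cover $\mathcal{V}^{\tau(x_0)\cdots\tau(x_k)}$ need not behave ``continuously''. Condition (a), namely $W_x\subseteq S_{\tau(x)}$ with $\tau(x)$ chosen so that $x\in S_{\tau(x)}$, is precisely what guarantees that $y$ lands inside the shrunk closed sets $\overline{S_{\tau(x_i)}}$; the closure-avoidance property (b) then transfers all the indices $\tau(x_i)$ into the single finite set $A(x_0)$, so that one pre-chosen element of $\mathcal{V}^{\tau(x_0)\cdots\tau(x_k)}$ contains the whole intersection. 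The shrinking $\{S_\alpha\}$ is indispensable: defining $A(x)$ through the closures $\overline{S_\alpha}$ rather than through the $U_\alpha$ is what secures $x\in U_{\gamma_i}$ for $\gamma_i\in A(x)$, and hence makes the choice of $V^{\vec\gamma}_x$ possible at all.
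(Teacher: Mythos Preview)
The paper does not supply its own proof of this lemma; it merely quotes the statement with a citation to Lurie's \emph{Higher Topos Theory}, Lemma~7.2.3.5. So there is nothing in the paper to compare against.

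Your argument is correct and is essentially the standard one (and in particular matches the strategy in the cited reference): pass to a locally finite refinement, shrink it, and then build $W_x$ pointwise so that it avoids all closed shrunk sets not already containing $x$ while sitting inside the finitely many pre-selected members of the $\mathcal{V}$'s. The key combinatorial step---using property~(b) to force every $\tau(x_i)$ into the single finite index set $A(x_0)$, so that the relevant $V^{\vec\tau}_{x_0}$ was among those pre-chosen for $x_0$---is exactly right and clearly explained. One small remark: you invoke the shrinking lemma via normality of paracompact Hausdorff spaces, whereas the stated hypothesis is only ``paracompact''; depending on convention this may or may not include Hausdorff. In the context of this paper (complex manifolds) the point is moot, but if you want the lemma in the generality stated you should either add Hausdorff as a standing assumption or note that your convention for paracompactness includes it.
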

% The \alpha's are the image of the \beta's under the map belonging to the refinement

\begin{corollary}\label{cor:GerbeTrivial}
Let $\scrG=(\pi,L,m)$ be a holomorphic gerbe. Then we find
arbitrarily fine open covers $\{U_\alpha\}$ of $X$ which admit
holomorphic sections ${s_\alpha\colon U_\alpha \rightarrow Y}$ of $\pi$ and holomorphic trivializations $\sigma_{\alpha\beta}$ of $(s_\alpha,s_\beta)^*L$.
\end{corollary}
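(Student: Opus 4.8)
The plan is to produce the covers in two stages, first extracting local sections of the submersion $\pi$ and then trivializing the pulled-back line bundle, using Lemma~\ref{lem:IteratedCover} to reconcile the two systems of covers at the end. First I would recall that $\pi\colon Y\to X$ is a holomorphic submersion, so by the holomorphic implicit function theorem (or the local triviality of submersions) every point $x\in X$ has a neighbourhood over which $\pi$ admits a holomorphic section. Hence I may choose an arbitrarily fine open cover $\{U_\alpha\}$ of $X$ together with holomorphic sections $s_\alpha\colon U_\alpha\to Y$ of $\pi$. This handles the first requirement, and the fineness is free since the construction is purely local and $X$ is paracompact.

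Next I would address the trivializations of $(s_\alpha,s_\beta)^*L$. For each pair $(\alpha,\beta)$ the map $(s_\alpha,s_\beta)\colon U_{\alpha\beta}\to Y^{[2]}$ is holomorphic, so $(s_\alpha,s_\beta)^*L$ is a holomorphic line bundle on $U_{\alpha\beta}$. A holomorphic line bundle is holomorphically trivial over any sufficiently small (contractible, e.g.\ polydisc) neighbourhood, so over each $U_{\alpha\beta}$ I can cover by opens on which a holomorphic trivialization exists; call this the cover $\mathcal{V}^{\alpha\beta}$ of $U_{\alpha\beta}$. At this point the difficulty is that the trivializing opens depend on the pair $(\alpha,\beta)$ and need not be compatible across different pairs, so I cannot yet write down a single refined cover carrying all the $\sigma_{\alpha\beta}$ at once.

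The key step is to invoke Lemma~\ref{lem:IteratedCover} with $k=1$: given the covers $\mathcal{V}^{\alpha\beta}$ of the double intersections $U_{\alpha\beta}$, there is a refinement $\{W_\gamma\}$ of $\{U_\alpha\}$ such that every double intersection $W_{\gamma\delta}$ lies inside some element of some $\mathcal{V}^{\alpha\beta}$. Pulling back the sections $s_\alpha$ along the refinement (each $W_\gamma$ sits in some $U_{\alpha(\gamma)}$, so set $s_\gamma:=s_{\alpha(\gamma)}|_{W_\gamma}$) gives holomorphic sections over the refined cover, and since each $W_{\gamma\delta}$ is contained in a trivializing open, the restriction of the relevant $\sigma_{\alpha\beta}$ furnishes a holomorphic trivialization $\sigma_{\gamma\delta}$ of $(s_\gamma,s_\delta)^*L$. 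The refinement $\{W_\gamma\}$ can be taken as fine as desired, so the conclusion holds with arbitrarily fine covers.

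I expect the main subtlety to be exactly this bookkeeping of indices under refinement, ensuring that the pulled-back sections and the trivializations are defined on the \emph{same} refined cover; Lemma~\ref{lem:IteratedCover} is precisely the tool designed to overcome it, converting a family of per-intersection covers into one global refinement. The local existence of sections (from submersivity) and of line-bundle trivializations (from the contractibility of small polydiscs) are both standard and pose no real obstacle; the whole content of the corollary is organizing these local data coherently, which is what the lemma accomplishes.
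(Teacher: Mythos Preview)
Your proposal is correct and is exactly the intended argument: the paper states the result as an immediate corollary of Lemma~\ref{lem:IteratedCover} without further proof, and your two-stage construction (local sections from submersivity, then local trivializations of the pulled-back line bundles, reconciled via the lemma with $k=1$) is precisely the unpacking of that implication. The only point worth making explicit is that the refinement map $\gamma\mapsto\alpha(\gamma)$ is what guarantees $W_{\gamma\delta}$ lands in an element of $\mathcal{V}^{\alpha(\gamma)\alpha(\delta)}$ (rather than some unrelated $\mathcal{V}^{\alpha\beta}$), so that the trivialization really is one of $(s_\gamma,s_\delta)^*L$; you gesture at this but could state it more carefully.
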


We now come to the Dixmier--Douady class, which naturally lies in $H^3(X;\Z(p)_D)$, $p$ signifying the amount of connective structure on the gerbe.

\begin{definition}
Let $\scrG$ be a holomorphic gerbe. Pick an open cover $\{U_\alpha\}$ as in Corollary~\ref{cor:GerbeTrivial}. Define $g_{\alpha\beta\gamma}\in \O^*(U_{\alpha\beta\gamma})$ by
\[
	m(\sigma_{\alpha\beta}, \sigma_{\beta\gamma}) = g_{\alpha\beta\gamma} \cdot\sigma_{\alpha\gamma}.
\]
Associativity of $m$ implies that $(g_{\alpha\beta\gamma})$ is closed. Different trivializations give cohomologous cocycles. Taking the limit over a cofinal sequence of covers, we define
\[
	\mathrm{DD}(\scrG)=[(g_{\alpha\beta\gamma})] \in \check{H}^3(X, \Z(1)_D).
\]
If $\scrG$ is equipped with a holomorphic connection $\nabla$, this class may be refined to
\[
	\mathrm{DD}(\scrG, \nabla)\in \check{H}^3(X, \Z(2)_D).
\]
For this we write the connection on $(s_\alpha,s_\beta)^*L$ as ${d + A_{\alpha\beta}}$. Since $m$ preserves the connection we have $d\log(g_{\alpha\beta\gamma}) = A_{\beta\gamma} - A_{\alpha\gamma} + A_{\alpha\beta}$. Hence
$(g_{\alpha\beta\gamma}, A_{\alpha\beta}) \in \check{C}^3(\{U_\alpha\}, \Z(2)_D)$ is a \v{C}ech cocycle.
If, in addition, we suppose $\scrG$ to be equipped with a curving, then $f_\beta - f_\alpha = dA_{\alpha\beta}$ for $f_\alpha=s_\alpha^*f$, so $(g_{\alpha\beta\gamma}, A_{\alpha\beta}, f_\alpha)$ defines a class $\mathrm{DD}(\scrG, \nabla,f) \in H^3(X, \Z(3)_D)$.
\end{definition}

The exponential map to $H^3(X;\Z)$ maps this class to the topological Dixmier--Douady class $\mathrm{DD}^\mathrm{top}(\scrG)$, defined for example in~\cite{MR1405064,MR2681698}.% In particular, for any smooth connection and curving on $\scrG$, the image of $\mathrm{DD}(\scrG)$ in $H^3(X;\C)$ is the class of the $3$-curvature $(2\pi i)^{-1}R(\scrG,\nabla^L,f)$.

\subsection{Obstructions to Holomorphic Connections}\label{ssec:atiyah}

As mentioned, holomorphic connections and curvings cannot always be found. This is measured by the following generalization of the Atiyah class for holomorphic line bundles. The short exact sequence of sheaves
\[
	0\rightarrow\Omega^{p}[-p-1]\rightarrow \Z(p+1)_D \rightarrow \Z(p)_D \rightarrow 0
\]
induces the following exact sequences
\begin{equation*}
\begin{tikzpicture}
\matrix (m) [matrix of math nodes, row sep=0.5em,
    column sep=2.5em, text height=1.5ex, text depth=0.25ex]{
	H^3(X, \Z(2)_D) & H^3(X, \Z(1)_D) & H^2(X, \Omega^1),\\
	H^3(X, \Z(3)_D) & H^3(X, \Z(2)_D) & H^1(X, \Omega^2).\\
};
\draw[->, font=\scriptsize] (m-1-1) -- (m-1-2);\draw[->, font=\scriptsize] (m-1-2) -- node[above]{$B$} (m-1-3);
\draw[->, font=\scriptsize] (m-2-1) -- (m-2-2);\draw[->, font=\scriptsize] (m-2-2) -- node[above]{$C$} (m-2-3);
\end{tikzpicture}
\end{equation*}

\begin{corollary}
Let $\scrG$ be a holomorphic gerbe. Then $\scrG$ admits a holomorphic connection if and only if $B(\mathrm{DD}(\scrG)) = 0$. Assume $\scrG$ is equipped with a holomorphic connection $\nabla$. Then we may find a curving if and only if $C(\mathrm{DD}(\scrG,\nabla))=0$.
\end{corollary}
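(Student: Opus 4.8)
The plan is to read off both equivalences from the long exact hypercohomology sequences attached to the displayed short exact sequences of Deligne complexes, interpreting $B$ and $C$ as the connecting homomorphisms. The conceptual point is that admitting a holomorphic connection (resp.\ a curving) is the same as lifting the Dixmier--Douady class along the forgetful map to the next Deligne complex, and by exactness such a lift exists precisely when the connecting homomorphism annihilates the class. Thus the whole corollary reduces to the slogan ``obstruction vanishes $\iff$ class lifts $\iff$ geometric structure exists.''

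For the first statement, one direction is immediate from the constructions preceding the corollary. If $\scrG$ carries a holomorphic connection $\nabla$, the cocycle $(g_{\alpha\beta\gamma},A_{\alpha\beta})$ represents $\mathrm{DD}(\scrG,\nabla)\in H^3(X,\Z(2)_D)$ and maps to the cocycle $(g_{\alpha\beta\gamma})$ representing $\mathrm{DD}(\scrG)$ under the forgetful map $H^3(X,\Z(2)_D)\to H^3(X,\Z(1)_D)$, so $\mathrm{DD}(\scrG)$ lies in its image and by exactness of the top sequence $B(\mathrm{DD}(\scrG))=0$. Conversely, if $B(\mathrm{DD}(\scrG))=0$ then exactness produces a lift $\tilde\xi\in H^3(X,\Z(2)_D)$ of $\mathrm{DD}(\scrG)$. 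Refining the cover by means of Lemma~\ref{lem:IteratedCover} and Corollary~\ref{cor:GerbeTrivial}, I would represent $\tilde\xi$ by a cocycle $(g_{\alpha\beta\gamma},A_{\alpha\beta})$ whose first component is exactly the cocycle of $\scrG$ coming from trivializations $\sigma_{\alpha\beta}$, so that the relation $d\log g_{\alpha\beta\gamma}=A_{\beta\gamma}-A_{\alpha\gamma}+A_{\alpha\beta}$ says precisely that the local one-forms $A_{\alpha\beta}$ are the connection forms of an $m$-compatible holomorphic connection on $L\to Y^{[2]}$.

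I expect the main obstacle to be exactly this last passage, from a cohomological lift to a genuine geometric connection on $\scrG$; smoothly one averages local connections by a partition of unity, a device unavailable in the holomorphic category, and it is this very failure that $B$ detects. The cleanest way around it is to use that the Dixmier--Douady assignment, extending the line-bundle classification $H^2(X,\Z(2)_D)\cong\{\text{holomorphic line bundles with holomorphic connection}\}$, is surjective onto $H^3(X,\Z(2)_D)$ via the standard reconstruction of a gerbe with connection from a Deligne cocycle: realise $\tilde\xi$ as $\mathrm{DD}(\scrG',\nabla')$ for some holomorphic gerbe with connection, note that $\scrG'$ and $\scrG$ share the same class in $H^2(X,\O^*)=H^3(X,\Z(1)_D)$ and are hence isomorphic by Dixmier--Douady, and transport $\nabla'$ back along any such isomorphism to obtain a holomorphic connection on $\scrG$ inducing $\tilde\xi$. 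Alternatively one argues directly that the holomorphic forms $A_{\alpha\beta}$, subject to the $\delta$-relation above, assemble by the descent already built into the definition of $\mathrm{DD}$ into a well-defined connection on $L$ preserved by $m$.

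The second statement follows by the identical scheme applied one level higher, now using the bottom exact sequence. If a curving $f$ exists, the cocycle $(g_{\alpha\beta\gamma},A_{\alpha\beta},f_\alpha)$ represents $\mathrm{DD}(\scrG,\nabla,f)\in H^3(X,\Z(3)_D)$ lifting $\mathrm{DD}(\scrG,\nabla)$, forcing $C(\mathrm{DD}(\scrG,\nabla))=0$. Conversely, vanishing of $C$ yields a lift to $H^3(X,\Z(3)_D)$, hence local two-forms $f_\alpha$ with $f_\beta-f_\alpha=dA_{\alpha\beta}$; these I would realise as the components $s_\alpha^*f$ of a global curving $f\in\Omega^2(Y)$ with $\pr_2^*f-\pr_1^*f=R(\nabla^L)$, reconstructed by the same descent as above. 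As in the first part, the only genuinely holomorphic subtlety is this passage from the \v{C}ech lift to a globally defined form, which is exactly what $C$ measures.
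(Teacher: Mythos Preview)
The paper gives no proof; the corollary is meant to be read off directly from the long exact sequences, and your argument does exactly that. You are also right that the only substantive step is converting a cohomological lift back into a geometric connection on the \emph{given} gerbe $(Y,L,m)$.

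Your caution at that step is fully warranted: in fact it fails as literally stated. Take $X$ a point, $Y=\mathbb{P}^1$, and $L=\pr_1^*\O(1)\otimes\pr_2^*\O(-1)$ on $Y^{[2]}=\mathbb{P}^1\times\mathbb{P}^1$, with $m$ the evident contraction $\O(-1)_b\otimes\O(1)_b\cong\C$. This is a holomorphic gerbe with $\mathrm{DD}(\scrG)=0$, hence $B(\mathrm{DD}(\scrG))=0$; yet the Atiyah class of $L$ equals $(\mathrm{At}\,\O(1),-\mathrm{At}\,\O(1))$ in $H^1(\mathbb{P}^1{\times}\mathbb{P}^1,\Omega^1)\cong H^1(\mathbb{P}^1,\Omega^1)^{\oplus 2}$ and is nonzero, so $L$ carries no holomorphic connection and neither does this $\scrG$. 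Your ``transport along a stable isomorphism'' cannot repair this, since equipping the isomorphism with a connection requires a holomorphic connection on its line bundle $R$, obstructed by the same mechanism. What \emph{is} true, and what your first resolution actually establishes, is the statement read up to stable isomorphism: $B(\mathrm{DD}(\scrG))=0$ iff some gerbe stably isomorphic to $\scrG$ (for instance the Hitchin--Chatterjee model on a fine enough cover) admits a holomorphic connection. That weaker reading is all the paper uses later, and the same caveat applies verbatim to the curving and $C$.
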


\begin{proposition}
The images of the classes $B(\scrG)$ and $\mathrm{DD}(\scrG)$ in $H^3(X;\C)$ agree.
\end{proposition}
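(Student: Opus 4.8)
The plan is to compute both images by explicit \v{C}ech representatives in the holomorphic \v{C}ech--de Rham double complex $K^{p,q}=\check{C}^p(\mathcal{U},\Omega^q)$ and to exhibit a single primitive relating them. Fix a cover $\{U_\alpha\}$ as in Corollary~\ref{cor:GerbeTrivial}, refined so that all finite intersections are contractible Stein opens (for instance coordinate polydiscs); then the total complex of $(K^{\bullet,\bullet},\delta,d)$, with $d$ the holomorphic exterior derivative, computes $H^\bullet(X;\C)$ (the holomorphic Poincar\'e lemma resolves $\C$ by $\Omega^\bullet_X$ and Cartan's Theorem~B makes \v{C}ech compute the hypercohomology), and each $g_{\alpha\beta\gamma}\in\O^*(U_{\alpha\beta\gamma})$ admits a holomorphic logarithm. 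With the notation of the definition of $\mathrm{DD}$, I would choose local holomorphic connection forms $A_{\alpha\beta}\in\Omega^1(U_{\alpha\beta})$ — these exist over each Stein $U_{\alpha\beta}$ even when no global holomorphic connection does. By the standard recipe for the connecting homomorphism of the sequence defining $B$, the class $B(\scrG)\in H^2(X,\Omega^1)$ is then represented by the \v{C}ech $2$-cocycle $\omega_{\alpha\beta\gamma}=d\log g_{\alpha\beta\gamma}-(A_{\beta\gamma}-A_{\alpha\gamma}+A_{\alpha\beta})\in K^{2,1}$.

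First I would make the comparison map $H^2(X,\Omega^1)\to H^3(X;\C)$ explicit through the stupid filtration $\sigma_{\geq 1}\Omega^\bullet_X$. A class in $H^2(X,\Omega^1)$ maps to $H^3(X;\C)$ precisely once it lifts to $\mathbb{H}^3(X,\sigma_{\geq 1}\Omega^\bullet_X)$, the obstruction being the connecting map to $\mathbb{H}^4(X,\sigma_{\geq 2}\Omega^\bullet_X)$ induced by $d\colon\Omega^1\to\Omega^2$. For our representative this obstruction is $d\omega=-\delta(dA)$, which is the total coboundary of $dA\in K^{1,2}$ inside $\sigma_{\geq 2}\Omega^\bullet_X$ and hence vanishes; consequently $\tilde\omega=\omega+dA\in K^{2,1}\oplus K^{1,2}$ is a cocycle for the total differential and represents the image of $B(\scrG)$ in $H^3(X;\C)$. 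On the other side, writing $\ell_{\alpha\beta\gamma}=\tfrac{1}{2\pi i}\log g_{\alpha\beta\gamma}\in K^{2,0}$, the integral cocycle $n=\delta\ell\in K^{3,0}$ represents $\mathrm{DD}^{\mathrm{top}}(\scrG)\in H^3(X;\Z)$ via the exponential sequence, hence also its image in $H^3(X;\C)$.

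The heart of the argument is then a one-line diagram chase. Setting $\xi=2\pi i\,\ell-A\in K^{2,0}\oplus K^{1,1}$ and using $d\log g=2\pi i\,d\ell$ together with $\delta g=0$, the total differential $D=\delta+(-1)^p d$ yields $D\xi=2\pi i\,n+\tilde\omega$. Thus $\tilde\omega$ and $-2\pi i\,n$ are cohomologous, so the image of $B(\scrG)$ equals $-2\pi i$ times the image of $\mathrm{DD}(\scrG)$ in $H^3(X;\C)$; with the standard normalization of the two comparison maps (which absorbs the universal factor $-2\pi i$) the images agree. I expect the only genuine obstacle to be the well-definedness in the second paragraph: $\omega$ is \emph{not} itself a de Rham cocycle, and one must invoke the local connection forms $A$ to supply the correcting term $dA$ and verify that the filtration obstruction vanishes — once $\tilde\omega$ is in hand the rest is formal. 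A secondary point requiring care is the choice of cover with contractible Stein intersections, needed so that holomorphic logarithms exist and \v{C}ech cohomology computes the relevant hypercohomology groups.
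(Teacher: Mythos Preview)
Your argument is correct and follows the same route as the paper's: both compute $B$ on the \v{C}ech cocycle $g_{\alpha\beta\gamma}$ as the class of $d\log g_{\alpha\beta\gamma}$ and then chase through the \v{C}ech--de~Rham double complex (equivalently, the connecting map for $0\to\C\to\O\to\Omega^1_{\mathrm{cl}}\to 0$) to identify the image in $H^3(X;\C)$ with that of $\delta\log g$. The auxiliary connection forms $A_{\alpha\beta}$ are unnecessary, since $d\log g_{\alpha\beta\gamma}$ is already $d$-closed; taking $A=0$ gives $\tilde\omega=d\log g$ directly as a total cocycle and the single relation $D(2\pi i\,\ell)=2\pi i\,n + d\log g$ finishes the proof, so the ``genuine obstacle'' you anticipate does not actually arise.
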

\begin{proof}
From the definition of the coboundary map, one sees that $B$ maps the \v{C}ech cocycle $g_{\alpha\beta\gamma} \in \O^*(U_{\alpha\beta\gamma})$ to the class of $d\log g_{\alpha\beta\gamma}$ in $H^2(A^1_\mathrm{cl})$.
%% GENAUERE BEGRÜNDUNG
%%
%From the map of exact sequences of sheaves
%\begin{equation*}
%\begin{tikzpicture}%[baseline=(current  bounding  box.center)]
%\matrix (m) [matrix of math nodes, row sep=1em,
%    column sep=2.5em, text height=1.5ex, text depth=0.25ex]{
%	0&\Z&\O&\O^*&0\\
%	0&\C&A^0&A^1_\mathrm{cl}&0\\
%};
%\draw[->, font=\scriptsize] (m-1-1) -- (m-1-2);
%\draw[->, font=\scriptsize] (m-1-2) -- (m-1-3);
%\draw[->, font=\scriptsize] (m-1-3) -- node[above]{$\exp$} (m-1-4);
%\draw[->, font=\scriptsize] (m-1-4) -- (m-1-5);
%
%\draw[->, font=\scriptsize] (m-2-1) -- (m-2-2);
%\draw[->, font=\scriptsize] (m-2-2) -- (m-2-3);
%\draw[->, font=\scriptsize] (m-2-3) -- node[above]{$d$} (m-2-4);
%\draw[->, font=\scriptsize] (m-2-4) -- (m-2-5);
%
%\draw[->, font=\scriptsize] (m-1-2) -- (m-2-2);
%\draw[->, font=\scriptsize] (m-1-3) -- (m-2-3);
%\draw[->, font=\scriptsize] (m-1-4) -- node[right] {$d\log$} (m-2-4);
%
%\end{tikzpicture}
%\end{equation*}
%one gets a commutative diagram
%\begin{equation*}
%\begin{tikzpicture}%[baseline=(current  bounding  box.center)]
%\matrix (m) [matrix of math nodes, row sep=1em,
%    column sep=2.5em, text height=1.5ex, text depth=0.25ex]{
%	H^*(X,\O^*)&H^{*+1}(X;\Z)\\
%	H^*(A^1_\mathrm{cl})&H^{*+1}(X;\C)\\
%};
%\draw[->, font=\scriptsize] (m-1-1) -- node[above]{$\delta$} (m-1-2);
%\draw[->, font=\scriptsize] (m-1-1) -- node[left]{$d\log$} (m-2-1);
%\draw[->, font=\scriptsize] (m-2-1) -- (m-2-2);
%\draw[->, font=\scriptsize] (m-1-2) -- (m-2-2);
%\end{tikzpicture}
%\end{equation*}
Therefore the image of $d\log g_{\alpha\beta\gamma}$ in $H^3(X;\C)$ is the image of the Dixmier--Douady class $\delta(g_{\alpha\beta\gamma})$.
\end{proof}

\begin{corollary}\label{cor-kaehler}
For a compact K\"ahler manifold $X$ the topological Dixmier--Douady class $\mathrm{DD}(\scrG) \in H^3(X;\Z)$ of a gerbe with holomorphic connection must be torsion.
\end{corollary}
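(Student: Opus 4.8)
The plan is to show that the image of $\mathrm{DD}(\scrG)$ in $H^3(X;\C)$ vanishes. Since $X$ is compact it is a finite CW complex, so $H^3(X;\Z)$ is finitely generated and $H^3(X;\C)=H^3(X;\Z)\otimes_\Z\C$ by universal coefficients; the kernel of $H^3(X;\Z)\to H^3(X;\C)$ is then precisely the torsion subgroup, and the vanishing of the image is exactly the assertion to be proved. I would start from the refined \v{C}ech data $\mathrm{DD}(\scrG,\nabla)=(g_{\alpha\beta\gamma},A_{\alpha\beta})$ supplied by the holomorphic connection, for which the $A_{\alpha\beta}$ are holomorphic $1$-forms, hence of type $(1,0)$, and $\tfrac{1}{2\pi i}\,d\log g_{\alpha\beta\gamma}=A_{\beta\gamma}-A_{\alpha\gamma}+A_{\alpha\beta}$.

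The key step is to produce a de Rham representative of $\mathrm{DD}(\scrG)$ adapted to the Hodge filtration. Working in the \v{C}ech--de Rham double complex $C^p(\{U_\alpha\},\mathcal{A}^q)$ with total differential $D=\delta\pm d$, I would run the standard zig-zag computing the image of the integral class in $H^3(X;\C)$. Writing $a_{\alpha\beta\gamma}=\tfrac{1}{2\pi i}\log g_{\alpha\beta\gamma}$, the integral \v{C}ech $3$-cocycle $\delta a$ is transported, via the connection relation $da=\delta A$, to the total cocycle $dA_{\alpha\beta}\in C^1(\{U_\alpha\},\mathcal{A}^2)$: indeed $\delta(dA)=d(\delta A)=\tfrac{1}{2\pi i}dd\log g=0$ and $d(dA)=0$, so $dA_{\alpha\beta}$ is $D$-closed and represents $\mathrm{DD}(\scrG)$ in $H^3(X;\C)$ up to sign. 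Because $A_{\alpha\beta}$ is a holomorphic $1$-form, $dA_{\alpha\beta}=\partial A_{\alpha\beta}$ is of type $(2,0)$, so this representative lies in the second step $F^2H^3(X;\C)$ of the Hodge filtration. This is exactly where the holomorphic connection is used: a merely holomorphic gerbe only forces the class into $F^1$, which is not enough. (Equivalently, one may start from the Proposition, whose representative $d\log g$ for $B(\mathrm{DD}(\scrG))$ is improved by the connection from type $(1,0)$ in \v{C}ech degree $2$ to type $(2,0)$ in \v{C}ech degree $1$.)

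I would then invoke Hodge theory on the compact K\"ahler manifold $X$: the Fr\"olicher spectral sequence degenerates, giving $H^3(X;\C)=\bigoplus_{p+q=3}H^{p,q}$ with $F^2H^3=H^{3,0}\oplus H^{2,1}$, and complex conjugation interchanges $H^{p,q}$ and $H^{q,p}$. Since $\mathrm{DD}(\scrG)$ is the image of the integral, hence real, class $\mathrm{DD}^{\mathrm{top}}(\scrG)$, its image in $H^3(X;\C)$ is invariant under conjugation. A conjugation-invariant class in $F^2H^3=H^{3,0}\oplus H^{2,1}$ must also lie in $\overline{F^2}H^3=H^{0,3}\oplus H^{1,2}$, hence in $F^2\cap\overline{F^2}=0$. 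Therefore the image of $\mathrm{DD}^{\mathrm{top}}(\scrG)$ in $H^3(X;\C)$ vanishes and the class is torsion.

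The step I expect to be the main obstacle is the bookkeeping in the second paragraph: one must check that the zig-zag genuinely terminates at the $(2,0)$-cocycle $dA_{\alpha\beta}$ with the correct signs in $D=\delta\pm d$, and, more substantially, that the naive filtration by holomorphic form-degree on the \v{C}ech--de Rham complex computes the true Hodge filtration $F^\bullet$ on $H^3(X;\C)$. This identification is precisely the content of Fr\"olicher degeneration on a compact K\"ahler manifold, and it is where compactness and the K\"ahler hypothesis enter the argument, over and above their role in the final conjugation step.
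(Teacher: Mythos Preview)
Your argument is correct and is essentially the approach the paper has in mind: the paper states the result as an immediate corollary of the preceding Proposition (that the images of $B(\scrG)$ and $\mathrm{DD}(\scrG)$ in $H^3(X;\C)$ agree) together with the vanishing $B(\scrG)=0$ forced by the holomorphic connection, and your \v{C}ech--de~Rham zig-zag to the $(2,0)$-cocycle $dA_{\alpha\beta}$ is precisely an explicit unpacking of that step, landing the class in $F^2H^3(X;\C)$ and then killing it by the reality constraint on a compact K\"ahler manifold.

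One small clarification on where the K\"ahler hypothesis actually bites: membership in $F^2H^3(X;\C)$ already follows on any complex manifold, since your holomorphic representative $dA_{\alpha\beta}=\partial A_{\alpha\beta}$ lives in the \v{C}ech hypercomplex of $\Omega^{\geq 2}$ and $F^2$ is by definition the image of $\mathbb{H}^3(X,\Omega^{\geq 2})$; no degeneration is needed for that. The compact K\"ahler assumption is used only in the final step, to obtain the Hodge decomposition and hence $F^2H^3\cap\overline{F^2H^3}=0$. So your ``main obstacle'' is easier than you feared, and the genuine use of K\"ahler is exactly the conjugation argument you already gave.
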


This illustrates how restrictive the existence of a holomorphic connection is. For example when $H^3(X;\Z)$ is torsion-free, they exist only on gerbes that are topologically trivial.

\subsection{Morphisms of Gerbes}

There is a naive notion (which we do not discuss) of isomorphism with which two gerbes may have the same Dixmier--Douady class without being isomorphic. When wishing to emphasize this, we also call the morphisms below stable isomorphisms, in accordance with usual terminology.% They are adaption from \cite{MR1794295},~\cite{MR2610397} to the holomorphic category:

\begin{definition}
Let ${\scrG=(Y, L, m)}$, ${\scrG'=(Y', L', m')}$ be holomorphic gerbes on $X$. A \emph{morphism} $F$ from $\scrG$ to $\scrG'$ is a triple $((\varsigma,\varsigma'), R, \phi)$ of a
submersion ${(\varsigma,\varsigma')}\colon {Z \rightarrow Y \times_X Y'}$,
line bundle ${R\rightarrow Z}$, and
bundle isomorphism
\[
	\phi\colon {(\varsigma^{[2]})^*L \otimes \pr_2^* R \rightarrow \pr_1^*R\otimes (\varsigma'^{[2]})^*L'}.
\]
%over $Z^{[2]}$.
Hence $\phi$ gives maps ${L_{y_1,y_2}\otimes R_{z_2} \rightarrow R_{z_1}\otimes L'_{y'_1,y'_2}}$ for ${(z_1,z_2)\in Z^{[2]}}$, where ${\varsigma(z_k)=y_k}$, ${\varsigma'(z_k)=y'_k}$. For ${(z_1,z_2,z_3)\in Z^{[3]}}$ we require commutative diagrams:
\begin{equation*}
\begin{tikzpicture}[baseline=(current  bounding  box.center)]
\matrix (m) [matrix of math nodes, column sep=3em, row sep=2em]
{
	L_{y_1,y_2}\otimes L_{y_2,y_3}\otimes R_{z_3}&
	L_{y_1,y_2}\otimes R_{z_2}\otimes L'_{y'_2,y'_3}&
	R_{z_1}\otimes L'_{y'_1,y'_2}\otimes L'_{y'_2,y'_3}\\
	L_{y_1,y_3}\otimes R_{z_3}&
	&
	R_{z_1}\otimes L'_{y'_1,y'_3}\\
};

\draw[->, font=\scriptsize] (m-1-1) -- node [left] {$m\otimes\id$} (m-2-1);
\draw[->, font=\scriptsize] (m-1-3) -- node [right] {$\id\otimes m'$} (m-2-3);

\draw[->, font=\scriptsize] (m-1-1) -- node [above] {$\id\otimes \phi$} (m-1-2);
\draw[->, font=\scriptsize] (m-1-2) -- node [above] {$\phi\otimes\id$} (m-1-3);

\draw[->, font=\scriptsize] (m-2-1) -- node [above] {$\phi$} (m-2-3);
\end{tikzpicture}
\end{equation*}
\end{definition}

The \emph{composite} of ${(Z,R,\phi)\colon \scrG \rightarrow \scrG'}$ with ${(Z',R',\phi')\colon \scrG' \rightarrow \scrG''}$ is given by the submersion ${Z\times_{Y'} Z' \rightarrow Y\times_X Y''}$ and the line bundle $R\otimes R'$.

\begin{definition}
A \emph{connection} on a morphism $F={(Z,R,\phi)\colon\scrG\rightarrow \scrG'}$ of gerbes with connections is a holomorphic connection on $R$ making $\phi$ connection-preserving.
%If the gerbes are additionally equipped with curvings $f,f$', the \emph{curvature of $F$} is the unique $\rho \in \Omega^2(X)$ satisfying $(Z\rightarrow X)^*\rho = R(\nabla^R) + \varsigma^*f - \varsigma'^*f'$.
\end{definition}

Any two morphisms $(R,\phi), (\tilde{R},\tilde{\phi})\colon \scrG \rightarrow \scrG'$ differ by a line bundle on the base $X$: the maps $\phi$ and $\tilde{\phi}$ define descent data for the line bundle $R_z\otimes \tilde{R}_z^*$ on $Z$. Hence it is the pullback of a bundle on $X$. Similarly with connections.

%Directly from the definitions, we get:

\begin{proposition}
Let $F\colon \scrG \to \scrG'$ be a morphism of holomorphic gerbes. Then
\[
	\mathrm{DD}(\scrG)=\mathrm{DD}(\scrG') \in H^3(X;\Z(1)_D).
\]
If the gerbes and the morphism $F$ have connections, their classes in $H^3(X;\Z(2)_D)$ agree.%If we also have curvings, the classes in $H^3(X;\Z(3)_D)$ agree.
\end{proposition}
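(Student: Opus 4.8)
The plan is to compute both Dixmier--Douady classes with respect to a \emph{single} open cover that is adapted simultaneously to $\scrG$, $\scrG'$ and to the morphism $F=((\varsigma,\varsigma'),R,\phi)$, and then to read off from the coherence hexagon of $F$ that the two defining cocycles differ by a \v{C}ech coboundary. First I would produce this adapted cover. The composite $Z \to Y\times_X Y' \to X$ is a holomorphic submersion, so the argument behind Corollary~\ref{cor:GerbeTrivial} (that is, Lemma~\ref{lem:IteratedCover}) yields arbitrarily fine covers $\{U_\alpha\}$ together with holomorphic sections $t_\alpha\colon U_\alpha \to Z$ and trivializations $r_\alpha$ of $t_\alpha^*R$. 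Setting $s_\alpha=\varsigma\circ t_\alpha$ and $s'_\alpha=\varsigma'\circ t_\alpha$ gives sections of $\pi$ and $\pi'$, and after a further refinement I may assume trivializations $\sigma_{\alpha\beta}$ of $(s_\alpha,s_\beta)^*L$ and $\sigma'_{\alpha\beta}$ of $(s'_\alpha,s'_\beta)^*L'$ exist. Since $\mathrm{DD}$ is computed as a limit over a cofinal sequence of covers, these particular choices compute $\mathrm{DD}(\scrG)$ and $\mathrm{DD}(\scrG')$ through cocycles $g_{\alpha\beta\gamma}$ and $g'_{\alpha\beta\gamma}$.

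Next I introduce the comparison cochain. Over $U_{\alpha\beta}$ the pair $(t_\alpha,t_\beta)$ lies in $Z^{[2]}$, and $\phi$ specializes to ${L_{s_\alpha,s_\beta}\otimes R_{t_\beta} \to R_{t_\alpha}\otimes L'_{s'_\alpha,s'_\beta}}$; writing $\phi(\sigma_{\alpha\beta}\otimes r_\beta)=h_{\alpha\beta}\cdot(r_\alpha\otimes\sigma'_{\alpha\beta})$ defines $h_{\alpha\beta}\in\O^*(U_{\alpha\beta})$. The heart of the argument is the diagram chase: evaluating the commutative hexagon in the definition of a morphism on $\sigma_{\alpha\beta}\otimes\sigma_{\beta\gamma}\otimes r_\gamma$ and tracking the scalars produced by $m$, $m'$ and $\phi$ along the two paths gives $g_{\alpha\beta\gamma}\,h_{\alpha\gamma}=h_{\alpha\beta}\,h_{\beta\gamma}\,g'_{\alpha\beta\gamma}$, that is $g_{\alpha\beta\gamma}/g'_{\alpha\beta\gamma}=(\delta h)_{\alpha\beta\gamma}$. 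Hence the two cocycles differ by the coboundary of $(h_{\alpha\beta})$ and represent the same class in $\check{H}^3(X,\Z(1)_D)=\check{H}^2(X,\O^*)$.

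For the refinement with connections I would additionally record the connection forms $A_{\alpha\beta}$, $A'_{\alpha\beta}$, $a_\alpha$ relative to $\sigma_{\alpha\beta}$, $\sigma'_{\alpha\beta}$, $r_\alpha$. Because the connection on $R$ makes $\phi$ parallel, applying $\nabla$ to the relation defining $h_{\alpha\beta}$ and comparing source and target connection forms yields $A_{\alpha\beta}-A'_{\alpha\beta}=d\log h_{\alpha\beta}-(a_\beta-a_\alpha)$. Together with $g/g'=\delta h$ this exhibits $(g_{\alpha\beta\gamma},A_{\alpha\beta})$ and $(g'_{\alpha\beta\gamma},A'_{\alpha\beta})$ as differing by the total Deligne coboundary of the degree-$2$ cochain $(h_{\alpha\beta},a_\alpha)$, so the classes agree in $\check{H}^3(X,\Z(2)_D)$.

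The delicate point I expect is the first step: arranging a single cover compatible with all three line bundles and both gerbes, since the morphism is defined over $Z$ rather than over $X$. This is exactly what Corollary~\ref{cor:GerbeTrivial} and Lemma~\ref{lem:IteratedCover} are designed to provide, applied to the submersion $Z\to X$, so the only genuine subtlety is bookkeeping. The diagram chase and the connection computation are then essentially forced, with only the sign conventions for the Deligne coboundary left to verify.
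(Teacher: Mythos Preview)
The paper does not actually supply a proof of this proposition: it is stated without argument, followed only by the remark that the analogous fact holds for smooth gerbes, with a citation to Murray--Stevenson \cite[Proposition~3.2]{MR1794295}. So there is nothing to compare against beyond the implicit pointer to the standard \v{C}ech argument in the smooth setting.

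Your proposal is precisely that standard argument, carried out carefully in the holomorphic category, and it is correct. The construction of a cover adapted simultaneously to $\scrG$, $\scrG'$ and $F$ via sections of $Z\to X$ is exactly the right move, and your identification of the delicate point is accurate: once Corollary~\ref{cor:GerbeTrivial}/Lemma~\ref{lem:IteratedCover} are invoked for $Z\to X$ and then refined to trivialize $(s_\alpha,s_\beta)^*L$ and $(s'_\alpha,s'_\beta)^*L'$, the rest is forced. I checked your hexagon chase and the connection computation; both give the relations you state, namely $g_{\alpha\beta\gamma}\,h_{\alpha\gamma}=h_{\alpha\beta}\,h_{\beta\gamma}\,g'_{\alpha\beta\gamma}$ and $A_{\alpha\beta}-A'_{\alpha\beta}=d\log h_{\alpha\beta}-(a_\beta-a_\alpha)$, so $(g,A)$ and $(g',A')$ indeed differ by the total Deligne coboundary of $(h_{\alpha\beta},a_\alpha)$. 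Your caveat about sign conventions is honest and appropriate; once a convention for the total differential on $\check{C}^*(\{U_\alpha\},\Z(2)_D)$ is fixed, the signs line up.

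In short: your proof supplies what the paper omits, and it is the argument one would expect to find behind the citation.
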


Similarly, for smooth gerbes the topological Dixmier--Douady classes agree \cite[Proposition~3.2]{MR1794295}. As a consequence of the previous proposition we have \cite[Proposition~3.4]{MR1794295}:

\begin{corollary}\label{cor-mur-ste}
Let $\scrG_2=(Y_2,L_2,m)$ be smooth gerbe on $X_2$. Consider a commutative diagram
\[\xymatrix{
Y_1 \ar[r]^\phi\ar[d]_{\pi_1}&Y_2\ar[d]^{\pi_2}\\
X_1\ar[r]^\phi & X_2
}\]
of smooth maps, where $\pi_1$ is a submersion. Then by pullback we obtain a gerbe $\scrG_1=(Y_1,\phi^*L_2,\phi^*m)$ on $X_1$ with topological Dixmier--Douady class $\phi^*\mathrm{DD}^\mathrm{top}(\scrG_2) \in H^3(X_1;\Z)$.
\end{corollary}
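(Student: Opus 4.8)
The plan is to reduce to the preceding proposition by exhibiting an explicit morphism from $\scrG_1$ to the \emph{canonical} pullback of $\scrG_2$, whose class is visibly $\phi^*\mathrm{DD}^\mathrm{top}(\scrG_2)$. First I would check that $\scrG_1=(Y_1,\phi^*L_2,\phi^*m)$ is a genuine gerbe: since $\pi_1$ is a submersion the fibre products $Y_1^{[k]}$ are manifolds, and commutativity of the square gives well-defined maps $\phi^{[k]}\colon Y_1^{[k]}\to Y_2^{[k]}$ along which $L_2$ and $m$ pull back, associativity being inherited from that of $m$.

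Next I introduce the canonical pullback $\bar\scrG=(\bar\pi\colon\bar Y\to X_1,\bar L,\bar m)$ with total space $\bar Y=X_1\times_{X_2}Y_2$, where $\bar\pi$ is the first projection (a submersion, being a pullback of $\pi_2$) and $\bar L,\bar m$ are obtained from $L_2,m$ via the second projection $q\colon\bar Y\to Y_2$. Choosing a cover $\{U_\alpha\}$ of $X_2$ with sections $s_\alpha$ of $\pi_2$ and trivialisations $\sigma_{\alpha\beta}$ of $(s_\alpha,s_\beta)^*L_2$ that give the cocycle $g_{\alpha\beta\gamma}$ representing $\mathrm{DD}^\mathrm{top}(\scrG_2)$, the sections $x\mapsto(x,s_\alpha(\phi(x)))$ over $\phi^{-1}(U_\alpha)$ together with the pulled-back trivialisations $\phi^*\sigma_{\alpha\beta}$ yield the cocycle $\phi^*g_{\alpha\beta\gamma}$. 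Hence $\mathrm{DD}^\mathrm{top}(\bar\scrG)=\phi^*\mathrm{DD}^\mathrm{top}(\scrG_2)$ directly from the definition.

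It remains to produce a morphism $F\colon\scrG_1\to\bar\scrG$. The naive candidate $Z=Y_1$, $(\varsigma,\varsigma')\colon y_1\mapsto(y_1,(\pi_1 y_1,\phi(y_1)))$, is only a \emph{section} of $\pr_1\colon Y_1\times_{X_1}\bar Y\to Y_1$ and hence not a submersion, so it does not fit the definition. I circumvent this by taking $Z=Y_1\times_{X_1}\bar Y$ with $(\varsigma,\varsigma')$ the identity (the two projections), tautologically a submersion. On $Z$ both $\phi(y_1)$ and $q(\bar y)$ lie in the fibre $\pi_2^{-1}(\phi(\pi_1 y_1))$, so I set $R_{(y_1,\bar y)}=(L_2)_{\phi(y_1),q(\bar y)}$, the pullback of $L_2$ along $(y_1,\bar y)\mapsto(\phi(y_1),q(\bar y))$. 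For $(z_1,z_2)\in Z^{[2]}$ write $z_k=(u_k,\bar y_k)$ and put $a_k=\phi(u_k)$, $b_k=q(\bar y_k)$; then the required isomorphism $(\phi^*L_2)_{u_1,u_2}\otimes R_{z_2}\to R_{z_1}\otimes\bar L_{\bar y_1,\bar y_2}$ reads $(L_2)_{a_1,a_2}\otimes(L_2)_{a_2,b_2}\to(L_2)_{a_1,b_1}\otimes(L_2)_{b_1,b_2}$, which I define as $m$ into $(L_2)_{a_1,b_2}$ followed by $m^{-1}$. The commutativity condition in the definition of a gerbe morphism is then exactly the associativity of $m$.

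Applying the smooth analogue of the previous proposition to $F$ gives $\mathrm{DD}^\mathrm{top}(\scrG_1)=\mathrm{DD}^\mathrm{top}(\bar\scrG)=\phi^*\mathrm{DD}^\mathrm{top}(\scrG_2)$, as claimed. I expect the only real obstacle to be the submersion requirement in the definition of a morphism, which is resolved above by enlarging $Z$ to the full fibre product so that $(\varsigma,\varsigma')$ becomes the identity; once this is arranged, both the construction of $R$ and the verification of the structure isomorphism are formal consequences of the multiplication $m$ and its associativity.
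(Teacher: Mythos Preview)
Your proposal is correct and follows exactly the route the paper indicates: the corollary is stated in the paper without proof, merely as ``a consequence of the previous proposition'' (that morphisms preserve the Dixmier--Douady class), with a citation to Murray--Stevenson. You have supplied precisely the details this entails, namely the construction of a stable morphism from $\scrG_1$ to the canonical pullback $\phi^*\scrG_2$, and your handling of the submersion requirement by taking $Z=Y_1\times_{X_1}\bar Y$ with the identity map is the standard fix.
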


\subsection{Transformations}

We now come to the 2-morphisms of our bicategory.

\begin{definition}
Let $F=(Z, R, \phi)$ and $\tilde{F}=(\tilde{Z}, \tilde{R}, \tilde{\phi})$ be morphisms from $\scrG=(Y,L,m)$ to $\scrG'=(Y',L',m')$.
A \emph{transformation} ${\alpha\colon F \Rightarrow \tilde{F}}$ consists of
\begin{enumerate}
\item
A submersion ${(\kappa,\tilde{\kappa})\colon W \rightarrow Z\times_{Y\times Y'} \tilde{Z}}$
\item
A bundle isomorphism $\psi\colon \kappa^*R \rightarrow \tilde{\kappa}^*\tilde{R}$ over $W$.
\end{enumerate}
So $\psi$ induces maps $\psi_w\colon R_{z} \rightarrow \tilde{R}_{\tilde{z}}$, for $(\kappa,\tilde{\kappa})(w)=(z,\tilde{z})$.
For ${(w_1, w_2) \in W^{[2]}}$ let $(\kappa,\tilde{\kappa})(w_k)=(z_k,\tilde{z}_k)$, $(\varsigma,\varsigma')(z_k)=(y_k,y'_k)$. We require the commutativity of
\begin{equation*}
\begin{tikzpicture}[baseline=(current  bounding  box.center)]
\matrix (m) [matrix of math nodes, column sep=3em, row sep=2em]
{
	L_{y_1,y_2}\otimes R_{z_2}	&	R_{z_1}\otimes L'_{y'_1, y'_2}\\
	L_{y_1,y_2}\otimes \tilde{R}_{\tilde{z}_2}	&	\tilde{R}_{\tilde{z}_1}\otimes L'_{y'_1,y'_2}\\
};
\draw[->, font=\scriptsize] (m-1-1) -- node [left] {$\id\otimes\psi$} (m-2-1);
\draw[->, font=\scriptsize] (m-1-1) -- node [above] {$\phi$} (m-1-2);
\draw[->, font=\scriptsize] (m-1-2) -- node [right] {$\psi\otimes \id$} (m-2-2);
\draw[->, font=\scriptsize] (m-2-1) -- node [below] {$\phi$} (m-2-2);
\end{tikzpicture}
\end{equation*}
If the morphisms $F, \tilde{F}$ are equipped with connections, the transformation $\alpha$ is \emph{compatible with the connections} if $\psi$ preserves connections.
\end{definition}

Two transformations are identified if they coincide on a pullback \cite{MR2318389}. This gives a bicategory of gerbes.

%\begin{remark}
%Two transformations $(W,\psi)$ and $(W',\psi')$ are \emph{equivalent} if one finds a transformation $(\hat{W}, \hat{\psi})$ whose submersion factors as both $\hat{W}\rightarrow W \rightarrow Z\times\tilde{Z}$ and $\hat{W} \rightarrow W' \rightarrow Z\times\tilde{Z}$ and so that the pullbacks of $\psi$ and $\psi'$ to $\hat{W}$ coincide. Factoring out by the relation, we obtain a bicategory of gerbes. For the horizontal and vertical compositions of transformations, we refer to \cite{MR2318389}.
%\end{remark}

\subsection{Further Operations}

Let $\scrG=(Y,L,m)$ be a gerbe on $X$ and let ${f\colon X' \rightarrow X}$ be a holomorphic map. The \emph{pullback gerbe}
$f^*\scrG$ is given by ${Y' = Y\times_X X' \rightarrow X'}$ and the pullback line bundle of $L$.\\
%The \emph{dual gerbe} is $\scrG^*=(Y,L^*,(m^*)^{-1})$ with multiplication
%\[
%	L_{xy}^* \otimes L_{yz}^* \cong (L_{xy}\otimes L_{yz})^* \xrightarrow{(m^{-1})^*} L_{xz}^*.
%\]

The \emph{tensor product} $\scrG\otimes \scrG'$ of two gerbes $\scrG=(Y,L,m)$, $\scrG'=(Y',L',m')$ on $X$ has the submersion ${Y\times_X Y' \rightarrow X}$ and the exterior product line bundle $L\otimes L'$. Similarly, we have a tensor product of morphisms.
For more details, see \cite{MR2318389}.
The Dixmier--Douady class is compatible with pullback and additive with respect to tensor products. Equipping $f^*\scrG$ and $\scrG\otimes\scrG'$ with the tensor product and pullback connections, this also holds for the $3$-curvature.

\section{The Canonical Gerbe on $\GL(n,\C)$}\label{sec:HolGerbe}

\subsection{Further Notation}\label{prelims-con}

Using the exponential map we transport the order on $[0,2\pi[$ to the unit circle. More generally we shall use the notation $x < y$ for $x,y \in \C^\times$ when $x/|x| < y/|y|$ (only a transitive relation). Hence $x<y$ when the ray through $y$ is obtained from the ray through $x$ by a proper counterclockwise rotation not passing through the positive reals.

Similarly the notation $x\leq y$ includes that $x$ and $y$ may lie on the same ray.

\begin{definition}
Let $x=r_xe^{i\varphi_x}, y = r_y e^{i\varphi_y} \in \C^\times$ be non-zero complex numbers, written in polar coordinates with $0\leq \varphi_y - \varphi_x < 2\pi$. The \emph{sector} of radii $0\leq r < R \leq +\infty$ is
\[
	S^{r,R}(x,y) = \left\{  se^{i\varphi} \mid r < s < R, \varphi_x < \varphi < \varphi_y \right\}.
\]
% Nie leer, immer abgeschlossen.
We write $S(x,y)$ when $r=0, R=+\infty$. The \emph{ray} through $x\in \C^\times$ is the subset $\R_{>0}x$. For $x,y \in \C^-=\C\setminus[0,\infty)$ we define also the \emph{unordered sector}
\[
	S^{r,R}[x,y] = \begin{cases}
		S^{r,R}(x,y)	&	(x\leq y),\\
		S^{r,R}(y,x)	&	(y\leq x).
	\end{cases}
\]
\end{definition}
% One may always increase both \varphi_x and \varphi_y by 2\pi.
% But the resulting set stays the same.

For a matrix $A\in \GL(n,\C)$ let ${\chi_A= \prod_{\lambda} (X-\lambda)^{n_\lambda}}$ be the characteristic polynomial and write $\operatorname{spec}(A) \subset \C^\times$ for its spectrum. Recall that $\C^n$ may be decomposed as an internal direct sum of the $n_\lambda$-dimensional generalized eigenspaces ${V_\lambda(A) = \ker(A-\lambda\cdot \id)^{n_\lambda}}$.
We say $\lambda \in \C^\times$ is an \emph{eigenray} of $A$ if the ray through $x$ meets $\operatorname{spec}(A)$.

\begin{definition}
For a subset $S\subset \C$ define
\[
	V_S(A) = \bigoplus\{V_\kappa(A) \mid \kappa \in S\}% = \{\psi\colon S\to \C^n \mid \psi(\kappa) \in V_\kappa(A)\enskip (\forall \kappa \in S)\}
\]
% More formally, let $V_S(A)$ be the space of functions $f\colon \C^\times \to \C^n$ vanishing outside of $S$ and with $f(x)\in V_x(A)$ for all $x$.
as the internal direct sum over all subspaces $V_\lambda(A) \subset \C^n$ with $\lambda \in S$. 
\end{definition}

For $S, T, R \subset \C$ with $S\cap T \cap \operatorname{spec}(A)=\emptyset$ and $(S\cup T) \cap \operatorname{spec}(A)=R\cap \operatorname{spec}(A)$
\begin{equation}\label{disj-property}
	V_S(A) \oplus V_T(A)=V_{R}(A).
\end{equation}

The highest exterior power of a finite-dimensional vector space with `$\oplus$' defines a strong monoidal functor $\Lambda^\mathrm{top}$ to one-dimensional vector spaces with `$\otimes$', where $\Lambda^\mathrm{top}(\{0\})=\C$.

\subsection{Construction of the Gerbe}

Let $\C^- = \C\setminus [0,\infty)$ and
\begin{equation}\label{Y}
	Y= \left\{(A,z) \in \GL(n,\C)\times \C^-\;\middle|\;  \text{$z$ not an eigenray of $A$} \right\}.
\end{equation}
Using that eigenvalues are bound by the norm, one shows that the set of all $(A,z) \in \GL(n,\C)\times S^1$ with $\R_{>0}z \cap \operatorname{spec}(A)\neq \emptyset$ is closed. It follows that $Y$ is an open subset, in particular a complex manifold and also the projection
\begin{equation}\label{pi-def}
	\pi\colon Y \rightarrow \GL(n,\C),\quad (A,z)\mapsto A
\end{equation}
is a holomorphic submersion. We define a family of complex vector spaces ${L\rightarrow Y^{[2]}}$ by defining the fiber over $(A,x,y) \in Y^{[2]}=Y\times_{\GL(n,\C)} Y$ as follows:

%%%%%%%%%%%%%
% Proof that Y is an open set
%
%\begin{proof}
%Let $n\colon \C^\times \rightarrow S^1$ be the normalisation and let
%\[
%	X=\left\{(A,z) \in \GL_n(\C)\times \C^\times \mid \det(A-z\id)=0\right\}.
%\]
%Our first claim is that
%\[
%	L=(\id\times n)(X)=\{(A,\lambda)\in \GL_n(\C) \times S^1 \mid \text{$\R_{>0}\lambda$ contains an eigenvalue of $A$}\}
%\]
%is closed. Given a sequence $(A_n, \lambda_n)\in L$ converging towards $(A,\lambda)$ we may write
%\begin{equation}\label{EqnEVPolar}
%z_n = \lambda_n r_n
%\end{equation}
%where $r_n > 0$ and
%\begin{equation}\label{EqnEVPolar2}
%	\det(A_n - z_n\id)=0.
%\end{equation}
%We have $|z_n|\leq \|A_n\|$ and since convergent sequence $A_n$ is bounded, we may pass to a subsequence and assume that $z_n \rightarrow z\in \C$ converges. Taking the limit of \eqref{EqnEVPolar2} we find that $z$ is an eigenvalue of $A$, hence $(A,z)\in X$ and $z\neq 0$. It follows that $(z_n)$ is bounded away from zero and hence that $r_n=|z_n|^{-1}$ is bounded. Passing again to a subsequence, we assume $r_n \rightarrow r$. Taking the limit of \eqref{EqnEVPolar} we have $z=\lambda r$. Hence $(A,\lambda)=(\id\times n)(A,z)$.
%
%Finally, note that the complement of $Y$ is $(\id\times n)^{-1}(L)$.
%\end{proof}
%%%%%%%%%%%%%

\begin{definition}
For $x,y \in \C^\times$ let $\lambda_A(x)= \Lambda^\mathrm{top}(V_{S(1,x)}(A))$ and
\begin{equation}\label{L-def}
	L_{A,x,y} = \lambda_A(x)\otimes \lambda_A(y)^*.
\end{equation}
\end{definition}

The multiplication $m$ of the gerbe is the restriction to $Y^{[3]}$ of the following operation:

\begin{definition}\label{def:multipl}
For $x,y,z \in \C^\times$ we have a bilinear map
\begin{equation}\label{multi-def}
	m\colon L_{A,x,y}\times L_{A,y,z} \to L_{A,x,z},\quad
	m(u\otimes \alpha, v\otimes\beta) = \alpha(v) u\otimes\beta,
\end{equation}
where $u\in\lambda_A(x), \alpha \in \lambda_A(y)^*, v\in \lambda_A(y), \beta\in \lambda_A(z)^*$.
\end{definition}

\begin{lemma}\label{lem:alg-gerbe}
The gerbe multiplication is associative.
\end{lemma}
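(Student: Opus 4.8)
The plan is to verify associativity by a direct fiberwise computation, which reduces to a formal identity for the contraction in Definition~\ref{def:multipl}. Since each $\lambda_A(x)$ is one-dimensional, every element of a fiber $L_{A,x,y}=\lambda_A(x)\otimes\lambda_A(y)^*$ is a simple tensor; hence it suffices to check the two bracketings of the triple product on simple tensors over a point $(A,w,x,y,z)\in Y^{[4]}$.

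Concretely, I would fix $a\otimes\alpha\in L_{A,w,x}$, $b\otimes\beta\in L_{A,x,y}$, and $c\otimes\gamma\in L_{A,y,z}$, where $a\in\lambda_A(w)$, $\alpha\in\lambda_A(x)^*$, $b\in\lambda_A(x)$, $\beta\in\lambda_A(y)^*$, $c\in\lambda_A(y)$, $\gamma\in\lambda_A(z)^*$. Multiplying the first two factors yields $\alpha(b)\,a\otimes\beta$, and a further multiplication by $c\otimes\gamma$ produces $\beta(c)\,\alpha(b)\,a\otimes\gamma$. In the other order, multiplying the last two factors first gives $\beta(c)\,b\otimes\gamma$, and multiplying $a\otimes\alpha$ with this returns $\alpha(b)\,\beta(c)\,a\otimes\gamma$. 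The two results coincide because the scalars $\alpha(b),\beta(c)\in\C$ commute.

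A conceptually cleaner description, which I would record as a remark, is to identify $L_{A,x,y}=\lambda_A(x)\otimes\lambda_A(y)^*$ with $\mathrm{Hom}(\lambda_A(y),\lambda_A(x))$. Under this identification the formula $m(u\otimes\alpha,v\otimes\beta)=\alpha(v)\,u\otimes\beta$ is exactly composition of linear maps $\lambda_A(z)\to\lambda_A(y)\to\lambda_A(x)$, so associativity of $m$ becomes an instance of the associativity of composition and requires no computation at all.

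I do not expect any real obstacle. The formula for $m$ uses only the tensor-product structure and the canonical evaluation pairing between $\lambda_A(y)$ and its dual; it is completely insensitive to the geometric definition of $\lambda_A(x)=\Lambda^{\mathrm{top}}(V_{S(1,x)}(A))$ in terms of generalized eigenspaces, and in particular makes no use of the decomposition property~\eqref{disj-property}. Thus the identity already holds at the level of the formula in Definition~\ref{def:multipl} for arbitrary $x,y,z\in\C^\times$, and the gerbe associativity over $Y^{[4]}$ is obtained by mere restriction; holomorphicity and the submersion structure of $\pi$ play no role whatsoever.
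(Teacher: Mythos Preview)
Your proof is correct and essentially identical to the paper's: both perform the direct fiberwise check on simple tensors, reducing associativity to the commutativity of the scalars $\alpha(b)$ and $\beta(c)$ (the paper's $\alpha(v)$ and $\beta(w)$). Your additional remark interpreting $m$ as composition in $\mathrm{Hom}(\lambda_A(y),\lambda_A(x))$ is a pleasant conceptual gloss not present in the paper, but the core argument is the same straightforward verification.
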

\begin{proof}
Straight-forward verification:
\begin{align*}
m(m(u\otimes \alpha, v\otimes \beta), w\otimes\gamma)&=m(\alpha(v) u\otimes \beta, w\otimes \gamma) = \alpha(v)\beta(w) u\otimes \gamma,\\
m(u\otimes \alpha,m(v\otimes \beta, w\otimes\gamma)) &=m(u\otimes\alpha, \beta(w) v\otimes\gamma) =
\beta(w)\alpha(v) u\otimes \gamma.
%\qedhere
%m(m(\alpha\otimes u, \beta\otimes v),\gamma\otimes w) &=
%m(\beta(u) \alpha\otimes v, \gamma\otimes w) = \beta(u)\gamma(v) \alpha\otimes w\\
%m(\alpha\otimes u, m(\beta\otimes v, \gamma\otimes w)) &=
%m(\alpha\otimes u, \gamma(v)\beta\otimes w) = \gamma(v)\beta(u) \alpha\otimes w
\end{align*}
\end{proof}

\begin{lemma}\label{identific}
We have canonical isomorphisms:
\begin{enumerate}
\item
$L_{A,y,x}\cong L_{A,x,y}^*$.
\item
$L_{A,x,y}\cong \Lambda^\mathrm{top}(V_{S(y,x)})$, provided $x\geq y$ and $y$ is not an eigenray of $A$. The gerbe multiplication corresponds to the external wedge product under this identification.
\item
$L_{A,x,y}\cong \C$, provided $\spec(A)\cap S[x,y]=\emptyset$.
\end{enumerate}
\end{lemma}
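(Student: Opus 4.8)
The plan is to deduce all three statements from the strong monoidal functor $\Lambda^\mathrm{top}$ together with the direct-sum decomposition \eqref{disj-property} of generalized eigenspaces. With this in hand, parts (1) and (3) become essentially formal, and the real content lies in the sector decomposition underlying (2) and in the identification of the multiplication with the wedge product.

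Statement (1) is purely a matter of duality in the monoidal category of one-dimensional vector spaces. Since $L_{A,x,y}=\lambda_A(x)\otimes\lambda_A(y)^*$, the canonical isomorphisms $(V\otimes W)^*\cong V^*\otimes W^*$ and $V^{**}\cong V$ give
\[
	L_{A,x,y}^* \cong \lambda_A(x)^*\otimes\lambda_A(y)\cong\lambda_A(y)\otimes\lambda_A(x)^*=L_{A,y,x},
\]
with no hypotheses needed. For (2) I would first record the partition of sectors: writing arguments in $[0,2\pi)$ and using $x\ge y$ together with the hypothesis that $y$ is not an eigenray, the open sector $S(1,x)$ is the disjoint union $S(1,y)\sqcup \R_{>0}y\sqcup S(y,x)$, while $\R_{>0}y$ meets no point of $\spec(A)$. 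Hence the subsets $S(1,y)$ and $S(y,x)$ satisfy the hypotheses of \eqref{disj-property} with $R=S(1,x)$, so that $V_{S(1,x)}(A)=V_{S(1,y)}(A)\oplus V_{S(y,x)}(A)$. Applying the strong monoidal functor $\Lambda^\mathrm{top}$ yields $\lambda_A(x)\cong\lambda_A(y)\otimes\Lambda^\mathrm{top}(V_{S(y,x)})$, and contracting $\lambda_A(y)$ against $\lambda_A(y)^*$ produces the desired isomorphism
\[
	\Phi_{xy}\colon L_{A,x,y}=\lambda_A(x)\otimes\lambda_A(y)^*\xrightarrow{\ \sim\ }\Lambda^\mathrm{top}(V_{S(y,x)}).
\]
The degenerate case where $x,y$ lie on a common ray gives $V_{S(y,x)}=\{0\}$ and $L_{A,x,y}\cong\C$, consistent with this.

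The one genuinely delicate point, and the step I expect to require the most care, is the assertion that $\Phi$ intertwines the gerbe multiplication with the external wedge product. Here I would fix, for $x\ge y\ge z$, compatible volume elements $\omega_{1z},\omega_{zy},\omega_{yx}$ of the respective eigenspaces, so that $\omega_{1y}=\omega_{1z}\wedge\omega_{zy}$ and $\omega_{1x}=\omega_{1z}\wedge\omega_{zy}\wedge\omega_{yx}$ generate $\lambda_A(y)$ and $\lambda_A(x)$. A direct computation then shows that $\Phi_{xy},\Phi_{yz},\Phi_{xz}$ send the evident generators to $\omega_{yx}$, $\omega_{zy}$ and $\omega_{zy}\wedge\omega_{yx}$, while the multiplication formula $m(u\otimes \alpha,v\otimes\beta)=\alpha(v)\,u\otimes\beta$ sends $\omega_{1x}\otimes\omega_{1z}^*$ to $\omega_{zy}\wedge\omega_{yx}$. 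The subtlety is the bookkeeping of the order of the wedge factors, equivalently the Koszul sign $(-1)^{\dim V_{S(y,x)}\cdot\dim V_{S(z,y)}}$: the factors must be wedged in the counterclockwise order $S(z,y)$ before $S(y,x)$ dictated by $V_{S(z,x)}=V_{S(z,y)}\oplus V_{S(y,x)}$, which is the order opposite to their appearance in $L_{A,x,y}\otimes L_{A,y,z}$. One checks that, with the conventions above, this reordering matches the monoidal structure exactly, so that $m$ is identified with the external wedge product; associativity of $m$ then becomes associativity of $\wedge$, in accordance with Lemma~\ref{lem:alg-gerbe}.

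Finally, (3) is a corollary of (1) and (2). If $y\le x$ then $S[x,y]=S(y,x)$, and $\spec(A)\cap S(y,x)=\emptyset$ forces $V_{S(y,x)}(A)=\{0\}$; since $y$ is not an eigenray, as $(A,y)\in Y$, part (2) gives $L_{A,x,y}\cong\Lambda^\mathrm{top}(\{0\})=\C$. If instead $x\le y$ then $S[x,y]=S(x,y)$, and applying (2) to the pair $(y,x)$ gives $L_{A,y,x}\cong\Lambda^\mathrm{top}(V_{S(x,y)})=\C$, whence $L_{A,x,y}\cong L_{A,y,x}^*\cong\C$ by (1). This exhausts all cases.
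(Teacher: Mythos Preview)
Your proof is correct and follows essentially the same route as the paper: for (1) the symmetry of the tensor product, for (2) the decomposition $V_{S(1,x)}=V_{S(1,y)}\oplus V_{S(y,x)}$ via \eqref{disj-property} followed by the monoidal structure on $\Lambda^\mathrm{top}$. Your treatment of the multiplication/wedge compatibility is in fact more detailed than the paper's, which simply asserts it; your attention to the Koszul sign is appropriate and the outcome is as you say.

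The only genuine deviation is in (3). The paper argues directly: the hypothesis $\spec(A)\cap S[x,y]=\emptyset$ (together with $x,y$ not being eigenrays, as implicit from the ambient $Y^{[2]}$ context) forces $V_{S(1,x)}(A)=V_{S(1,y)}(A)$, hence $\lambda_A(x)=\lambda_A(y)$ literally, and $L_{A,x,y}\cong\C$ by the evaluation pairing. You instead route (3) through (1) and (2), which is valid but slightly longer and depends on first having (2) in hand. Both arguments rely on the same unstated assumption that $x,y$ are not eigenrays, so neither has an advantage there. The direct argument is marginally cleaner since it avoids the case split on $x\le y$ versus $y\le x$; your approach has the virtue of making the logical dependencies among the three parts explicit.
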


\begin{proof}
1. Symmetry isomorphism of `$\otimes$'.
2. Apply \eqref{disj-property} to $\left(S(1,y), S(y,x),S(1,x)\right)$ to get $V_{S(1,y)}\oplus V_{S(y,x)} = V_{S(1,x)}$. The monoidal structure on $\Lambda^\mathrm{top}$ gives $\lambda(y)\otimes \Lambda^\mathrm{top}(V_{S(y,x)}) \cong \lambda(x)$.
3. $V_{S(1,x)}(A)=V_{S(1,y)}(A)$, so $\lambda_A(x) = \lambda_A(y)$.
\end{proof}

We still need to define a complex manifold structure on the total space $L$ for which the bundle becomes locally trivial. For this it suffices to produce holomorphic line bundle structures on the restriction of $L$ over an open cover of $Y^{[2]}$, making sure that on overlaps the corresponding complex structures agree.

\subsection{Holomorphic Structure}

By a \emph{domain} we mean an open bounded subset ${O\subset \C}$ with piecewise smooth boundary. For example, a bounded sector is a domain.

\begin{lemma}\label{tech-lemma123}
Let $A_0 \in \GL(n,\C)$ and let $O$ be a domain. Let $O\subset S$ be a superset satisfying $\spec(A_0)\cap \bar{S} \subset O$. Then there exists a neighborhood $U_0$ of $A_0$ such that:
\begin{enumerate}
\item
% counted with algebraic multiplicity
The number of eigenvalues $n_A(O)=n_{A_0}(O)$ of $A\in U_0$ in $O$ counted with multiplicity is constant.
\item
$\spec(A)\cap S = \spec(A) \cap O$ \textup($\forall A\in U_0$\textup).
\end{enumerate}
%\[
%	\spec(A)\cap S \subset O.
%\]
In particular $n_A(S)=n_A(O)=n_{A_0}(O)=n_{A_0}(S)$ and $V_S(A) = V_O(A)$.
%In particular, by Lemma~\ref{identific} $L|_W$ is canonically isomorphic to the highest exterior power of $V_O(A)$ or its dual.
\end{lemma}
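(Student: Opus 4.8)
The plan is to derive both assertions from a single continuity principle: the coefficients of the characteristic polynomial $\chi_A(z)=\det(zI-A)$ depend polynomially, hence continuously, on the entries of $A$, so $\chi_A$ cannot acquire new zeros on a \emph{compact} set where $\chi_{A_0}$ is nonvanishing. The one genuine subtlety is that $S$, and with it $\bar{S}\setminus O$, may be unbounded; this is where the a priori bound of eigenvalues by the matrix norm (already used to define $Y$) is essential. First I would record the consequence of the hypothesis that does all the work. Since $O\subset S$ gives $\partial O\subset\bar{O}\subset\bar{S}$, while $O$ is open so that $\partial O\cap O=\emptyset$, the assumption $\spec(A_0)\cap\bar{S}\subset O$ forces
\[
	\spec(A_0)\cap(\bar{S}\setminus O)=\emptyset;
\]
in particular $A_0$ has no eigenvalue on $\partial O$.

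Next I would cut $\bar{S}\setminus O$ down to compact size. Choose a ball $B(A_0,1)$ in operator norm so that every $A$ in it satisfies $\|A\|<R:=\|A_0\|+1$, whence $\spec(A)\subset D(0,R)$. Then for such $A$ we have $\spec(A)\cap(\bar{S}\setminus O)=\spec(A)\cap K$ for the \emph{compact} set $K:=(\bar{S}\setminus O)\cap\bar{D}(0,R)$. Since $\chi_{A_0}$ does not vanish on $K$, compactness gives $\lvert\chi_{A_0}\rvert\geq\delta>0$ on $K$, and by continuity of $A\mapsto\chi_A$ there is a neighborhood $U_0\subset B(A_0,1)$ of $A_0$ on which $\chi_A$ still has no zero on $K$. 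For $A\in U_0$ this means $\spec(A)\cap(\bar{S}\setminus O)=\emptyset$; as $S\setminus O\subset\bar{S}\setminus O$, this is precisely statement~(2), $\spec(A)\cap S=\spec(A)\cap O$.

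For statement~(1) I would invoke the zero-counting integral
\[
	n_A(O)=\frac{1}{2\pi i}\oint_{\partial O}\frac{\chi_A'(z)}{\chi_A(z)}\,dz,
\]
which is legitimate because, after shrinking $U_0$ so that $\chi_A$ is also nonvanishing on the compact set $\partial O\subset\bar{S}\setminus O$, no eigenvalue of $A$ lies on $\partial O$. The integrand then depends continuously on $A\in U_0$, so the right-hand side is a continuous $\Z$-valued function, hence constant equal to $n_{A_0}(O)$. Finally the ``in particular'' clause is formal: by~(2) the eigenvalues of $A$ in $S$ and in $O$ agree as sets and with multiplicities, so $n_A(S)=n_A(O)$, and applying~(2) at $A=A_0$ as well gives $n_{A_0}(S)=n_{A_0}(O)$; combined with~(1) this yields the full chain of equalities. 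Since $V_\kappa(A)\neq 0$ only for $\kappa\in\spec(A)$, the defining direct sums $V_S(A)$ and $V_O(A)$ run over the identical index set $\spec(A)\cap S=\spec(A)\cap O$, so $V_S(A)=V_O(A)$. The main obstacle throughout is the possible unboundedness of $S$; the norm bound on eigenvalues is exactly the device that replaces $\bar{S}\setminus O$ by the compact $K$ and lets the continuity-of-coefficients argument apply.
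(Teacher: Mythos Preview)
Your proof is correct. Both you and the paper use the zero-counting integral over $\partial O$ for statement~(1), but you handle statement~(2) differently. The paper constructs an auxiliary domain $O'$ consisting of small disjoint balls around the eigenvalues of $A_0$ lying outside $\bar{S}$, then applies the zero-counting integral a second time over $\partial O'$; since $n_A(O)+n_A(O')$ is locally constant and equals $n$ at $A_0$, one gets $\spec(A)\subset O\cup O'$, and $O'\cap\bar{S}=\emptyset$ finishes. You instead invoke the spectral radius bound $\spec(A)\subset D(0,R)$ to replace the possibly unbounded set $\bar{S}\setminus O$ by the compact $K=(\bar{S}\setminus O)\cap\bar{D}(0,R)$, and then use continuity of $(A,z)\mapsto\chi_A(z)$ directly on $K$. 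Your route is slightly more elementary (one integral instead of two, no auxiliary domain), while the paper's two-domain trick is more self-contained in that it does not appeal to an external spectral radius estimate. Two cosmetic points: your phrase ``after shrinking $U_0$'' before the zero-counting integral is unnecessary, since $\partial O\subset\bar{S}\setminus O$ is already covered by your argument on $K$ combined with $\spec(A)\subset D(0,R)$; and you should take $U_0$ connected (e.g.\ a ball) to pass from ``locally constant'' to ``constant.''
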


\begin{proof}
%The assumptions imply $\spec(A_0)\cap \partial S = \emptyset$.
Define $O'$ as the union of all balls $B_r(\lambda) \subset \C\setminus\bar{S}$ around $\lambda \in \spec(A_0) \setminus \bar{S}$. Since $\spec(A_0)$ is finite we may decrease the radii and assume the balls are disjoint and $d(O,O')>0$ and then $O'$ is a domain.
Consider the lower semi-continuous function
\begin{equation}\label{varphidef}
	\varphi\colon \GL(n,\C) \to \R,\quad
	\varphi(A) = \sup_{\lambda \in \partial O\cup \partial O'} |\det(A-\lambda E_n)|.
\end{equation}
Since $\varphi(A_0)>0$ we find a connected open neighborhood $U_0$ with
\begin{equation}\label{choiceU0lemma}
	A_0 \in U_0 \subset \{A\in U \mid \varphi(A) > \varphi(A_0) / 2 > 0\}.
\end{equation}
The zero-counting integral
\begin{equation}\label{zero-count-lemma}
\frac{1}{2\pi i}\oint \frac{\chi'_A(\lambda)}{\chi_A(\lambda)} d\lambda
\end{equation}
is a holomorphic integer-valued function on $U_0$, hence constant. Applied to $\partial O$ we obtain 1.~unless $O=\emptyset$ in which case 1.~is trivial. Applied also to $\partial O'$ we see that for $A\in U_0$
\begin{equation}\label{lem-main-inclusion}
	\spec(A)\subset O\cup O'.
\end{equation}
Then $\spec(A)\cap \bar{S} \subset O$ since $O' \cap \bar{S} = \emptyset$ and with $O \cap \partial S=\emptyset$ we conclude 2. % Die andere Inklusion ist eh klar, da S eine Obermenge von O ist.
\end{proof}

For example, for $\emptyset = O \subset S$ with $\spec(A_0)\cap\bar{S}=\emptyset$ we get near $A_0$ that $\spec(A)\cap S = \emptyset$. More generally, this holds whenever $\spec(A_0)\cap O = \emptyset$.

\begin{lemma}\label{lemmaVO}
Let $O \subset \C$ be a domain. Let $U \subset \GL(n,\C)$ be open with 
\[
	\operatorname{spec}(A) \cap \partial O = \emptyset,\qquad \forall A \in U.
\]
Then $V_O \to U$ is a holomorphic sub vector bundle of the trivial bundle $\C^n$.
\end{lemma}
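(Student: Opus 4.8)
The plan is to realize the family $V_O$ as the image of a holomorphically varying family of spectral projections coming from the holomorphic functional calculus of a single operator. Since $\spec(A)\cap \partial O=\emptyset$ for all $A\in U$ and $O$ is a bounded domain with piecewise smooth boundary, the cycle $\partial O$ (positively oriented) is compact, avoids the spectrum of every $A\in U$, and has winding number $1$ about each eigenvalue lying in $O$ and $0$ about each eigenvalue lying outside $\bar O$. I would therefore define the Riesz projection
\[
	P(A)=\frac{1}{2\pi i}\oint_{\partial O}(\lambda E_n - A)^{-1}\,d\lambda \in \mathrm{End}(\C^n),\qquad A\in U.
\]
The first step is to record the standard facts from spectral theory: $P(A)$ is idempotent, its kernel is the sum of the generalized eigenspaces for eigenvalues outside $\bar O$, and its image is precisely $\bigoplus_{\kappa\in O}V_\kappa(A)=V_O(A)$. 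This is exactly the statement that the contour $\partial O$ separates $\spec(A)$ into its part inside $O$ and its part outside.

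Next I would establish holomorphy and constancy of the rank. The resolvent $(\lambda E_n-A)^{-1}$ is a rational, hence jointly holomorphic, function of $(A,\lambda)$ wherever $\lambda\notin\spec(A)$, so it is holomorphic on a neighbourhood of $\{A\}\times\partial O$ for each $A\in U$; integrating over the fixed compact contour $\partial O$ preserves holomorphy, and thus $A\mapsto P(A)$ is a holomorphic map $U\to\mathrm{End}(\C^n)$. For the rank I would use that the rank of a projection equals its trace, and that
\[
	\operatorname{tr} P(A)=\frac{1}{2\pi i}\oint_{\partial O}\operatorname{tr}(\lambda E_n-A)^{-1}\,d\lambda=\frac{1}{2\pi i}\oint_{\partial O}\frac{\chi_A'(\lambda)}{\chi_A(\lambda)}\,d\lambda=n_A(O),
\]
the zero-counting integral already used in Lemma~\ref{tech-lemma123}, which is integer valued and continuous, hence locally constant. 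So $\dim V_O(A)$ is constant on each connected component of $U$.

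Finally, to exhibit local triviality and holomorphy of the subbundle I would fix $A_0\in U$ and a basis $e_1,\dots,e_k$ of $V_O(A_0)=\im P(A_0)$, and consider the holomorphic sections $s_i(A)=P(A)e_i$. These lie in $V_O(A)=\im P(A)$ and reduce to the $e_i$ at $A_0$; since linear independence is an open condition and $\dim V_O(A)=k$ for $A$ near $A_0$, the $s_1(A),\dots,s_k(A)$ form a holomorphic frame for $V_O$ over a neighbourhood of $A_0$, which exhibits $V_O$ as a holomorphic subbundle of the trivial bundle $\C^n$. The least routine point — the main thing to nail down carefully — is the identification of $\im P(A)$ with $V_O(A)$, i.e. that the boundary of a domain with merely piecewise smooth boundary really does have winding number $1$ about the eigenvalues in $O$ and $0$ about those in the complement of $\bar O$; once this is in place, the remaining steps are the standard holomorphic functional calculus.
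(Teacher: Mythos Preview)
Your proposal is correct and follows essentially the same strategy as the paper: both arguments construct the spectral projection onto $V_O(A)$ via holomorphic functional calculus, observe that it varies holomorphically in $A$, use the zero-counting integral to pin down the rank as locally constant, and then trivialize by pushing a fixed basis of $V_O(A_0)$ through the projection. The only cosmetic differences are that the paper phrases the projector as functional calculus applied to the characteristic function $e=\mathbf{1}_O$ on an auxiliary set $O\cup O'$ (borrowed from the proof of Lemma~\ref{tech-lemma123}) rather than writing the Riesz contour integral over $\partial O$ directly, and it packages the frame $A\mapsto P(A)e_i$ as a single bundle map $\psi$; the content is identical.
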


\begin{proof}
It suffices to show this near each $A_0 \in U$ and to consider the case $O\neq \emptyset$. From the proof of the preceeding lemma for $O=S$ we get $O'$ satisfying \eqref{lem-main-inclusion} for all $A \in U_0 \subset U$. Since ${d(O,O')>0}$ the function $e$ on $O\cup O'$ defined by $e|_O = 1$, $e|_{O'}=0$ is holomorphic. Functional calculus gives us ${e_A \in M_n(\C)}$ depending holomorphically on $A \in U_0$. This is the projection onto the generalized eigenspaces $V_O(A)$ in the support of $e$. Consider
the holomorphic map
\[
	\psi\colon \left( U_0\times \im e_{A_0} \right) \oplus \left( U_0\times \ker e_{A_0} \right) \rightarrow U_0\times \C^n,\quad
	\psi(A,v)=\begin{cases}
	(A,e_A(v)) & v\in \im e_{A_0},\\
	(A,v) & v\in \ker e_{A_0}.
	\end{cases}
\]
Then $\psi|_{A_0}$ is the identity and $\psi$ remains invertible on fibers close to $A_0$. By restricting we get an isomorphism from the trivial bundle $U_0\times \im e_{A_0}$ to $\psi(U_0\times \im e_{A_0})$. By the previous lemma the number of eigenvalues near $A_0$ counted with multiplicity in $O$ is constant.  Since the multiplicity equals the dimension of the corresponding generalized eigenspace, the fiber dimensions in ${\psi(U_0\times \im e_{A_0}) \subset \im(e)}$ coincide, hence ${\psi(U_0\times \im e_{A_0}) = \im(e)}$ and so $\psi|_{U_0\times \im_{A_0}}$ is a trivialization of $\im(e)=V_O$ near $A_0$.
\end{proof}

%The following terminology will be convenient.
%
%\begin{definition}
%Let $\tau\colon Y^{[2]}\to Y^{[2]}$, $\tau(A,x,y)=(A,y,x)$. Define open subsets
%\begin{align*}
%Y^{[2]}_+&=\{(A,x,y) \in Y^{[2]}\mid x<y\},
%&Y^{[2]}_-&= \tau(Y^{[2]}_+).%\{(A,x,y) \in Y^{[2]}\mid y < x\},
%\end{align*}
%Let $Y_0^{[2]} = Y^{[2]}\setminus Y^{[2]}_+ \setminus Y^{[2]}_-$.
%\end{definition}

\begin{lemma}
Let $(A_0,x_0,y_0) \in Y^{[2]}$ and let $O \subset \C^-$ be a domain with
\begin{equation}\label{prop-assumption}
\spec(A_0)\cap \bar{O} = \spec(A_0)\cap S[x_0,y_0].
\end{equation}
Then in a neighborhood $W\subset Y^{[2]}$ of $(A_0,x_0,y_0)$ we have
\begin{equation}\label{VOVS}
	V_O(A) = V_{S[x,y]}(A),\qquad \forall (A,x,y)\in W.
\end{equation}
\end{lemma}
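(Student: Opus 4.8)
The plan is to reduce the subspace identity \eqref{VOVS} to a purely spectral statement and then to control it by the zero-counting technique already used in Lemmas~\ref{tech-lemma123} and~\ref{lemmaVO}. Since $V_S(A)=\bigoplus\{V_\kappa(A)\mid\kappa\in\spec(A)\cap S\}$ and the generalized eigenspaces for distinct eigenvalues are independent, $V_O(A)=V_{S[x,y]}(A)$ holds as soon as the two regions select the \emph{same} eigenvalues of $A$; so it suffices to produce a neighbourhood $W\subset Y^{[2]}$ of $(A_0,x_0,y_0)$ with $\spec(A)\cap O=\spec(A)\cap S[x,y]$ for all $(A,x,y)\in W$. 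First I would record the separation we start from. As $(A_0,x_0,y_0)\in Y^{[2]}$, neither $x_0$ nor $y_0$ is an eigenray, so $\spec(A_0)$ is disjoint from the two boundary rays of $S[x_0,y_0]$; and, as is needed for $V_O$ to be a holomorphic subbundle by Lemma~\ref{lemmaVO}, $\spec(A_0)\cap\partial O=\emptyset$. Together with \eqref{prop-assumption} this gives a clean partition $\spec(A_0)=\Lambda_{\mathrm{in}}\sqcup\Lambda_{\mathrm{out}}$, where $\Lambda_{\mathrm{in}}=\spec(A_0)\cap\bar O=\spec(A_0)\cap S[x_0,y_0]\subset O\cap S[x_0,y_0]$ and $\Lambda_{\mathrm{out}}\subset(\C\setminus\bar O)\cap(\C\setminus\overline{S[x_0,y_0]})$, both of these containments into \emph{open} sets.

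Next I would localize the spectrum with disks and shield the rays with wedges. Choose pairwise disjoint closed disks $\bar D_\kappa\subset\C^\times$ about each $\kappa\in\spec(A_0)$, small enough that $\bar D_\kappa\subset O\cap S[x_0,y_0]$ when $\kappa\in\Lambda_{\mathrm{in}}$ and $\bar D_\kappa\subset(\C\setminus\bar O)\cap(\C\setminus\overline{S[x_0,y_0]})$ when $\kappa\in\Lambda_{\mathrm{out}}$, which is possible by the openness just noted. Using that $\spec(A_0)$ avoids the rays through $x_0,y_0$, fix $\delta>0$ so small that the open angular wedges $W_{x_0},W_{y_0}$ of half-angle $\delta$ about these rays have closures disjoint from $\spec(A_0)$, and (shrinking the disks) from all $\bar D_\kappa$. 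Now apply the zero-counting Lemma~\ref{tech-lemma123} to each disk $D_\kappa$ as its own domain and superset, and apply the case $O=\emptyset$ noted after that lemma to each wedge $W_{x_0},W_{y_0}$. This yields a neighbourhood $U_0\ni A_0$ such that, for every $A\in U_0$, one has $\spec(A)\subset\bigsqcup_\kappa\bar D_\kappa$ with the correct multiplicity in each disk and $\spec(A)\cap(W_{x_0}\cup W_{y_0})=\emptyset$. Since each $\bar D_\kappa$ lies entirely inside or entirely outside both $O$ and $S[x_0,y_0]$ according to whether $\kappa\in\Lambda_{\mathrm{in}}$ or $\kappa\in\Lambda_{\mathrm{out}}$, it follows that
\[
	\spec(A)\cap O=\spec(A)\cap\!\!\bigcup_{\kappa\in\Lambda_{\mathrm{in}}}\!\!\bar D_\kappa=\spec(A)\cap S[x_0,y_0],\qquad A\in U_0.
\]

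It then remains to let the sector move. For $(x,y)$ close to $(x_0,y_0)$ the boundary rays of $S[x,y]$ lie inside the wedges, so the two sectors coincide away from them, $S[x,y]\setminus(W_{x_0}\cup W_{y_0})=S[x_0,y_0]\setminus(W_{x_0}\cup W_{y_0})$; as $\spec(A)$ avoids the wedges for $A\in U_0$, intersecting with $\spec(A)$ gives $\spec(A)\cap S[x,y]=\spec(A)\cap S[x_0,y_0]$. Combined with the display above, this yields $\spec(A)\cap O=\spec(A)\cap S[x,y]$ on $W=\bigl(U_0\times\{(x,y)\text{ near }(x_0,y_0)\}\bigr)\cap Y^{[2]}$, hence \eqref{VOVS}. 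I expect the main obstacle to be exactly this interaction between the \emph{fixed} domain $O$ and the \emph{moving} sector $S[x,y]$: the argument must forbid any eigenvalue from crossing a boundary ray as $(x,y)$ varies, and the eigenvalue-free wedges about the rays through $x_0,y_0$—which remain eigenvalue-free under small perturbations of $A$ by the zero-counting lemma—are precisely the device that renders the sector effectively constant near $(A_0,x_0,y_0)$.
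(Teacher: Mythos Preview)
Your argument is correct and follows the same overall architecture as the paper: isolate eigenvalue-free angular sectors about the rays through $x_0,y_0$, use zero-counting to stabilise the spectrum near $A_0$, and then observe that $S[x,y]$ and $S[x_0,y_0]$ coincide on the part of the plane that actually contains eigenvalues. Where you diverge is in the middle step. The paper applies Lemma~\ref{tech-lemma123} directly to the pair $O\subset O\cup S[x_0,y_0]$ to obtain the inclusion $\spec(A)\cap S[x_0,y_0]\subset\spec(A)\cap O$, and then separately to $S^{r,R}[x_0,y_0]\subset S[x_0,y_0]$ to match cardinalities, concluding equality. You instead trap each eigenvalue of $A_0$ in its own small disk lying entirely inside or entirely outside both regions, and use the fact that the multiplicities over all disks sum to $n$ to force $\spec(A)$ into the union of disks; the equality $\spec(A)\cap O=\spec(A)\cap S[x_0,y_0]$ is then immediate. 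Your route is more elementary and avoids checking the somewhat delicate hypothesis $\spec(A_0)\cap\overline{O\cup S[x_0,y_0]}\subset O$; the paper's route is more economical in that it invokes Lemma~\ref{tech-lemma123} on the regions of interest rather than on auxiliary disks. One small point: your justification that $\spec(A_0)\cap\partial O=\emptyset$ by appeal to Lemma~\ref{lemmaVO} is circular as stated, since that lemma is not among the hypotheses here; the paper simply asserts the same fact parenthetically. In practice $O$ is chosen freely in Definition~\ref{def-hol-structure}, so this is harmless, but strictly speaking both arguments are using it as an implicit standing assumption.
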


\begin{proof}

Since $0\notin \spec(A_0)$ is finite, there are sectors $x_0\in S(x_0^-, x_0^+)$, $y_0 \in S(y_0^-, y_0^+)$ in $\C^-$ containing no eigenvalues of $A_0$. Also $\spec(A_0)$ is contained in an annulus of radii $0<r<R$. Applying Lemma~\ref{tech-lemma123} to $\emptyset \subset S(x_0^-, x_0^+)$ and $\emptyset \subset S(y_0^-, y_0^+)$ we see that in neighborhood of $A_0$ we have
\[
	\spec(A)\cap S(x_0^-, x_0^+)=\emptyset,\quad
	\spec(A)\cap S(y_0^-, y_0^+)=\emptyset.
\]
Apply Lemma~\ref{tech-lemma123} again to $O\subset O\cup S[x_0,y_0]$ to get a possibly smaller neighborhood (note that our assumptions imply $\spec(A_0)\cap \partial O = \emptyset$) in which by Lemma~\ref{tech-lemma123}~2.
\begin{equation}\label{compare-sets}
	\spec(A) \cap S[x_0,y_0] \subset \spec(A)\cap O
\end{equation}
and $n_A(O) = n_{A_0}(O)=n_{A_0}(S[x_0,y_0])$. Passing to a smaller neighborhood, apply Lemma~\ref{tech-lemma123} to $S^{r,R}[x_0,y_0]\subset S[x_0,y_0]$. Then near $A_0$ we have $n_{A_0}(S[x_0,y_0]) = n_A(S[x_0,y_0])$. Hence (the cardinalities in) \eqref{compare-sets} agree. Set $W=U_0 \times S(x_0^-, x_0^+)\times S(y_0^-, y_0^+)$. For $(A,x,y) \in W$ we have $\spec(A)\cap S[x,y]=\spec(A)\cap S[x_0,y_0]=\spec(A)\cap O$ and \eqref{VOVS} follows.
\end{proof}

\begin{definition}\label{def-hol-structure}
Near $(A_0,x_0,y_0)$ with $x_0\geq y_0$ the complex structure is defined as follows. Pick $O\subset \C^-$ satisfying \eqref{prop-assumption}. Then $V_{S[x,y]}(A)=V_O(A)$ for all $(A,x,y)$ in a neighborhood $W$. By Lemma~\textup{\ref{lemmaVO}} the right hand side of the identification of Lemma~\textup{\ref{identific}}
\begin{equation}\label{Lholomorphic}
L|_W\cong \Lambda^\mathrm{top}(V_O)|_W
\end{equation}
is a holomorphic line bundle. Declare \eqref{Lholomorphic} to be a biholomorphism. This gives a well-defined complex structure on $L|_{\{A, x\geq y\}}$, using that \eqref{VOVS} is independent of the choice of $O$. Let $\tau(A,x,y)=(A,y,x)$. Declare also a biholomorphism
\begin{equation}\label{Lholomorphic-dual}
L|_{\{A,x\leq y\}}\cong \tau^*L_{\{A, x\geq y\}}^*.
\end{equation}
On a neighborhood of $(A_0,x_0,y_0)$ with $x_0$ and $y_0$ on the same ray, $L$ is $\Lambda^\mathrm{top}(\{0\})=\C$ or its dual (which are biholomorphic), so \eqref{Lholomorphic}, \eqref{Lholomorphic-dual} define the same complex structure there.
\end{definition}

\begin{proposition}\label{m-holomorphic}
The gerbe multiplication $m$ is holomorphic.
\end{proposition}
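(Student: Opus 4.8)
The plan is to check holomorphicity locally on $Y^{[3]}$ and to unwind $m$, through the local trivializations of Definition~\ref{def-hol-structure}, into the canonical external wedge and contraction maps of the holomorphic line bundles $\Lambda^{\mathrm{top}}(V_O)$ supplied by Lemma~\ref{lemmaVO}. Fix a point $(A_0,x_0,y_0,z_0)\in Y^{[3]}$. Since holomorphicity is local, I would pass to a connected neighborhood on which the order of $x,y,z$ induced by their arguments is constant; note that on $\C^-$, where all arguments lie in $(0,2\pi)$, the relation $<$ is a genuine \emph{total} order, so this makes sense. The essential subtlety---and the reason one cannot simply ``contract the middle factor $\lambda_A(y)\otimes\lambda_A(y)^*$''---is that the individual bundle $\lambda_A(y)=\Lambda^{\mathrm{top}}(V_{S(1,y)})$ is \emph{not} holomorphic: its sector $S(1,y)$ abuts the positive reals, across which an eigenvalue of $A$ may migrate. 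Only the differences $V_{S[x,y]}$, whose sectors lie in $\C^-$ away from the branch cut, furnish genuine holomorphic sub-bundles, so I must work throughout with the honest local structure of Definition~\ref{def-hol-structure}.

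For the main step, assume first that $x_0,y_0,z_0$ lie on three distinct rays and treat the ordering $x_0> y_0> z_0$. By Lemma~\ref{identific}(2) the three bundles identify, near the point, with $\Lambda^{\mathrm{top}}(V_{S[\cdot,\cdot]})$, and the lemma preceding Definition~\ref{def-hol-structure} (equation \eqref{VOVS}) lets me replace each $S[\cdot,\cdot]$ by bounded sectors $S^{r,R}$, which are domains since $\spec(A_0)$ lies in an annulus $r<|\cdot|<R$, so that Lemma~\ref{lemmaVO} applies. As $y$ is never an eigenray, equation \eqref{disj-property} gives the holomorphic splitting $V_{S(z,y)}\oplus V_{S(y,x)}=V_{S(z,x)}$, and under the resulting identifications $m$ becomes exactly the external wedge product
\[
\Lambda^{\mathrm{top}}(V_{S(y,x)})\otimes \Lambda^{\mathrm{top}}(V_{S(z,y)})\longrightarrow \Lambda^{\mathrm{top}}(V_{S(z,x)}),
\]
as recorded in Lemma~\ref{identific}(2). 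This is holomorphic: choosing holomorphic frames of the two sub-bundles produces a holomorphic frame of the sum, on which the wedge acts as the constant pairing.

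I would then dispatch the remaining five orderings by the same computation. In each case the complex structure attaches $\Lambda^{\mathrm{top}}(V_{S(\cdot,\cdot)})$ or its dual, according to \eqref{Lholomorphic}--\eqref{Lholomorphic-dual}, which make the duality isomorphism of Lemma~\ref{identific}(1) holomorphic; and \eqref{disj-property} again expresses $m$ as a composite of the external wedge, its dual, and the contraction $\Lambda^{\mathrm{top}}(V_{S(x,y)})\otimes\Lambda^{\mathrm{top}}(V_{S(x,y)})^*\to\C$, all holomorphic bundle maps. For instance $z_0>y_0>x_0$ yields the dual wedge, while a middle point between the other two yields a genuine contraction. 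The degenerate configurations, where two or three of $x_0,y_0,z_0$ share a ray, are covered by Lemma~\ref{identific}(3): there the bundles are canonically $\C$ and $m$ is the identity, and Definition~\ref{def-hol-structure} already guarantees that the two candidate complex structures agree on this locus, so no compatibility issue arises. These cases exhaust a neighborhood of every point of $Y^{[3]}$.

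The step I expect to be the main obstacle is the bookkeeping of the orderings together with the branch-cut issue: one must ensure that for \emph{every} configuration the sectors entering \eqref{disj-property} can be chosen inside $\C^-$ (so that Lemma~\ref{lemmaVO} applies), and that the contractions arising from the ``out of order'' cases are taken against the bundle $V_{S(x,y)}$ that actually carries a holomorphic structure, rather than against the non-holomorphic $\lambda_A(y)$. Once the sectors are pinned down in $\C^-$ via \eqref{VOVS}, each case collapses to the holomorphy of wedge and contraction, which is immediate.
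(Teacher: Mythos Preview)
Your proof is correct and follows essentially the same route as the paper: reduce to a local statement, treat the totally ordered case $x_0\geq y_0\geq z_0$ by identifying $m$ with the external wedge of the holomorphic bundles $\Lambda^{\mathrm{top}}(V_O)$, and then handle the remaining orderings by dualization. Your explicit discussion of why one cannot simply contract $\lambda_A(y)\otimes\lambda_A(y)^*$ (the branch-cut obstruction) is a helpful clarification absent from the paper, and your reduction of the out-of-order cases to wedge/contraction is slightly more direct than the paper's device of tensoring back up to recover the inverse of the known map, but the substance is the same.
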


\begin{proof}
The problem is local, so fix $(A_0,x_0,y_0,z_0)$. There are six cases to consider. Suppose $x_0\geq y_0\geq z_0$. Choose disjoint domains $O$ and $O'$ whose closures contain precisely those eigenvalues of $A_0$ that belong to $S[x_0,y_0]$ and $S[y_0,z_0]$, respectively. According to Definition~\ref{def-hol-structure}, the canonical identifications of $L$ with $\Lambda^\mathrm{top}(V_O)$ near $(A_0,x_0,y_0)$ and with $\Lambda^\mathrm{top}(V_{O'})$ near $(A_0,y_0,z_0)$ are biholomorphisms. Similarly $O''=O\cup O'$ is a domain with which $L$ is biholomorphic to $\Lambda^\mathrm{top}(V_{O''})$ near $(A_0,x_0,z_0)$. In the commutative diagram
\[\xymatrix{
\pr_{12}^*L \otimes \pr_{23}^*L \ar[d]\ar[r]^-m & \pr_{13}^*L\ar[d]\\
\Lambda^\mathrm{top}(V_O)\otimes \Lambda^\mathrm{top}(V_{O'}) \ar[r]^-{\wedge} & \Lambda^\mathrm{top}(V_{O''})
}\]
the vertical maps are therefore biholomorphisms (all bundles are restricted to a neighborhood of $(A_0,x_0,y_0,z_0)$). Because `$\wedge$' is also holomorphic it follows that $m$ is holomorphic. The remaining five cases are reduced to this one. For example, near $x_0 \geq z_0 \geq y_0$ using the biholomorphism \eqref{Lholomorphic-dual} the multiplication is identified with a fiberwise isomorphism
\begin{equation}\label{mult-case1}
	\pi_{12}^* L \otimes \pi_{32}^* L^* \to \pi_{13}^*L.
\end{equation}
Taking the tensor product of this map with $\id_{\pi_{32}^* L}$ gives the map
\[
	\pi_{12}^* L \otimes \to \pi_{13}^*L\otimes \pi_{32}^* L
\]
which is easily checked to be the inverse of $m$ considered above, which is already known to be holomorphic. It follows that \eqref{mult-case1} is also holomorphic.
\end{proof}

\begin{theorem}\label{main-thm-1}
$\scrG^\can=(Y,L,m)$ defined above in \eqref{pi-def}, \eqref{L-def}, \eqref{multi-def} is a holomorphic gerbe with $\mathrm{DD}(\scrG^\can)$ the canonical generator of $H^3(\GL(n,\C);\Z)$.
\end{theorem}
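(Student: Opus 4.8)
The plan is to separate the two assertions. That $\scrG^\can$ is a holomorphic gerbe in the sense of Definition~\ref{def:HolomorphicGerbe} requires only collecting what is already in place: the projection \eqref{pi-def} is a holomorphic submersion, the line bundle $L\to Y^{[2]}$ of \eqref{L-def} carries the holomorphic structure of Definition~\ref{def-hol-structure} (a genuine line bundle because its local models $\Lambda^{\mathrm{top}}(V_O)$ are holomorphic by Lemma~\ref{lemmaVO}), and the multiplication \eqref{multi-def} is holomorphic by Proposition~\ref{m-holomorphic} and associative by Lemma~\ref{lem:alg-gerbe}. So the first claim is immediate, and the content of the theorem is the cohomological computation.

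For that only the topological class $\mathrm{DD}^{\mathrm{top}}(\scrG^\can)\in H^3(\GL(n,\C);\Z)$ is at stake, so via the exponential map I may forget the holomorphic structure and argue with the underlying smooth gerbe. Since $\GL(n,\C)$ deformation retracts onto $\mathrm{U}(n)$ and $H^*(\mathrm{U}(n);\Z)$ is the torsion-free exterior algebra on odd generators, one has $H^3(\GL(n,\C);\Z)\cong\Z$ for $n\ge 2$; for $n=1$ the line $\lambda_a(x)$ is canonically $\C$, so $L$ is trivial and the statement is vacuous since $H^3(\C^\times;\Z)=0$. Because $H_3(\GL(n,\C);\Z)\cong\Z$ as well, with perfect integral pairing, it suffices to evaluate $\mathrm{DD}^{\mathrm{top}}$ on a generating $3$-cycle and check that the result is $\pm 1$.

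I would first reduce to $n=2$ by naturality. The block inclusion $j\colon \GL(2,\C)\hookrightarrow \GL(n,\C)$, $A\mapsto \mathrm{diag}(A,I_{n-2})$, appends the eigenvalue $1\in\R_{>0}$, which lies on the excluded positive ray and hence never belongs to an open sector $S(1,x)$ with $x\in\C^-$; by the definition of $V_{S(1,x)}$ the generalized eigenspaces are therefore unchanged, so $\lambda_{j(A)}(x)=\lambda_A(x)$ canonically and $j^*\scrG^\can_n\cong\scrG^\can_2$ as gerbes (the wedge description of $m$ in Lemma~\ref{identific} is respected). As $j^*$ is an isomorphism on $H^3$, the class $\mathrm{DD}^{\mathrm{top}}(\scrG^\can_n)$ is a generator as soon as $\mathrm{DD}^{\mathrm{top}}(\scrG^\can_2)$ is. To compute the latter I would put on the determinant bundles $\Lambda^{\mathrm{top}}(V_O)$ the Hermitian metrics induced by the standard metric on $\C^2$ and assemble from their Chern connections a (smooth) connection and curving on the smooth gerbe; any such choice exists, and its $3$-curvature $\rho\in\Omega^3(\GL(2,\C))$ is a closed real form whose class is, up to the standard normalization $\tfrac{1}{2\pi i}$, the image of $\mathrm{DD}^{\mathrm{top}}(\scrG^\can_2)$ in $H^3(\GL(2,\C);\C)$. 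I then evaluate $\tfrac{1}{2\pi i}\int_{S^3}\rho$ over the standard $\mathrm{SU}(2)=S^3\hookrightarrow\GL(2,\C)$.

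The main obstacle is exactly this final integral. Over $\mathrm{SU}(2)=S^3$ the conjugate eigenvalue pair sweeps out the group, and $\rho$ is concentrated along the loci where an eigenvalue crosses a ray; the computation reduces to bookkeeping how the determinant lines $\lambda_A(x)$ jump across these crossings (governed by \eqref{disj-property} and Lemma~\ref{identific}) and to matching the orientation conventions of Section~\ref{prelims-con} — counterclockwise rotation and the induced ordering — so that the normalized integral comes out to \emph{exactly} $\pm 1$ rather than a nontrivial multiple. This is the analogue, in the slit-plane and sector picture, of the curvature computation carried out for $\mathrm{U}(n)$ in \cite{MR2463811}, and the sign fixed by our conventions singles out the class as the canonical generator.
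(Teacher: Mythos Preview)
Your first paragraph is fine and matches the paper: the gerbe axioms are already in place, and only the Dixmier--Douady computation remains.

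From that point on your route diverges from the paper's and, more importantly, does not close. You reduce to $n=2$ by the block inclusion and then propose to compute $\tfrac{1}{2\pi i}\int_{S^3}\rho$ for some smooth connection and curving assembled from Chern connections on the $\Lambda^{\mathrm{top}}(V_O)$. But you neither construct that connective structure nor carry out the integral; you describe the computation only qualitatively (``bookkeeping how the determinant lines jump'', ``the sign fixed by our conventions'') and explicitly label it the ``main obstacle''. As written this is a plan, not a proof: nothing in the argument actually pins the answer to $\pm 1$ rather than some other integer. A referee could not accept it without the missing calculation, which is precisely the nontrivial content.

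The paper avoids this computation entirely. Instead of reducing to $\GL(2,\C)$ and integrating, it restricts along the inclusion $\incl\colon \mathrm{U}(n)\hookrightarrow \GL(n,\C)$ and checks, using Lemma~\ref{identific} and the case analysis in the proof of Proposition~\ref{m-holomorphic}, that $\incl^*\scrG^\can$ coincides on the nose with the basic gerbe $\scrG^{\mathrm{basic}}$ of Murray--Stevenson \cite{MR2463811}. Corollary~\ref{cor-mur-ste} then gives $\mathrm{DD}^{\mathrm{top}}(\scrG^{\mathrm{basic}})=\incl^*\mathrm{DD}^{\mathrm{top}}(\scrG^\can)$, and since $\incl^*$ is an isomorphism on $H^3$ and \cite{MR2463811} already identifies $\mathrm{DD}^{\mathrm{top}}(\scrG^{\mathrm{basic}})$ as the generator, one is done. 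The curvature integral you were headed toward is exactly what \cite{MR2463811} already did for $\mathrm{U}(n)$; the paper simply cites it rather than redoing it in the slit-plane picture. If you want to salvage your approach, the cleanest fix is to replace your $\GL(2,\C)$ reduction and direct integration by this comparison with \cite{MR2463811}.
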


Corollary~\textup{\ref{cor:DixmierDouadyG}} below will complete the proof of Theorem~\ref{thm:HolomorphicGerbe}.

\begin{proof}
It remains only to compute the Dixmier--Douady class. For this we will compare $\scrG^\can$ with the basic gerbe $\scrG^\mathrm{basic}=(Y_1, E, m_1)$ on $U(n)$ of Murray--Stevenson \cite{MR2463811} using Corollary~\ref{cor-mur-ste}. We first recall their definitions in our notation \cite[p.~7]{MR2463811}:
\[
	Y_1 = \{(A,z) \in U(n)\times S^1 \mid z \notin \spec(A)\cup \{1\}\}
\]
Their line bundle \cite[(3.1)]{MR2463811} is
\[
	E_{(x,y,A)}=  \Lambda^\mathrm{top}\left(V_{S(y,x)}(A)\right)
\]
over the set of $x>y$ with $S(y,x)\cap \spec(A)\neq \emptyset$. For $x,y \in S^1$ with $\spec(A)\cap S[x,y]=\emptyset$ they define $E_{(x,y,A)}=\C$. Finally, for $x<y$ with $S(x,y)\cap \spec(A)\neq \emptyset$ they define
\[
	E_{(x,y,A)}=E_{(y,x,A)}^*.
\]
In \cite[(3.7)]{MR2463811} the gerbe multiplication is the wedge product over $x > y > z$, extended to all of $Y_1^{[3]}$ by dualization. Let $i\colon U(n)\to \GL(n,\C)$ be the inclusion. Our definitions of $L$ and $m$ were designed to avoid cases, but note that by Lemma~\ref{identific}
\[
	i^*L \cong E
\]
and the gerbe multiplication is also given by the wedge product over $x\geq y \geq z$. As discussed in the proof of Proposition~\ref{m-holomorphic}, over other parts of $Y^{[3]}$ our gerbe multiplication is also given by dualization. Hence $\scrG^\mathrm{basic}=\incl^*\scrG^\can$, so from Corollary~\ref{cor-mur-ste} $\mathrm{DD}(\scrG^\mathrm{basic}) = \incl^*\mathrm{DD}(\scrG^\mathrm{can}) $. Since $\incl^*\colon H^3(\GL(n,\C);\Z) \cong H^3(U(n);\Z)$, the claim follows.
\end{proof}

\section{Cohomological Theory on Stein Manifolds}\label{sec:CohomStein}

In this section we collect a number of facts for the Deligne cohomology of Stein manifolds. These will be needed in the proof of Theorem~\ref{theorem:MainMultiplicativity} in the next section.

\subsection{Stein manifolds}

%We recall that a domain $U\subset \C^n$ is a Stein manifold if for every boundary point $z\in \partial U$ there exists a holomorphic function on $U$ that cannot be continued analytically across $z$ (see~\cite{MR2029201}).

\begin{definition}
A complex Lie group $G$ is a \emph{Stein group} if the underlying manifold is a Stein manifold \textup(see~\textup{\cite[p.~136]{MR2029201}}\textup).
\end{definition}

$\GL(n,\C)$ and any closed complex subgroup of $\GL(n,\C)$ is a Stein group. Any semi-simple connected or simply-connected solvable complex Lie group is Stein.

%We shall need the following fundamental theorem {\cite[p.~51]{MR0064154}}:
%
%\begin{cartanb}
%For a coherent sheaf $\scrF$ on a Stein manifold $X$ we have
%\begin{equation}\label{eqn:CartanB}
%H^p(X,\scrF)=0,\qquad p>0.
%\end{equation}
%\end{cartanb}
%
%For example, sheafs of holomorphic sections of a vector bundle are coherent.

\begin{proposition}\label{Prop:isomorphicMor}
Let $X$ be a contractible Stein manifold \textup(for example, a polycylinder\textup) and let $\scrG,\scrG'$ be gerbes on $X$. Then any two morphisms $\scrG\rightarrow \scrG'$ are isomorphic, meaning we find a transformation between them.
\end{proposition}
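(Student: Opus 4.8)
The plan is to reduce the existence of a transformation to the triviality of a single holomorphic line bundle on $X$, and then to observe that such a bundle is automatically trivial on a contractible Stein manifold. Write $F=(Z,R,\phi)$ and $\tilde F=(\tilde Z,\tilde R,\tilde\phi)$. First I would form the fiber product $W = Z\times_{Y\times_X Y'}\tilde Z$ with its two projections $\kappa\colon W\to Z$ and $\tilde\kappa\colon W\to \tilde Z$. Since $Z,\tilde Z\to Y\times_X Y'$ and $Y\times_X Y'\to X$ are surjective submersions, so is the composite $p\colon W\to X$. Taking $W$ itself as the submersion of a prospective transformation, a transformation $\alpha\colon F\Rightarrow\tilde F$ is precisely a bundle isomorphism $\psi\colon \kappa^*R\to\tilde\kappa^*\tilde R$ over $W$ making the defining square commute; equivalently, a nowhere-vanishing holomorphic section of the line bundle $K=\kappa^*R\otimes\tilde\kappa^*\tilde R^*$ subject to that same compatibility.

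Next I would construct descent data for $K$ along $p$. For $(w_1,w_2)\in W^{[2]}$ put $z_k=\kappa(w_k)$, $\tilde z_k=\tilde\kappa(w_k)$ and let $(y_k,y_k')$ be their common image in $Y\times_X Y'$. The isomorphism $\phi$ yields $R_{z_1}\otimes R_{z_2}^*\cong L_{y_1,y_2}\otimes (L'_{y_1',y_2'})^*$, and $\tilde\phi$ yields the same identification for $\tilde R$; combining them gives a canonical isomorphism $K_{w_1}\cong K_{w_2}$. The coherence (hexagon) diagrams in the definitions of $F$ and $\tilde F$ force these to satisfy the cocycle condition on $W^{[3]}$, so they are effective descent data along the surjective submersion $p$, whence $K\cong p^*M$ for a holomorphic line bundle $M$ on $X$. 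This is exactly the assertion, recalled before the proposition, that two morphisms differ by a bundle pulled back from the base. Unwinding the square in the transformation axiom shows that it is equivalent to requiring $\psi$ to be flat for this descent datum, i.e.\ to be the pullback of a nowhere-vanishing section of $M$.

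It remains to see that $M$ is trivial. Because $X$ is Stein, Cartan's Theorem~B gives $H^1(X,\O)=0$; because $X$ is contractible, $H^2(X,\Z)=0$. The exponential sequence $0\to\Z\to\O\to\O^*\to 0$ then yields $\mathrm{Pic}(X)=H^1(X,\O^*)=0$, so $M$ admits a nowhere-vanishing holomorphic section. Pulling this back along $p$ produces a trivialization of $K$ compatible with the descent data, i.e.\ the desired $\psi$, and hence a transformation $\alpha\colon F\Rightarrow\tilde F$.

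The main obstacle is the descent step: one must verify that the pointwise isomorphisms $K_{w_1}\cong K_{w_2}$ satisfy the cocycle condition and that descent along the surjective submersion $p$ is effective for holomorphic line bundles. Both are routine given that $p$ admits local holomorphic sections and given the coherence axioms for $F$ and $\tilde F$, so the genuinely decisive input is the vanishing of $\mathrm{Pic}(X)$, which is where the Stein and contractibility hypotheses enter and which converts the abstract comparison bundle $M$ into an actual transformation.
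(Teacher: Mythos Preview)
Your proof is correct and follows essentially the same strategy as the paper: reduce the existence of a transformation to the triviality of the comparison line bundle on $X$ obtained by descent (the paper records this descent fact just before the proposition and simply cites it), and then show that bundle is trivial. The only minor difference is that the paper invokes the Grauert--Oka theorem for the triviality, whereas you use Cartan's Theorem~B together with the exponential sequence; both arguments are valid on a contractible Stein manifold.
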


\begin{proof}
Two stable morphisms differ by a line bundle on $X$, which in our case is a holomorphic line bundle $L$. But $L$ is holomorphically trivial, by the Grauert--Oka Theorem~\cite{MR0098199}, and a trivialization defines a transformation.
\end{proof}

\subsection{Exponential Sequence}
The long exact sequence induced by
\[
	0 \rightarrow \Omega^{ < p}_X [-1] \rightarrow \Z(p)_D \rightarrow \Z \rightarrow 0
\]
with the fact $H^*(\Omega^{<p}) = H^*(X;\C)$ for $*< p-1$ shows that $H^q(\Z(p)_D)$ sits inside a Bockstein sequence. By the Five Lemma we get
\begin{equation}\label{flatCZ}
	H^q(X; \Z(p)_D) \cong H^{q-1}(X;\C/\Z),\qquad 0<q<p-1
\end{equation}
The case $q\geq p$ is more difficult and leads to the \emph{exponential sequence}
\[
	\rightarrow H^{q-1}(X;\Z) \rightarrow H^{q-1}(\Omega^{<p}) \rightarrow H^q(\Z(p)_D) \rightarrow H^q(X;\Z) \rightarrow H^q(\Omega^{<p}) \rightarrow
\]
If $X$ is a Stein manifold and $q>p>0$ then $H^q(\Omega^{<p}) = 0$, by Cartan's~Theorem~B \cite[p.~51]{MR0064154}. Putting this into the exponential sequence gives
\begin{equation}\label{Eqn:DeligneTopological}
	H^q(\Z(p)_D) \cong H^q(X;\Z),\qquad q>p.
\end{equation}

\begin{corollary}\label{cor:DixmierDouadyG}
The class Dixmier--Douady class $\mathrm{DD}(\scrG^\can)$ of the canonical gerbe generates $H^3(\GL(n,\C), \Z(1)_D) \cong H^3(\GL(n,\C), \Z) = \Z$.
\end{corollary}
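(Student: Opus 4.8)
The plan is to deduce this purely formally from equation~\eqref{Eqn:DeligneTopological} together with the topological computation already carried out in Theorem~\ref{main-thm-1}. The point is that all the genuine geometry---identifying the class with the Murray--Stevenson basic gerbe on $U(n)$ and invoking Corollary~\ref{cor-mur-ste}---has been done, and what remains is only to promote the \emph{topological} statement to the \emph{Deligne-cohomological} one.

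First I would apply \eqref{Eqn:DeligneTopological} with $q=3$ and $p=1$. Since $\GL(n,\C)$ is a Stein group and $q=3>p=1>0$, that isomorphism reads $H^3(\GL(n,\C),\Z(1)_D)\cong H^3(\GL(n,\C);\Z)$. To pin down the right-hand side, I would use that $\GL(n,\C)$ deformation retracts onto $U(n)$ (polar decomposition), so that $H^3(\GL(n,\C);\Z)\cong H^3(U(n);\Z)\cong \Z$; this is the source of the identification $\cong \Z$ in the statement and fixes the canonical generator.

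The key observation is that the isomorphism \eqref{Eqn:DeligneTopological} in this range is \emph{induced by the map $\Z(p)_D\rightarrow \Z$} appearing in the short exact sequence $0\rightarrow \Omega^{<p}_X[-1]\rightarrow \Z(p)_D\rightarrow \Z\rightarrow 0$ that defines the exponential sequence. By construction this is exactly the exponential (comparison) map sending the Deligne Dixmier--Douady class to the topological one, $\mathrm{DD}(\scrG^\can)\mapsto \mathrm{DD}^\mathrm{top}(\scrG^\can)$. Theorem~\ref{main-thm-1} asserts that $\mathrm{DD}^\mathrm{top}(\scrG^\can)$ is the canonical generator of $H^3(\GL(n,\C);\Z)$; since the comparison map is an isomorphism here, $\mathrm{DD}(\scrG^\can)$ must itself be a generator of $H^3(\GL(n,\C),\Z(1)_D)$, which is the claim.

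I do not expect a serious obstacle here: the only step requiring care is the compatibility in the last paragraph, namely that the abstract isomorphism of \eqref{Eqn:DeligneTopological} is realized by the very map along which the Deligne class projects to the topological class. This is immediate from the way the exponential sequence was built out of $0\rightarrow \Omega^{<p}_X[-1]\rightarrow \Z(p)_D\rightarrow \Z\rightarrow 0$, so the argument is essentially bookkeeping once Theorem~\ref{main-thm-1} is in hand.
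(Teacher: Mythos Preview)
Your proposal is correct and is exactly the argument the paper intends: the corollary is stated immediately after \eqref{Eqn:DeligneTopological} with no separate proof, precisely because it follows by applying that isomorphism with $q=3$, $p=1$ and invoking Theorem~\ref{main-thm-1}. Your added remark that the isomorphism \eqref{Eqn:DeligneTopological} is induced by the map $\Z(p)_D\to\Z$ from the defining short exact sequence, hence carries $\mathrm{DD}(\scrG^\can)$ to $\mathrm{DD}^\mathrm{top}(\scrG^\can)$, is the one point worth spelling out and is handled correctly.
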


Since $H^3(\GL(n,\C), \Z(2)_D) \cong H^3(\GL(n,\C), \Z(1)_D)$ there is a unique holomorphic connection on $\scrG^\can$, up to stable isomorphisms with connection. An important point is that this connection may be constructed explicitly by $\C$-linear projection $\C^n \rightarrow V_O(A)$ onto the eigenspace bundles.

\section{Multiplicative Structure}\label{sec:MultiplicativeStructure}

The existence of a multiplicative structure depends only on the stable isomorphism class of the gerbe and is therefore a cohomological problem.

\begin{definition}\label{def:mult-gerbe}
Let $G$ be complex Lie group with product $\mu$.
A \emph{multiplicative holomorphic gerbe} on $G$ consists of a holomorphic gerbe $\scrG$ on $G$, a morphism $M\colon \pr_1^*\scrG \otimes \pr_2^*\scrG \rightarrow \mu^*\scrG$, and a \emph{transformation} $\alpha\colon M\circ(M\otimes\id)\Rightarrow M\circ(\id\otimes M)$. 
%%
%% PICTURE OF THE TRANSFORMATION \ALPHA
% as in the diagram
%\begin{equation*}
%\begin{tikzpicture}[baseline=(current  bounding  box.center)]
%\matrix (m) [matrix of math nodes, column sep=4em, row sep=0em]
%{
%	& \mu^*\scrG\otimes \pr_3^*\scrG & (\mu\times\id)^*\mu^* \scrG\\
%	\pr_1^*\scrG \otimes \pr_2^*\scrG \otimes \pr_3^*\scrG &&\\
%	& \pr_1^*\scrG\otimes\mu^*\scrG & (\id\times\mu)^*\mu^* \scrG\\
%};
%\draw[->, font=\scriptsize] (m-2-1) -- node [above] {$M\otimes\id$} (m-1-2);
%\draw[->, font=\scriptsize] (m-2-1) -- node [below] {$\id\otimes M$} (m-3-2);
%\draw[->, font=\scriptsize] (m-1-2) -- node [above] {$M$} (m-1-3);
%\draw[->, font=\scriptsize] (m-3-2) -- node [below] {$M$} (m-3-3);
%
%\draw[-implies,double equal sign distance, font=\scriptsize] (m-1-2) -- node [right] {$\alpha$} (m-3-2);
%
%
%\draw[double equal sign distance] (m-1-3) -- (m-3-3);
%\end{tikzpicture}
%\end{equation*}
The transformation $\alpha$ should fit into the usual coherence pentagon \textup{\cite[p.~47]{MR2610397}}.
% The condition that the curving be \Delta-closed from Waldorf is implicit in \alpha being a transformation of morphisms with connection.
A \emph{connection} on a multiplicative gerbe consists of connections on $\scrG$ and $M$ so that $\alpha$ is compatible with the connections.
\end{definition}

In general, there are obstructions to finding a multiplicative structure on a given gerbe. For Stein groups, Theorem~\ref{theorem:MainMultiplicativity}, which we shall prove in this section, asserts that this obstruction for \emph{holomorphic} multiplicative structures reduces to a \emph{topological} problem.

\subsection{Simplicial spaces}

A simplicial space \cite{MR0498552} is a functor
\[
X\colon \Delta^\mathrm{op} \rightarrow \mathbf{Top}.
\]
For example, the constant simplicial space has $X_n=X$ for a fixed space $X$ and all faces and degeneracies are the identity. For a topological group $G$, let $BG_\bullet$ denote the nerve of $G$ with $n$-simplices $BG_n = G^n$. There is a simplicial map $EG_\bullet \rightarrow BG_\bullet$ whose fibers are the constant simplicial spaces $G$, see~\cite{MR0232393}.

\begin{definition}
The \emph{\v{C}ech complex} of a simplicial space $X_\bullet$ with coefficients in a sheaf $\mathcal{F}$ is the total complex $\check{C}^*(X_\bullet;\mathcal{F})$ of the double \v{C}ech complex $C^{pq} = \check{C}^q(X_p, \mathcal{F})$. Given a basepoint $\mathrm{pt}\in X_0$, the \emph{reduced \v{C}ech complex} is $\check{C}^*(X_\bullet; \mathrm{pt};\mathcal{F})=\check{C}^*(X_\bullet;\mathcal{F})/\check{C}^*(\mathrm{pt}_\bullet;\mathcal{F})$.
\end{definition}

On constant simplicial spaces one recovers usual sheaf cohomology. Similarly, for a complex of sheaves $\mathcal{F}^*$ one defines $H(X_\bullet,\mathcal{F}^*)$ using the \v{C}ech hypercomplex~\cite[p.~28]{MR2362847}. From~\cite{MR0232393} we recall that the simplicial cohomology $H(BG_\bullet; \Z)$ computes the cohomology of the classifying space $BG=|BG_\bullet|$ of a Lie group $G$.

\subsection{Multiplicative Extensions}

For a double complex let $F^p$ denote the $p$-th vertical filtration $C^{iq}, i\geq p$. We have a short exact sequence of \v{C}ech cochain complexes
\begin{equation}\label{KES124}
	0\rightarrow F^2 \rightarrow \check{C}^*(BG_\bullet, \mathrm{pt}; \mathcal{F}) \xrightarrow{\tau} \frac{\check{C}^*(BG_\bullet, \mathrm{pt}; \mathcal{F})}{F^2} \rightarrow 0
\end{equation}
Identifying the rightmost term with $\check{C}^{*-1}(G,\mathcal{F})$, the map $\tau$ simply collapses all but the second column of the double complex. In cohomology $\tau$ induces the transgression map, see~\cite[Lemma~2.9]{MR2610397}. Using Lemma~\ref{lem:IteratedCover}, a gerbe with holomorphic connection may be described, up to isomorphism, by a cocycle in $\check{C}^3(G, \Z(2)_D)$. The data for the morphism $M$ and transformation $\alpha$ in Definition~\ref{def:mult-gerbe} corresponds exactly to an extension to a cocycle of the double complex $\check{C}^4(BG_\bullet,\mathrm{pt}; \Z(2)_D)$.

To prove Theorem~\ref{theorem:MainMultiplicativity} it remains therefore only to show:

\begin{lemma}\label{lem:forget-connect}
The map $H^*(BG_\bullet, \Z(2)_D) \rightarrow H^*(BG;\Z)$ is an isomorphism \textup($*>0$\textup).
\end{lemma}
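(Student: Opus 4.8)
The plan is to reduce the statement to a vanishing theorem by means of the defining triangle of the Deligne complex, and then to exploit the Stein condition level-wise. Recall the short exact sequence of sheaf complexes $0\to \Omega^{<2}[-1]\to \Z(2)_D\to \Z\to 0$ already used for the exponential sequence, where $\Omega^{<2}=[\O\xrightarrow{d}\Omega^1]$ is the stupidly truncated holomorphic de Rham complex. Applying the simplicial \v{C}ech hypercohomology of $BG_\bullet$, an exact functor on short exact sequences of sheaf complexes, produces the long exact sequence
\begin{align*}
\cdots \to H^{q-1}(BG_\bullet,\Omega^{<2}) \to H^{q}(BG_\bullet,\Z(2)_D) \to H^{q}(BG_\bullet,\Z) \to H^{q}(BG_\bullet,\Omega^{<2}) \to \cdots .
\end{align*}
Since $H^{q}(BG_\bullet,\Z)\cong H^{q}(BG;\Z)$ by \cite{MR0232393}, the map of the Lemma is precisely the middle arrow, and it is an isomorphism for all $q>0$ as soon as $H^{q}(BG_\bullet,\Omega^{<2})=0$ for $q>0$. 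I would carry this out for the reduced complexes $\check C^*(BG_\bullet,\mathrm{pt};-)$ of this section, which removes both the constant $\C$ in degree $0$ and the spurious degree-one contribution $\C^*$ of the base point; restricted to $*>0$ the reduced and unreduced $\Z$-coefficient groups agree.

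Second, I would compute $H^{*}(BG_\bullet,\Omega^{<2})$ from the first-quadrant spectral sequence of the simplicial double complex, $E_1^{r,q}=H^{q}(G^{r},\Omega^{<2})$, with the alternating-face simplicial differential in the $r$-direction. Here the Stein hypothesis is decisive: every level $BG_r=G^{r}$ is a product of Stein groups, hence Stein, and $\O,\Omega^1$ are coherent, so Cartan's Theorem~B kills all higher sheaf cohomology. Thus $H^{q}(G^{r},\Omega^{<2})$ is just the cohomology of the two-term global complex $[\O(G^{r})\xrightarrow{d}\Omega^1(G^{r})]$: it equals $\C$ for $q=0$, the space $\Omega^1(G^{r})/d\O(G^{r})$ for $q=1$, and $0$ for $q\ge 2$. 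The $q=0$ row carries the simplicial differential of the constant sheaf and, after passing to the reduced complex, contributes nothing.

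The crux, and the step I expect to be the main obstacle, is the vanishing of the $q=1$ row, i.e.\ exactness of the simplicial cochain complex $r\mapsto \Omega^1(G^{r})/d\O(G^{r})$. One cannot treat $\O$ and $\Omega^1$ separately: already for $G=\C$ the holomorphic character $g\mapsto g$ defines a nonzero class in $H^1(BG_\bullet,\O)$, and it is exactly the de Rham differential built into $\Omega^{<2}$ that annihilates it. To organise this cancellation I would invoke the holomorphic Poincar\'e Lemma, identifying the full holomorphic de Rham complex $\Omega^\bullet$ with the constant sheaf $\C$, so that the stupid-filtration triangle $\Omega^{\ge 2}\to\Omega^\bullet\to\Omega^{<2}$ relates $H^{*}(BG_\bullet,\Omega^{<2})$ to the known $H^{*}(BG;\C)$ and to $H^{*}(BG_\bullet,\Omega^{\ge 2})$. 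The remaining positive-degree vanishing I would settle using the contractibility encoded in the universal bundle: the extra degeneracy of $EG_\bullet\to BG_\bullet$ supplies, on each Stein level, a cosimplicial contracting homotopy for the global-sections complexes, forcing the higher cohomology to collapse. Assembling the spectral sequence then leaves $E_2$ concentrated at the origin, so $H^{q}(BG_\bullet,\Omega^{<2})=0$ for $q>0$ and the Lemma follows. I would flag honestly that for a general Stein group the bookkeeping in this last step, where $H^1(G^{r};\C)$ need not vanish, is the delicate point and is where the real work lies.
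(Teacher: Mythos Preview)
Your setup coincides with the paper's: both use the triangle $\Omega^{<2}[-1]\to\Z(2)_D\to\Z$ to reduce to the vanishing of $H^{>0}(BG_\bullet,\Omega^{<2})$, both run the simplicial spectral sequence $E_1^{p,q}=H^q(G^p,\Omega^{<2})$, and both invoke Cartan~B to see that only the rows $q=0,1$ survive, with the $q=0$ row contributing nothing past degree zero. The divergence, and the gap in your proposal, lies entirely in the treatment of the $q=1$ row.

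The paper does not pass through $\Omega^{\ge2}$ or $EG_\bullet$. It identifies $E_1^{p,1}$ with $H^1(G^p;\C)$ and computes the simplicial differential $\delta=\sum_i(-1)^id_i^*$ by hand. The decisive input is that $\mu^*\colon H^1(G;\C)\to H^1(G\times G;\C)\cong H^1(G;\C)^{\oplus2}$ is the diagonal $x\mapsto(x,x)$, because multiplication on a topological group induces addition on $\pi_1$. From this one writes $\delta$ on $H^1(G)^{\oplus p}$ as an explicit matrix and checks directly that the bar-type complex $0\to H^1(G)\to H^1(G)^{\oplus2}\to H^1(G)^{\oplus3}\to\cdots$ is exact. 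So the step you flagged as ``where the real work lies'' is handled in the paper by a concrete algebraic calculation exploiting the primitivity of degree-one cohomology, not by an abstract homotopy.

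Your proposed finish does not work as written. The extra degeneracy is a feature of $EG_\bullet$, where it makes $p\mapsto\Gamma(EG_p,\mathcal{F})$ acyclic for any $\mathcal{F}$; it is \emph{not} a feature of $BG_\bullet$, and there is no cosimplicial contracting homotopy for $p\mapsto\Gamma(G^p,\Omega^j)$. If there were, the same argument with $\mathcal{F}=\C$ would force $H^{>0}(BG;\C)=0$, which already fails for $G=\GL(n,\C)$. The stupid-filtration triangle $\Omega^{\ge2}\to\Omega^\bullet\to\Omega^{<2}$ is a legitimate manoeuvre, but it merely trades one unknown for another: you would then need to show that $H^q(BG;\C)\to H^{q+1}(BG_\bullet,\Omega^{\ge2})$ is an isomorphism for $q>0$, and nothing in your outline addresses that. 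The missing idea is the paper's: use the Hopf structure on $H^1$ to obtain explicit acyclicity of the $q=1$ row.
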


\begin{proof}
By the long exact sequence induced by $0\rightarrow \Omega^{<2}[-1] \rightarrow \Z(2)_D \rightarrow \Z \rightarrow 0$ it suffices to show $H^*(BG_\bullet, \Omega^{<2}) = 0$. To do this, we consider the spectral sequence for simplicial spaces $E_1^{pq} = H^q(BG_p, \Omega^{<2}) \Rightarrow H^{p+q}(BG_\bullet; \Omega^{<2})$.

From Cartan's Theorem B {\cite[p.~51]{MR0064154}} and the sheaf hypercohomology spectral sequence we deduce for any Stein manifold $X$ that the space $H^*(X, \Omega^{<2})$ is $H^0(X;\C)$ for $*=0$, $H^1(X;\C)$ for $*=1$, and zero else.

It follows that $E_1^{pq}=0$ unless $q=0,1$. The first row $E_1^{p0}=H^0(G^p;\C)$ is given by $\C\xrightarrow{0} \C \xrightarrow{1} \C \rightarrow \cdots$, so exact apart from the first term. The second row $E_1^{p1}$ reads
\begin{equation}\label{differentials-exact}
0\rightarrow H^1(G;\C) \xrightarrow{\delta} H^1(G^2;\C) \xrightarrow{\delta} H^1(G^3;\C) \xrightarrow{\delta} \cdots
\end{equation}
where the maps are $\delta=\sum_i (-1)^i d_i^*$ induced by the face maps $d_i$ of the nerve. Identify $H^1(G^n)=H^1(G)^{\oplus n}$. Recall that for any topological group $(\pi_1(G,1),+)$ is abelian and
that $\pi_1(\mu)$ is the sum. It follows that $\mu^* \colon H^1(G) \to H^1(G)^{\oplus 2}$ is $\mu^*(x)=(x,x)$. From this one computes that the differentials $\delta\colon H^1(G)^{\oplus n} \to H^1(G)^{\oplus(n+1)}$ in \eqref{differentials-exact} are
\[
	\delta(x_1,\ldots,x_n) = \begin{cases}
	n\text{ even}:&	(-x_1,0,x_2-x_3,0,\ldots,x_{n-2}-x_{n-1},0,x_n),\\
	n\text{ odd}:&	(0,x_2,x_2,x_4,x_4,\ldots, x_{n-1},x_{n-1},0).
	\end{cases}
\]
Hence $E_1^{p1}$ is exact. We conclude $E_2^{pq}=0$ for $p+q>0$, whence the result.
\end{proof}

\begin{remark}
Lemma~\ref{lem:forget-connect} holds also for semisimple complex Lie groups \cite[Theorem~5.11]{MR1291698}. This relies on facts for the Hodge filtration established by Deligne \cite{MR0498552}.
\end{remark}

For $\mathcal{F}=\Z(2)_D$ the sequence \eqref{KES124} induces in cohomology the long exact sequence
\[
	\cdots \to H^4(BG;\Z(2)_D) \xrightarrow{\tau} H^3(G;\Z(2)_D) \to H^3(F^2) \to \cdots 
\]

\begin{definition}
A choice of preimage of $\mathrm{DD}(\scrG)\in H^3(G;\Z(2)_D)$ under the transgression map $\tau$ is the \emph{multiplicative class} $\lambda(\scrG) \in H^4(BG_\bullet;\Z(2)_D)$ of the multiplicative gerbe with holomorphic connection \textup(see~\textup{\cite{MR2610397}} in the smooth category\textup).
\end{definition}

From the proof above, it is clear that the multiplicative class determines the multiplicative structure, up to (multiplicative) isomorphism, see~\cite{MR2610397}. From Theorem~\ref{theorem:MainMultiplicativity} we now deduce:

\begin{corollary}\label{GLNcorollary}
For $G=\GL(n,\C)$ every holomorphic gerbe with connection admits a multiplicative structure.\end{corollary}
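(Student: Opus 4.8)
The plan is to apply Theorem~\ref{theorem:MainMultiplicativity} directly. Since $\GL(n,\C)$ is a Stein group, that theorem reduces the existence of a holomorphic multiplicative structure with connection on a gerbe $\scrG$ with holomorphic connection to the purely topological condition that $\mathrm{DD}(\scrG)\in H^3(\GL(n,\C);\Z)$ lie in the image of the transgression $\tau\colon H^4(BG;\Z)\to H^3(G;\Z)$, where $G=\GL(n,\C)$. It therefore suffices to prove that $\tau$ is surjective: then \emph{every} class $\mathrm{DD}(\scrG)$ automatically lies in its image, and the Corollary follows.

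First I would reduce to a compact group. The group $\GL(n,\C)$ deformation retracts onto its maximal compact subgroup $\mathrm{U}(n)$, so $H^*(G;\Z)\cong H^*(\mathrm{U}(n);\Z)$ and $BG\simeq B\mathrm{U}(n)$, compatibly with $\tau$. For $n=1$ the group is homotopy equivalent to $S^1$, so $H^3=0$ and the statement is vacuous; hence assume $n\geq 2$. Next I would invoke the classical cohomology of these spaces: $H^*(B\mathrm{U}(n);\Z)=\Z[c_1,\dots,c_n]$ is polynomial on the Chern classes, while $H^*(\mathrm{U}(n);\Z)=\Lambda_\Z(x_1,x_3,\dots,x_{2n-1})$ is exterior on generators $x_{2i-1}$ of degree $2i-1$. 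In particular $H^3(\mathrm{U}(n);\Z)=\Z x_3$ is infinite cyclic (consistent with Corollary~\ref{cor:DixmierDouadyG}) and $c_2\in H^4(B\mathrm{U}(n);\Z)$.

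The heart of the argument is the behaviour of the transgression for the universal bundle $\mathrm{U}(n)\to E\mathrm{U}(n)\to B\mathrm{U}(n)$. Because the total space is contractible, the Serre spectral sequence forces the transgression differential to carry the exterior generator $x_{2i-1}$ to the Chern class $c_i$; in particular $x_3\mapsto c_2$. The map $\tau$ appearing in Theorem~\ref{theorem:MainMultiplicativity} is the cohomology suspension, which in these degrees is the inverse isomorphism, so $\tau(c_2)=\pm x_3$ is a \emph{generator} of $H^3(\mathrm{U}(n);\Z)$. The main obstacle is precisely pinning down that $c_2$ transgresses to a generator rather than to a proper multiple; this is the only genuine input, and it is a standard fact about the universal $\mathrm{U}(n)$-bundle. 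Consequently $\tau$ is surjective. Finally, Lemma~\ref{lem:forget-connect} identifies the Deligne transgression with this topological one, so no further analytic information is needed, and combining the surjectivity of $\tau$ with Theorem~\ref{theorem:MainMultiplicativity} yields the Corollary.
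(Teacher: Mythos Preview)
Your proposal is correct and follows essentially the same route as the paper's own proof: reduce via Theorem~\ref{theorem:MainMultiplicativity} to showing that the topological transgression $\tau\colon H^4(BG;\Z)\to H^3(G;\Z)$ is surjective, and then verify this by the classical fact that $c_2$ transgresses to a generator of $H^3(\mathrm{U}(n);\Z)\cong\Z$. The paper simply cites Borel for this last step where you sketch the Serre spectral sequence argument, and the paper phrases the reduction through Lemma~\ref{lem:forget-connect} and \eqref{Eqn:DeligneTopological} rather than invoking Theorem~\ref{theorem:MainMultiplicativity} directly, but the content is the same.
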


\begin{proof}
From Lemma~\ref{lem:forget-connect} we deduce
\[
H^4(B\GL(n,\C)_\bullet, \Z(2)_D) \cong H^4(B\GL(n,\C);\Z)=\Z c_1^2 \oplus \Z c_2.
\]
The preimages $c_1^B \in H^2(BG_\bullet;\Z(1)_D), c_2^B \in H^4(BG_\bullet;\Z(2)_D)$ in Deligne cohomology of these classes are the \emph{Be{\u\i}linson--Chern classes}. On the other hand, by \eqref{Eqn:DeligneTopological}
\[
H^3(G;\Z(2)_D) \cong H^3(G;\Z) = \Z.
\]
The topological transgression map takes $c_2$ to the generator of $\Z$ (see~\cite{MR0051508}). It follows that every class $[\scrG] \in H^3(G;\Z(2)_D)$ is in the image of $\tau$. 
\end{proof}

\section{Holomorphic 2-Gerbes}\label{sec:HigherGerbes}

\subsection{2-Gerbes}\label{ssec:2gerbe}
Our definition of $2$-gerbe is weaker than that in~\cite{MR2032513}, where it is demanded that the multiplication functor $M$ be strictly associative. We only require associativity up to coherence transformations.
% but one may always pass to an equivalent strict one.

\begin{definition}
A \emph{holomorphic $2$-gerbe} $\mathfrak{G}=(\rho, \scrG, M, \alpha)$ on $X$ consists of
\begin{enumerate}
\item
A holomorphic submersion $\rho\colon V\rightarrow X$.
\item
A holomorphic gerbe $\scrG$ on $V^{[2]}$.
\item
A morphism of gerbes $M\colon \pr_{12}^*\scrG \otimes \pr_{23}^*\scrG \rightarrow \pr_{13}^*\scrG$ over $V^{[3]}$.
\item
An associativity transformation $\alpha\colon M\circ(M\otimes\id)\Rightarrow M\circ(\id\otimes M)$ between the two composite morphisms
\[
\pr_{12}^*\scrG\otimes\pr_{23}^*\scrG \otimes \pr_{34}^*\scrG \rightarrow \pr_{14}^*\scrG
\]
\end{enumerate}
The transformations $\alpha$ are required to fit into the usual commutative pentagon \textup{\cite[p.~47]{MR2610397}}. In detail, this means the commutativity of diagram~\eqref{pentagon} below.
\end{definition}

\begin{remark}
Let us unwind this definition and see that is entails an $X$-indexed family of $2$-categories.
The elements $v \in \rho^{-1}(x)$ are the \emph{objects} at $x\in X$. The gerbe $\scrG$ contains a submersion $\pi=(\pi_1,\pi_2)\colon Y \rightarrow V^{[2]}$ whose fibers $y\in \pi^{-1}(v_1,v_2)$ are the \emph{1-arrows} from $v_1$ to $v_2$. Given two $1$-arrows $y_1, y_2$ we have a complex line $L_{y_1,y_2}$ of {$2$-arrows}. The multiplication in the gerbe $\scrG$ gives a strictly associative \emph{vertical composition} of $2$-arrows.
%%
%% MORE EXPLANATION ON THE DOMAINS OF THE SUBMERSIONS
%The submersion of the gerbe $\pr_{12}^*\scrG\otimes \pr_{23}^*\scrG$ may be identified with %on the set of composable arrows
%\[
%	Y{}_{\pi_2}\!\times_{\pi_1} Y \rightarrow V^{[3]},\quad (y_1,y_2) \mapsto \left(\pi_1(y_1), \pi_2(y_1)=\pi_1(y_2), \pi_2(y_2)\right)
%\]
%and the fiber of the line bundle over $(y_1,y_2), (\tilde{y}_1,\tilde{y}_2)$ is $L_{y_1,\tilde{y}_1} \otimes L_{y_2, \tilde{y}_2}$. %both y_1,y_2 and \tilde{y}_1, \tilde{y}_2 are composable, with the same middle vertex, and have the same source and target
%The gerbe $\pr_{13}^*\scrG$ has submersion $Y\times_X V \rightarrow V^{[3]}, (y,v)\mapsto (\pi_1(y),v,\pi_2(y))$ and the fiber of the line bundle over $((y_1,v),(y_2,v))$ is $L_{y_1,y_2}$. % Here y_1 and y_2 have the same source and target
Horizontal composition is encoded in the morphism $M$. It includes a submersion $Z \rightarrow {(Y{}_{\pi_2}\!\times_{\pi_1} Y)\times_{V^{[2]}} Y}$ onto the set of composable triangles of $1$-arrows.
% depicted as follows (where $v_\alpha,v_\beta,v_\gamma \in V$ denote objects):
%\begin{equation*}
%\begin{tikzpicture}[baseline=(current  bounding  box.center), decoration={
%    markings,
%    mark=at position 0.5 with {\arrow{stealth}}
%    }]
%\draw[font=\scriptsize,postaction={decorate}] (0,0) -- node [left] {$y_{\alpha\beta}$} (1,1.45);
%\draw[font=\scriptsize,postaction={decorate}] (1,1.45) -- node [right] {$y_{\beta\gamma}$} (2,0);
%\draw[font=\scriptsize,postaction={decorate}] (0,0) -- node [below] {$y_{\alpha\gamma}$} (2,0);
%\path (1,0.6) node {$z$};
%\filldraw (0,0) circle (1pt) node[left] {$v_\alpha$};
%\filldraw (2,0) circle (1pt) node[right] {$v_\gamma$};
%\filldraw (1,1.45) circle (1pt) node[above] {$v_\beta$};
%\end{tikzpicture}
%\end{equation*}
We regard $z\in Z$ mapping to $(y_{\alpha\beta}, y_{\beta\gamma}, y_{\alpha\gamma})$ as a \emph{filler} of this triangle and then think of $y_{\alpha\gamma}$ as a choice of horizontal composition of $y_{\alpha\beta}$ and $y_{\beta\gamma}$. In particular, it is not unique. Given two filled triangles $z\mapsto(y_{\alpha\beta}, y_{\beta\gamma}, y_{\alpha\gamma})$ and $\tilde{z}\mapsto (\tilde{y}_{\alpha\beta},\tilde{y}_{\beta\gamma}, \tilde{y}_{\alpha\gamma})$ on the same vertices,  $M$ provides us with isomorphisms
\[
	L_{y_{\alpha\beta},\tilde{y}_{\alpha\beta}}\otimes L_{y_{\beta\gamma},\tilde{y}_{\beta\gamma}} \otimes R_{\tilde{z}} \rightarrow
	R_{z} \otimes L_{y_{\alpha\gamma},\tilde{y}_{\alpha\gamma}}.
\]%Pictorially, we are choosing ways to fill up the prism
This is the horizontal composition of $2$-arrows.
%These may be regarded as the horizontal composition of $2$-arrows.
The morphism $\alpha$ includes a submersion $W\rightarrow{(Z\times_Y Z) \times_{Y^{[4]}} (Z\times_Y Z)}$ to fillers $(z_{\beta\gamma\delta},z_{\alpha\gamma\delta},z_{\alpha\beta\delta},z_{\alpha\beta\gamma})$ of %rectangular composition
diagrams:
\begin{equation*}
\begin{tikzpicture}[decoration={
    markings,
    mark=at position 0.5 with {\arrow{stealth}}
    }]
\draw (-1,0);    
\draw[->] (4,2) node[right] {$z_{\alpha\beta\gamma}$} .. controls (3.5,2.25) and (3,2.25) .. (2.5,2);
\filldraw[white,opacity=0.5] (0,0) -- (4,0) -- (2,3.46) -- (0,0);
\draw[font=\scriptsize,postaction={decorate}] (0,0) -- node[below]{$y_{\alpha\beta}$} (4,0);
\draw[font=\scriptsize,postaction={decorate}] (4,0) -- node[right]{$y_{\beta\gamma}$} (2,3.46);
\draw[font=\scriptsize,postaction={decorate}] (0,0) -- node[left]{$y_{\alpha\gamma}$} (2,3.46);
\draw[font=\scriptsize,postaction={decorate}] (0,0) -- node[below]{$y_{\alpha\delta}$} (2,1.3);
\draw[font=\scriptsize,postaction={decorate}] (2,3.46) -- node[right]{$y_{\gamma\delta}$} (2,1.3);
\draw[font=\scriptsize,postaction={decorate}] (4,0) -- node[below]{$y_{\beta\delta}$} (2,1.3);
\path (2,0.5) node{$z_{\alpha\beta\delta}$};
\path (1.4,1.5) node{$z_{\alpha\gamma\delta}$};
\path (2.6,1.5) node{$z_{\beta\gamma\delta}$};

\filldraw (0,0) circle (1pt) node[left] {$v_\alpha$};
\filldraw (4,0) circle (1pt) node[right] {$v_\beta$};
\filldraw (2,3.46) circle (1pt) node[above] {$v_\gamma$};
\filldraw (2,1.3) circle (1pt) node[below] {$v_\delta$};

\end{tikzpicture}
\end{equation*}
%\begin{equation*}
%\begin{tikzpicture}[decoration={
%    markings,
%    mark=at position 0.5 with {\arrow{stealth}}
%    }]
%\begin{scope}
%\draw[font=\scriptsize,postaction={decorate}] (0,0) -- node [left] {$y_{1}$} (0,2);
%\draw[font=\scriptsize,postaction={decorate}] (0,2) -- node [above] {$y_{2}$} (2,2);
%\draw[font=\scriptsize,postaction={decorate}] (2,2) -- node [right] {$y_{3}$} (2,0);
%\draw[font=\scriptsize,postaction={decorate}] (0,0) -- node [below] {$y$} (2,0);
%
%\draw[font=\scriptsize,postaction={decorate}] (0,0) -- node [below] {$y_{12}$} (2,2);
%
%\path (0.75,1.5) node{$z_{1,2}$};
%\path (1.55,0.5) node{$z_{12,3}$};
%
%\filldraw (0,0) circle (1pt);
%\filldraw (2,0) circle (1pt);
%\filldraw (2,2) circle (1pt);
%\filldraw (0,2) circle (1pt);
%\end{scope}
%
%\begin{scope}[xshift=4cm]
%\draw[font=\scriptsize,postaction={decorate}] (0,0) -- node [left] {$y_{1}$} (0,2);
%\draw[font=\scriptsize,postaction={decorate}] (0,2) -- node [above] {$y_{2}$} (2,2);
%\draw[font=\scriptsize,postaction={decorate}] (2,2) -- node [right] {$y_{3}$} (2,0);
%\draw[font=\scriptsize,postaction={decorate}] (0,0) -- node [below] {$y$} (2,0);
%
%\draw[font=\scriptsize,postaction={decorate}] (0,2) -- node [below] {$y_{23}$} (2,0);
%
%\path (1.55,1.5) node{$z_{2,3}$};
%\path (0.75,0.5) node{$z_{1,23}$};
%
%\filldraw (0,0) circle (1pt);
%\filldraw (2,0) circle (1pt);
%\filldraw (2,2) circle (1pt);
%\filldraw (0,2) circle (1pt);
%\end{scope}
%\end{tikzpicture}
%\end{equation*}
We regard a preimage $w_{\alpha\beta\gamma\delta}$ as a filler of this tetrahedron. $\alpha$ gives an isomorphism
\begin{equation}\label{assoc-iso}
	\psi_{w_{\alpha\beta\gamma\delta}}\colon	R_{z_{\alpha\beta\delta}}\otimes R_{z_{\beta\gamma\delta}} \rightarrow R_{z_{\alpha\gamma\delta}} \otimes R_{z_{\alpha\beta\gamma}}.
\end{equation}
These mediate between the two ways to horizontally compose three $2$-arrows. % $y_1 \Rightarrow \tilde{y}_1$, $y_2 \Rightarrow \tilde{y}_2$, $y_3 \Rightarrow \tilde{y}_3$.% (with fillers for the horizontal compositions of $1$-arrows given by $w$ and $\tilde{w}$). \psi_w, \psi_{\tilde{w}} are the sides of a commutative rectangle
The commutative pentagon mentioned above is then expressed as follows. Let $\alpha,\beta,\gamma,\delta,\varepsilon$ be objects with arrows $y_{\alpha\beta},\ldots, y_{\delta\varepsilon}$ between them, let $z_{\alpha\beta\gamma},\ldots, z_{\gamma\delta\varepsilon}$ be fillers of all triangles, and let $w_{\beta\gamma\delta\varepsilon}, w_{\alpha\gamma\delta\varepsilon}, w_{\alpha\beta\delta\varepsilon}, w_{\alpha\beta\gamma\varepsilon}, w_{\alpha\beta\gamma\delta}$ be  fillers of the resulting tetrahedra. For every such data, we have a commutative diagram:
\begin{equation}\label{pentagon}
\begin{tikzpicture}[baseline=(current  bounding  box.center)]
\matrix (m) [matrix of math nodes, column sep=-1em, row sep=2em]
{
	& R_{\alpha\beta\gamma}\otimes R_{\alpha\gamma\delta}\otimes R_{\alpha\delta\varepsilon} &\\
	R_{\beta\gamma\delta}\otimes R_{\alpha\beta\delta} \otimes R_{\alpha\delta\varepsilon}&&R_{\alpha\beta\gamma}\otimes R_{\gamma\delta\varepsilon}\otimes R_{\alpha\gamma\varepsilon}\\
	R_{\beta\gamma\delta}\otimes R_{\beta\delta\varepsilon}\otimes R_{\alpha\beta\varepsilon}&& R_{\beta\gamma\varepsilon}\otimes R_{\gamma\delta\varepsilon}\otimes R_{\alpha\beta\varepsilon}\\
};
\draw[->, font=\scriptsize] (m-1-2) -- node [right] {$\;\;\id\otimes\psi_{\alpha\gamma\delta\varepsilon}$} (m-2-3);
\draw[->, font=\scriptsize] (m-2-3) -- node [right] {$\id\otimes\psi_{\alpha\beta\gamma\varepsilon}$} (m-3-3);
\draw[->, font=\scriptsize] (m-3-1) -- node [above] {$\psi_{\beta\gamma\delta\varepsilon}\otimes\id$} (m-3-3);
\draw[->, font=\scriptsize] (m-1-2) -- node [left] {$\psi_{\alpha\beta\gamma\delta}\otimes\id\;\;$} (m-2-1);
\draw[->, font=\scriptsize] (m-2-1) -- node [left] {$\id\otimes\psi_{\alpha\beta\delta\varepsilon}$} (m-3-1);
\end{tikzpicture}
\end{equation}
\end{remark}

\subsection{The Dixmier--Douady Class}\label{ssec:DD2gerbe}

Let $\mathfrak{G}=(\rho,\scrG,M,\alpha)$ be a $2$-gerbe on $X$. By Lemma~\ref{lem:IteratedCover} we find  open covers $\{U_\alpha\}$ of $X$ with holomorphic sections as follows: %arbitrarily fine
\begin{enumerate}
\item
$v_\alpha\colon U_\alpha \rightarrow V$ of $\rho$.
\item
$y_{\alpha\beta} \colon U_{\alpha\beta} \rightarrow Y$ of the pullback of $\pi$ along $(v_\alpha,v_\beta)\colon U_{\alpha\beta}\rightarrow V^{[2]}$.
\item
$z_{\alpha\beta\gamma}\colon U_{\alpha\beta\gamma}\rightarrow Z$ of the pullback of $Z\rightarrow (Y\times_V Y)\times_{V^{[2]}} Y$ along $(y_{\alpha\beta},y_{\beta\gamma},y_{\alpha\gamma})$.
\item
$w_{\alpha\beta\gamma\delta}\colon U_{\alpha\beta\gamma\delta} \rightarrow W$ of the pullback of ${W\rightarrow (Z\times_Y Z) \times_{Y^{[4]}} (Z\times_Y Z)}$ along\\$(z_{\beta\gamma\delta},z_{\alpha\beta\delta},z_{\alpha\beta\gamma},z_{\alpha\gamma\delta})\colon U_{\alpha\beta\gamma\delta} \rightarrow Z^4$.
\item
Trivializations of $z_{\alpha\beta\gamma}^*R$
\end{enumerate}

Since both sides are trivialized, \eqref{assoc-iso} is just a holomorphic function $g_{\alpha\beta\gamma\delta} \in \O^*(U_{\alpha\beta\gamma\delta})$.% These form a \v{C}ech cocycle, by \eqref{pentagon}.

\begin{definition}
The \emph{Dixmier--Douady class} of $\mathfrak{G}$ is
$[(g_{\alpha\beta\gamma\delta})]=\mathrm{DD}(\mathfrak{G}) \in \check{H}^4(X,\Z(1)_D)$.
\end{definition}

\begin{definition}
Let $\mathfrak{G}=(\rho, \scrG, M, \alpha)$ be a holomorphic $2$-gerbe. A \emph{connection} on $\mathfrak{G}$ consists of a connection $\nabla^L$ on the gerbe $\scrG$ and a connection $\nabla^R$ on the morphism $M$. The transformation $\alpha$ is required to be compatible with the connections.
\end{definition}

Write the connection on $z_{\alpha\beta\gamma}^*R$ as $d+A_{\alpha\beta\gamma}$. Since \eqref{assoc-iso} preserves connections,
%\[
%	d\log g_{\alpha\beta\gamma\delta} = A_{\alpha\beta\delta} + A_{\beta\gamma\delta} - A_{\alpha\gamma\delta} - A_{\alpha\beta\gamma}.
%\]
%Thus
$(g_{\alpha\beta\gamma\delta},A_{\alpha\beta\gamma})$ refines the Dixmier-Douady class to $H^4(X, \Z(2)_D)$.

%The curvature $\rho \in \Omega^1(Z)$ on $M$ is related to the curvature of $R$ via
%\[
%	d (\rho_{\alpha\beta\gamma}+ A_{\alpha\beta\gamma}) = f_{\alpha\beta} + f_{\beta\gamma} - f_{\alpha\gamma}
%\]
%where $\rho_{\alpha\beta\gamma}=(v_\alpha,v_\beta,v_\gamma)^*\rho$. Since $\alpha$ preserves the connection
%\[
%	\rho_{\alpha\beta\delta} + \rho_{\beta\gamma\delta} - \rho_{\alpha\gamma\delta} - \rho_{\alpha\beta\gamma}=0.
%\]
%It follows that $(g_{\alpha\beta\gamma\delta},A_{\alpha\beta\gamma} + \rho_{\alpha\beta\gamma}, f_{\alpha\beta})$ defines a class in \v{C}ech cohomology $H^4(X, \Z(3)_D)$. To get a class in $H^4(X, \Z(4)_D)$, one needs in addition a form $\omega \in \Omega^3(V)$ with $\pr_2^*\omega - \pr_1^*\omega = R(\scrG)$. The differential $d\omega$ is the pullback of the curvature $4$-form $R(\mathfrak{G}) \in \Omega^4(X)$. This form represents the image of $\mathrm{DD}(\mathfrak{G})$ in $H^4(X;\C)$.

\subsection{The Canonical $2$-Gerbe}\label{ssec:canonical2gerbe}

For a holomorphic vector bundle $E$ over an algebraic manifold $X$ we have the Be{\u\i}linson--Chern classes (see~\cite{MR760999,MR1667678,MR944991})
\[
	c_p^B(E) \in H^{2p}(X, \Z(p)_D).
\]
By \cite[Proposition~8.2]{MR944991} these are characterized by functoriality and by the requirement that they map to the  Chern classes via $H^{2p}(X,\Z(p)_D) \rightarrow H^{2p}(X;\Z)$. The construction of such classes is based on Deligne's theory of mixed Hodge structures \cite{MR0498552}. Our Lemma~\ref{lem:forget-connect} above shows that for $p\leq 2$ one may also deduce the existence of these classes from the theory of Stein spaces.

Recall from Theorem~\ref{thm:HolomorphicGerbe} the canonical gerbe $\scrG^\can$ on $\GL(n,\C)$. Using Corollary~\ref{GLNcorollary} we equip this gerbe with the multiplicative structure with multiplicative class $c_2$.

\begin{definition}
Let $E\rightarrow X$ be a holomorphic vector bundle. Its \emph{associated $2$-gerbe} $\mathfrak{G}(E)$ is defined by the following data:
\begin{enumerate}
\item
As submersion $\rho$ we take the principal bundle of frames ${P_\GL(E) \rightarrow X}$.
\item
The gerbe $\mathcal{G}(E) = \delta^*\mathcal{G}^\can$ with ${\delta\colon P_\GL^{[2]}\rightarrow \GL(n,\C)}$ given by ${p \delta(p,q)=q}$.
\item
$M$ and $\alpha$ are pulled back from the multiplicative structure on $\scrG^\can$.
\end{enumerate}
For $E=T^*X$ we call $\mathfrak{G}(T^*X)$ the \emph{canonical $2$-gerbe} of the complex manifold.
\end{definition}

%(more generally, for any multiplicative gerbe on $G$ and $G$-principal bundle ${P\rightarrow X}$ this construction yields a $2$-gerbe on $X$.)

\begin{proof}[Proof of Theorem~\textup{\ref{thm:Canonical2Gerbe}}]
Functoriality is obvious. By the definition of the topological Dixmier--Douady class of a $2$-gerbe one sees easily that it is the pullback of the topological multiplicative class $\lambda(\scrG^\can) = c_2 \in H^4(B\GL(n,\C);\Z)$ under the classifying map $X\rightarrow B\GL(n,\C)$. Alternatively, one may appeal to the smooth case \cite{MR2610397}. This proves (2). Now (3) follows from (1), (2) and \cite[Proposition~8.2]{MR944991}.
\end{proof}

\section{An Application and Further Outlook}\label{sec:appl}

The following statement makes essential use of the holomorphic structure on $\scrG^\can$ and the integrability of the complex structure on $X$.

\begin{theorem}\label{appl1}
Let $X$ be a closed complex $6$-manifold with ${b_3=0}$, ${c_2(X)=0}$ \textup(e.g.~when $H^4(X;\Z)=0$\textup). Then the canonical $2$-gerbe ${\mathfrak{G}(T^*X)}$ of $X$ is trivial.
\end{theorem}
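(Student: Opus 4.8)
The plan is to show that the holomorphic Dixmier--Douady class of $\mathfrak{G}(T^*X)$, suitably refined by the connective structure the $2$-gerbe inherits, vanishes in Deligne cohomology, and then to read off triviality from this vanishing. First I would record that, since $c_2(E^*)=c_2(E)$, part~(2) of Theorem~\ref{thm:Canonical2Gerbe} gives $\mathrm{DD}^{\mathrm{top}}(\mathfrak{G}(T^*X))=c_2(T^*X)=c_2(X)=0$, so the topological obstruction already vanishes. Moreover, the multiplicative structure on $\scrG^\can$ carries a connection---its multiplicative class lives in $H^4(B\GL(n,\C)_\bullet,\Z(2)_D)$---and $\mathfrak{G}(T^*X)$ is obtained by pulling this structure back along the frame bundle $P_\GL(T^*X)$. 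Hence the canonical $2$-gerbe comes with a \emph{holomorphic} connection and its class refines to $[\mathfrak{G}(T^*X)]\in H^4(X,\Z(3)_D)$ once we also produce a curving.

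The decisive step is upgrading the connection to a holomorphic curving. As in \ref{ssec:atiyah}, the short exact sequence $0\to\Omega^2[-3]\to\Z(3)_D\to\Z(2)_D\to 0$ yields an obstruction in $H^2(X,\Omega^2)=H^{2,2}$, namely the image of the connection class $[\mathfrak{G}(T^*X)]\in H^4(X,\Z(2)_D)$ under the connecting map. By the same Hodge-theoretic mechanism by which $B(\scrG)$ and $\mathrm{DD}(\scrG)$ have equal image in $H^\bullet(X;\C)$ (see~\ref{ssec:atiyah}), this obstruction maps, under the comparison with $H^4(X;\C)$, to a Hodge component of the image of $c_2(X)$. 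Since $c_2(X)=0$ this component vanishes, so a holomorphic curving exists and the class refines to $[\mathfrak{G}(T^*X)]\in H^4(X,\Z(3)_D)$. It is exactly here that both the vanishing of $c_2$ and the genuinely \emph{holomorphic} nature of $\scrG^\can$ enter, as announced before the statement.

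To conclude, I would run the exponential sequence induced by $0\to\Omega^{<3}[-1]\to\Z(3)_D\to\Z\to 0$,
\[
	H^3(X,\Omega^{<3}) \xrightarrow{\;j\;} H^4(X,\Z(3)_D) \xrightarrow{\;c\;} H^4(X,\Z).
\]
Since $c([\mathfrak{G}(T^*X)])=c_2(X)=0$, the class lies in the image of $j$. I then compute $H^3(X,\Omega^{<3})$ from the distinguished triangle $\Omega^{<3}\to\Omega^\bullet\to\tau_{\ge 3}\Omega^\bullet$ attached to the holomorphic de Rham complex, whose hypercohomology is $H^\bullet(X;\C)$: the complex $\tau_{\ge 3}\Omega^\bullet$ is concentrated in degrees $\ge 3$ and contributes nothing in total degree $2$, while $H^3(X;\C)=0$ because $b_3=0$ and $X$ is compact. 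Hence $H^3(X,\Omega^{<3})=0$, the image of $j$ is trivial, and $[\mathfrak{G}(T^*X)]=0$. Mapping down the tower $H^4(X,\Z(3)_D)\to H^4(X,\Z(1)_D)=H^3(X,\O^*)$ sends this to the bare holomorphic Dixmier--Douady class, which is therefore also zero; a vanishing Dixmier--Douady class corresponds to the trivial holomorphic $2$-gerbe, so $\mathfrak{G}(T^*X)$ is trivial.

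The main obstacle I expect is the middle step: pinning down the curving obstruction precisely and matching it with a Hodge component of $c_2(X)$, which requires tracking the connective data of the multiplicative structure through the pullback to $P_\GL(T^*X)$ and checking that the relevant connecting homomorphism is computed by the curvature exactly as in the line-bundle case of \ref{ssec:atiyah}. By comparison, the topological computation of $\mathrm{DD}^{\mathrm{top}}$, the vanishing $H^3(X,\Omega^{<3})=0$, and the passage from a zero Deligne class to an actual trivialization are routine once the formalism of \ref{ssec:DD} and \ref{ssec:atiyah} is in place.
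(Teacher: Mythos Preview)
Your argument has a genuine gap at the curving step, and the detour through $\Z(3)_D$ is in fact unnecessary. The obstruction to lifting the class from $H^4(X,\Z(2)_D)$ to $H^4(X,\Z(3)_D)$ lives in $H^2(X,\Omega^2)$, and you argue it vanishes because its image in $H^4(X;\C)$ is a ``Hodge component'' of $c_2(X)=0$. But $X$ is only assumed to be a closed complex manifold, not K\"ahler: there is no Hodge decomposition of $H^4(X;\C)$, and any comparison map $H^2(X,\Omega^2)\to H^4(X;\C)$ one might extract from the Fr\"olicher spectral sequence is not injective in general. Knowing that the \emph{image} of the obstruction vanishes does not force the obstruction itself to vanish. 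You correctly flag this as the main obstacle, but the mechanism you invoke---the analogue of the proposition in \S\ref{ssec:atiyah} comparing the Atiyah-type class with $\mathrm{DD}(\scrG)$---only yields equality of images in complex cohomology, which is too weak without a K\"ahler hypothesis. Note that Corollary~\ref{cor-kaehler} is stated only for K\"ahler $X$ for exactly this reason.

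The paper avoids this entirely by working directly at the $\Z(1)_D$ level, where the bare Dixmier--Douady class $\mathrm{DD}(\mathfrak{G}(T^*X))\in H^3(X,\O^*)$ already lives; no connection or curving is needed. The exponential sequence reads
\[
H^3(X,\O)\longrightarrow H^3(X,\O^*)\longrightarrow H^4(X;\Z),
\]
and the image on the right is $c_2(X)=0$, so it suffices to prove $H^3(X,\O)=0$. By Serre duality on the compact complex $3$-fold this is dual to $H^0(X,\Omega^3)$, and the crucial input is that the natural map $H^0(X,\Omega^3)\to H^3_{dR}(X)$ is \emph{injective}: if a holomorphic top-form satisfies $\omega=d\eta$, then $\int_X \omega\wedge\bar\omega=\int_X d(\eta\wedge\bar\omega)=0$, while locally $\omega\wedge\bar\omega$ is $|f|^2\,d\mathrm{vol}$ up to a nonzero constant, forcing $\omega=0$. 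Since $b_3=0$ gives $H^3_{dR}(X)=0$, this yields $H^0(X,\Omega^3)=0$ and hence $H^3(X,\O)=0$. This positivity argument is where the integrability of the complex structure actually enters, replacing your problematic lift to $\Z(3)_D$.
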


\begin{proof}
We consider the long exact sequence in sheaf cohomology
\[
	\cdots \rightarrow H^3(X;\Z) \rightarrow H^3(X,\O) \rightarrow H^3(X, \O^*) \rightarrow H^4(X;\Z) \rightarrow \cdots
\]
By definition, $H^3(X,\O^*)=H^4(X,\Z(1)_D)$. Moreover, $H^3(X,\O)=H^{0,3}(X)$ which by Serre duality may be identified with $H^{0,3}(X)=H^0(X, \Lambda^3(X))$, the holomorphic sections of the canonical bundle of $X$. Observe that the linear map
\[
	H^{3}(X, \Lambda^3(X)) \rightarrow H^3_{dR}(X)
\]
is injective (it is well-defined since $\partial \omega = 0$ by reasons of degree and $\bar\partial \omega=0$ since $\omega$ is holomorphic). Indeed, suppose $\omega = d \eta$ is a holomorphic $3$-form that bounds. In a chart neighborhood $(z^1,z^2,z^3)$ we may write $\omega = f dz^1dz^2dz^3$. Since $M$ is closed,
\[
	\int_M |f|^2 d\mathrm{vol}=\int_M \omega\wedge \bar\omega = \int_M d(\eta \wedge \bar\omega) = \int_{\partial M} \eta \wedge \bar\omega = 0.
\]
This implies $f=0$ on an arbitrary chart neighborhood, so $\omega=0$.
Now our assumptions imply $H^3(X,\O)=0$, so that by the exact sequence $H^3(X,\O^*)$ is mapped injectively into $H^4(X;\Z)$. Since the canonical $2$-gerbe is mapped to $c_2(X)$, the claim follows.
\end{proof}

The triviality of the canonical line bundle has the topological consequence that the middle Betti number is non-zero. We ask whether there is a similar topological obstructions against the triviality of the canonical $2$-gerbe.

%Our second application of the canonical $2$-gerbe makes no use of its holomorphic structure. We show that it may be used to construct canonical gerbes for arbitrary compact Lie groups. This subsumes many previous results \cite{MR2026898,MR2078218,MR1945806,Brylinski:aa} for specific types of Lie groups.

\bibliographystyle{plain}

\end{document}